\documentclass[11pt,reqno]{amsart}
\usepackage{amsmath,epsfig,graphicx,color}
\usepackage{subcaption}
\usepackage{float}
\usepackage{dsfont}
\usepackage{ifthen}
\usepackage{amssymb,latexsym}
\usepackage{makecell}
\usepackage{hyperref}

\definecolor{bordeau}{rgb}{0.5,0,0}
\definecolor{pslblue}{RGB}{36, 56, 141}
\hypersetup{
  urlcolor=black, 
  menucolor=black, 
  citecolor=bordeau, 
  anchorcolor=black, 
  filecolor=black, 
  linkcolor=pslblue, 
  colorlinks=true,
}

\usepackage[numbers,sort]{natbib} 
\newcommand{\cip}{\stackrel{\P}{\rightarrow}}
\newcommand{\eid}{\stackrel{\rm d}{=}}
\newcommand{\dint}{\,\mathrm{d}}

\newcommand{\y}{\mathbf{y}}
\newcommand{\X}{\mathbf{X}}
\newcommand{\D}{\mathbb{D}}
\newcommand{\Tr}{\operatorname{Tr}}

\usepackage{mathtools}
\mathtoolsset{showonlyrefs}

\usepackage{multirow}

\definecolor{darkblue}{rgb}{.2, 0.2,.8}
\definecolor{darkgreen}{rgb}{0,0.5,0.3}
\definecolor{darkred}{rgb}{.8, .1,.1}

\newtheorem{lemma}{Lemma}[section]
\newtheorem{theorem}[lemma]{Theorem}
\newtheorem{proposition}[lemma]{Proposition}
\theoremstyle{definition}
\newtheorem{definition}[lemma]{Definition}
\newtheorem{corollary}[lemma]{Corollary}
\newtheorem{example}[lemma]{Example}
\newtheorem{remark}[lemma]{Remark}

\newcommand{\cid}{\stackrel{\rm d}{\rightarrow}}

\newcommand{\sign}{{\rm sign}}

\newcommand{\as}{{\rm a.s.}}

\newcommand{\diag}{{\rm diag}}
\newcommand{\rhs}{right-hand side}

\newcommand{\N}{\mathbb{N}}
\newcommand{\R}{\mathbb{R}}

\newcommand{\nto}{n\to\infty}

\newcommand{\vep}{\varepsilon}

\newcommand{\bfI}{{\bf I}}

\newcommand{\E }{{\mathbb E}}
\renewcommand{\P }{{\mathbb P}}

\newcommand{\1}{\mathds{1}}

\allowdisplaybreaks

\DeclareMathOperator{\e}{e}
\newcommand{\Y}{{\mathbf Y}}
\newcommand{\DAN}{{\mathrm{DAN}}}
\newcommand{\C}{\mathbb{C}}

\renewcommand{\geq}{\geqslant}
\renewcommand{\leq}{\leqslant}
\renewcommand{\ge}{\geqslant}
\renewcommand{\le}{\leqslant}
\newcommand{\Prob}{\P}

\renewcommand{\bar}{\widebar}

\makeatletter
\let\save@mathaccent\mathaccent
\newcommand*\if@single[3]{%
  \setbox0\hbox{${\mathaccent"0362{#1}}^H$}%
  \setbox2\hbox{${\mathaccent"0362{\kern0pt#1}}^H$}%
  \ifdim\ht0=\ht2 #3\else #2\fi
  }
\newcommand*\rel@kern[1]{\kern#1\dimexpr\macc@kerna}
\newcommand*\widebar[1]{\@ifnextchar^{{\wide@bar{#1}{0}}}{\wide@bar{#1}{1}}}
\newcommand*\wide@bar[2]{\if@single{#1}{\wide@bar@{#1}{#2}{1}}{\wide@bar@{#1}{#2}{2}}}
\newcommand*\wide@bar@[3]{%
  \begingroup
  \def\mathaccent##1##2{%
    \let\mathaccent\save@mathaccent
    \if#32 \let\macc@nucleus\first@char \fi
    \setbox\z@\hbox{$\macc@style{\macc@nucleus}_{}$}%
    \setbox\tw@\hbox{$\macc@style{\macc@nucleus}{}_{}$}%
    \dimen@\wd\tw@
    \advance\dimen@-\wd\z@
    \divide\dimen@ 3
    \@tempdima\wd\tw@
    \advance\@tempdima-\scriptspace
    \divide\@tempdima 10
    \advance\dimen@-\@tempdima
    \ifdim\dimen@>\z@ \dimen@0pt\fi
    \rel@kern{0.6}\kern-\dimen@
    \if#31
      \overline{\rel@kern{-0.6}\kern\dimen@\macc@nucleus\rel@kern{0.4}\kern\dimen@}%
      \advance\dimen@0.4\dimexpr\macc@kerna
      \let\final@kern#2%
      \ifdim\dimen@<\z@ \let\final@kern1\fi
      \if\final@kern1 \kern-\dimen@\fi
    \else
      \overline{\rel@kern{-0.6}\kern\dimen@#1}%
    \fi
  }%
  \macc@depth\@ne
  \let\math@bgroup\@empty \let\math@egroup\macc@set@skewchar
  \mathsurround\z@ \frozen@everymath{\mathgroup\macc@group\relax}%
  \macc@set@skewchar\relax
  \let\mathaccentV\macc@nested@a
  \if#31
    \macc@nested@a\relax111{#1}%
  \else
    \def\gobble@till@marker##1\endmarker{}%
    \futurelet\first@char\gobble@till@marker#1\endmarker
    \ifcat\noexpand\first@char A\else
      \def\first@char{}%
    \fi
    \macc@nested@a\relax111{\first@char}%
  \fi
  \endgroup
}
\makeatother

\begin{document}
\bibliographystyle{acm}
\title[characteristic polynomial of self-normalized random matrices]{characteristic polynomial of self-normalized random matrices}
\thanks{Johannes Heiny's and Xuechun Hu's research was partially supported by the Swedish Research Council via VR grant VR-2023-03577 ``High-dimensional extremes and random matrix structures''.}

\author[Q. François]{Quentin François}
\address{Univ. Lille, CNRS UMR 9189 – CRIStAL, 59651 Villeneuve d’Ascq, France}
\email{quentin.francois@univ-lille.fr}
\author[J. Heiny]{Johannes Heiny}
\address{Department of Mathematics, KTH Royal Institute of Technology, Stockholm, Sweden}
\email{heiny@kth.se}
\author[X. Hu]{Xuechun Hu} 
\address{Department of Mathematics, KTH Royal Institute of Technology, Stockholm, Sweden}
\email{xuechunh@kth.se}

\begin{abstract}
We study the characteristic polynomial of self-normalized random matrices, whose rows are independent and normalized to have unit $\mathrm{L}^2$ norm. The entries before normalization are assumed to have regularly varying tails with tail index $\alpha \in [0,2]$. We prove that, outside the unit disk, the characteristic polynomial converges to a random analytic function $F_\alpha$. We identify $F_\alpha$ as a multiplicative chaos described in terms of Poisson point processes. The family of limiting functions $(F_\alpha)_{\alpha\in[0,2]}$ interpolates between two universal regimes: Poisson multiplicative chaos at $\alpha=0$ and Gaussian multiplicative chaos at the boundary $\alpha=2$. Thus, self-normalization provides a matrix model where one observes the transition between Poissonian and Gaussian regimes for the limiting characteristic polynomial as the tail index varies. A similar transition is found for the fluctuations of the traces of self-normalized matrices. As an application of our results, we derive that the spectral radius of self-normalized matrices is asymptotically bounded above by one in probability for any symmetric entry distribution.
\end{abstract}

\keywords{}
\subjclass{Primary 60B20; Secondary 60F05}
\maketitle

\section{Introduction}

The characteristic polynomial of a random matrix naturally encodes its spectral information in the form of a random analytic function. Complementary to the study of the eigenvalue point process, or the spectral measure, the convergence of the characteristic polynomial towards a random limit function can yield further results on the asymptotic spectral behavior of large matrices \cite{Basak_Zeitouni}. For instance, for matrices with independent and identically distributed (i.i.d.)\ entries, \cite{Tao_Vu} established the celebrated circular law which states that the spectral distribution converges towards the uniform measure on the unit disk under a finite second moment condition. Later, the convergence of the spectral radius to one, that is, the absence of outliers, was proved using the convergence of the characteristic polynomial \cite{Bordenave_Chafai_Garcia}. Related results have been obtained for random matrices with a general variance profile and for non-backtracking matrices of the configuration model~\cite{Hachem_Louvaris_2026,Louvaris_Wise_Yehuda_configuration}.

Limits of characteristic polynomials are well described in terms of multiplicative chaos which are random distributions of independent interest in mathematical physics, probability theory and random fields \cite{Bailey_Keating}. The convergence towards multiplicative chaos has been established for a large variety of random matrix models. The Gaussian multiplicative chaos appears as the limiting characteristic polynomial for centered i.i.d.\ matrices under a universal second moment condition \cite{Bordenave_Chafai_Garcia}, for circular $\beta$-ensembles \cite{Najnudel_Paquette_Simm, Chhaibi_Najnudel} and for adjacency matrices of Erdős-Rényi graphs \cite{Coste_Bernoulli} in a certain regime. Its Poissonnian analog, the Poisson multiplicative chaos, appears as the limiting characteristic polynomial for adjacency matrices of directed graphs given by a sum of uniform random permutation matrices \cite{Coste_Lambert_Zhu} and for Ewens random permutations \cite{franccois2025}.

In this paper, we consider the characteristic polynomial of \textit{self-normalized} matrices $\Y_n$ of the form 
\begin{equation}\label{def:Y}
    \Y_n= \left\{\diag \left(\X_n\X_n^\top \right) \right\}^{-1/2} \,\X_n\,,
\end{equation}
where $\X_n=(X_{ij})_{i,j\in [n]}$ is the \textit{data matrix} with i.i.d.\ entries and where for a matrix $A$, we denote by $\diag(A)$ the diagonal matrix having the same diagonal as $A$. Thus, the rows of $\Y_n$ are independent and have unit $\mathrm{L}^2$ norm. Self-normalized matrices play an important role in multivariate analysis since the \textit{sample correlation matrix} associated to the data matrix $\X_n$ is given by $\Y_n \Y_n^\top$. Several properties of large sample correlation matrices have recently been investigated: e.g., spectral distributions \cite{elkaroui:2009, doernemann:heiny:2025, heiny:yao:2022, heiny:2022}, linear spectral statistics \cite{jiang:pham:2025uniformity, Gao2017, yin:zheng:zou:2023, heiny:parolya:2024} and largest off-diagonal entries \cite{jiang:pham:2025asymptotic, jiang:pham:2025detecting, jiang:pham:2026largest, heiny:mikosch:yslas:2021}.

$\mathrm{L}^2$ normalization not only has nice geometric and probabilistic features (e.g., the rows of $\Y_n$ are uniformly distributed on the Euclidean unit sphere in the Gaussian case), it also brings statistical advantages since the spectral behavior of $\Y_n$ is more robust than the one of $\X_n$ with respect to heavy-tailed entries and dimension. The spectrum of the $\mathrm{L}^1$ normalized matrices, yielding random Markov matrices, was studied in \cite{Bordenave_Caputo_Chafai_reversible_Markov} for the reversible case and \cite{Bordenave_Caputo_Chafai_Piras} for the non-reversible case under an additional distribution dependent renormalization.
\medskip

Our main result is the convergence of the characteristic polynomial of self-normalized matrices $\Y_n$ when the entry distribution of the data matrix is symmetric and regularly varying with index $\alpha \in [0, 2]$.  We prove that, outside of the unit disk, the characteristic polynomial converges to a random analytic function $F_\alpha$. The limiting random analytic function $F_\alpha$ is a multiplicative chaos with a description in terms of Poisson point processes. Moreover, the family $(F_\alpha)_{\alpha \in [0,2]}$ interpolates between two universal types of limiting characteristic polynomials: Poisson multiplicative chaos at $\alpha=0$ and Gaussian multiplicative chaos at the boundary $\alpha=2$. In this sense, self-normalization provides a natural framework in which the transition between Poissonian and Gaussian spectral fluctuations can be observed as the tail index varies. As a consequence of the convergence result, we derive an upper bound on the spectral radius of self-normalized matrices. 

\subsection*{The model}

For a field $\{X_{ij};i,j\ge 1\}$ of i.i.d., non-degenerated, symmetrically distributed (i.e. $X_{ij} \eid -X_{ij}$) random variables, we construct the \textit{data matrix} $\X_n=(X_{ij})_{1\le i\le p; 1\le j \le n}$ and define the self-normalized matrix $\Y_n$ as in \eqref{def:Y} whose entries depend on $n$ and are given by 
\begin{equation}\label{def:R}
    Y_{ij}=Y_{ij}^{(n)}=\frac{X_{ij}}{\sqrt{X_{i1}^2+\cdots+X_{in}^2}}\,, \qquad i,j \in [n]\,.
\end{equation}
Throughout the paper, we often suppress the dependence on $n$ in our notation. Since $Y_{ij}$ is invariant with respect to a scaling of the $X_{ij}$'s, we will assume without loss of generality that $\E[X_{11}^2]=1$ whenever $\E[X_{11}^2]$ is finite.

In this paper, we will often work with $|X_{11}|$ having a regularly varying tail with index $\alpha\ge 0$, that is 
\begin{equation}\label{eq:regvar}
\P(|X_{11}|>x)=   x^{-\alpha}\, L(x)\,,\qquad x>0\,,
\end{equation}
for a function $L$ that is slowly varying at infinity that is, such that $\lim_{x\to \infty} L(tx)/L(x)=1$ for any $t>0$. Thus, regularly varying distributions possess power-law tails and moments of $|X_{11}|$ of higher order than $\alpha$ are infinite. Typical examples include the Pareto distribution with parameter $\alpha$ and the $t$-distribution with $\alpha$ degrees of freedom. Figure~\ref{fig:eigenvalue-clouds} shows realizations of the eigenvalues of $\Y_n$ for different values of $\alpha$.

\begin{figure}
    \centering
    \includegraphics[width=0.9\linewidth]{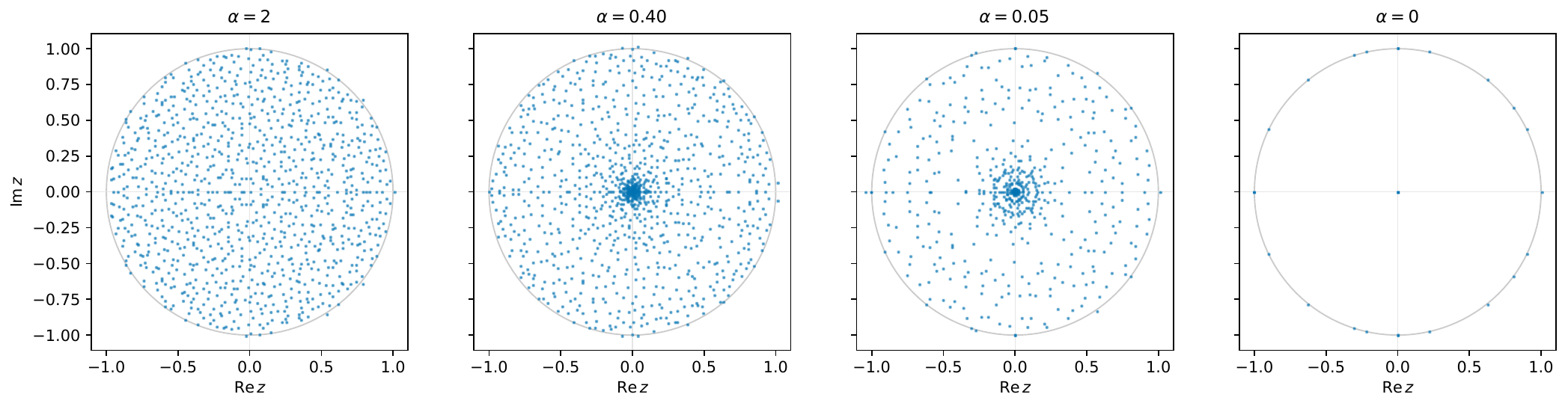}
    \caption{Eigenvalues of $\Y_n$ with $n=1000$.  For $\alpha\in\{2,0.40,0.05\}$, the entries are symmetric Pareto variables satisfying $\P(|X_{ij}|>x)=x^{-\alpha}$ for $x\ge1$. For $\alpha=0$, the entries have the slowly varying tail $\P(|X_{ij}|>x)=(\log x)^{-0.25}$ for $x\ge e$. The gray circle is the unit circle.}
    \label{fig:eigenvalue-clouds}
\end{figure}

\subsection*{Notation}
Convergence in distribution (resp.\ in probability) is denoted by $\cid$ (resp.\ $\cip$), equality in distribution by $\eid$, and unless explicitly stated otherwise all limits are for $\nto$. For sequences $(a_n)_n$ and $(b_n)_n$ we write $a_n=O(b_n)$ if $a_n/b_n\leq C$ for some constant $C>0$ and every $n\in\N$, and $a_n=o(b_n)$ if $\lim_{n\to\infty} a_n/b_n=0$. We write $a_n \lesssim b_n$ if there exists a positive constant $C$ not depending on $n$ such that $a_n \le C\, b_n$ for sufficiently large $n$. We denote by $\D = \left\{ z \in \C : |z| < 1 \right\}$ the open unit disk. For a random variable $X$, we write $X \in \DAN$ if the law of $X$ is in the domain of attraction of the normal distribution. 

\subsection*{Organization of the paper}

Section~\ref{sec:main_results} contains our main results. We introduce the notion of $\alpha$-Poisson families needed for the definition of the limiting random analytic function in Section \ref{subsec:alpha_Poisson} and state our main result on the convergence of the characteristic polynomial of self-normalized matrices in Theorem \ref{th:conv_carac_poly} in Section \ref{subsec:conv_carac_poly}. Theorem \ref{thm:conv_traces} establishes the convergence of traces of self-normalized matrices whose limiting distributions interpolate between Poissonian and Gaussian behaviors. Section \ref{subsec:boundary_results} presents the convergence results in the two boundary cases $\alpha = 0$ and $\alpha \geqslant 2$. The proof of Theorem~\ref{th:conv_carac_poly} is given in Section~\ref{sec:proof_main_thm}, while Section~\ref{sec:conv_traces} contains the proof of Theorem \ref{thm:conv_traces}. The proofs for the boundary cases are presented in Section~\ref{sec:limit_cases}.

\section{Main results}\setcounter{equation}{0}
\label{sec:main_results}

Our main result is the convergence of the characteristic polynomial of the self-normalized matrix~$\Y_n$,
\begin{equation}
\label{eq:char_poly}
    p_n(z) = \det(\bfI - z\Y_n), \qquad z \in \D \ ,
\end{equation}
to a random analytic function. The limiting function is defined using a family of Poisson processes called $\alpha$-Poisson family that is introduced in Section \ref{subsec:alpha_Poisson}. The convergence result is then stated in Section \ref{subsec:conv_carac_poly}. We also provide a non-Gaussian limit theorem for the traces of $\Y_n$. This reveals a stark contrast to the light-tailed case where the limit distribution is Gaussian.

\subsection{$\alpha$-Poisson families}
\label{subsec:alpha_Poisson}

The aim of this subsection is to introduce \textit{$\alpha$-Poisson families} which are essential for the construction of the limit of the characteristic polynomial of $\Y_n$. They will be defined using Poisson processes that we call \textit{$q$-Poisson processes}.

To this end, fix  $\alpha \in (0, 2)$ and let us first recall that if $U \sim \operatorname{Beta} \left(1- \tfrac{\alpha}{2}, \tfrac{\alpha}{2} \right)$ is a Beta random variable, then
\begin{equation}
    \label{eq:moments_beta}
    \E \left[ U^{m-1} \right] = \frac{\Gamma(m-\alpha/2)}{\Gamma(1-\alpha/2) \Gamma(m)} \,, \qquad m\ge 1 \, .
\end{equation}

\begin{definition}[$q$-Poisson process]
\label{def:q_poisson_process}
    Let $\alpha \in (0, 2)$ and let $(U_r)_{r \geqslant 1}$ be a family of 
    i.i.d. $\operatorname{Beta}(1 - \alpha/2, \alpha/2)$ variables. 
    For $q \geqslant 1$, set $W_q = \prod_{r=1}^q U_r$ and denote its law by $\mu_q^{(\alpha)}$. Let us define the measure $\nu_q^{(\alpha)}$ on $[-1, 1]$ by   
    \begin{equation}
    \nu_q^{(\alpha)}(\dint x) \coloneqq  \frac{1}{2q|x|} \mu_q^{(\alpha)}(\dint  |x|) \ ,
    \end{equation}
    that is, the measure for which for every $f: \R \to \R$ continuous and bounded,
\begin{equation}
    \int f(u) \nu_q^{(\alpha)}(\dint x) = 
    \frac{1}{2 q} \int_0^1 \frac{1}{u} (f(u) + f(-u)) \mu_q^{(\alpha)}(\dint u) \ .
\end{equation}
    The \textit{$q$-Poisson process} is the Poisson point process $N_q^{(\alpha)} = \sum_{i\ge 1} \delta_{\xi_{q, i}^{(\alpha)}}$ with intensity measure $\nu_q^{(\alpha)}$. 
\end{definition}

For $m \geqslant 1$ and $u \in [-1, 1]$, let us define the functions
\begin{equation}
    f_m(u) \coloneqq  \sign(u)^m |u|^{m/2} \, .
\end{equation}

\begin{definition}[$\alpha$-Poisson family]
\label{def:alpha_family}
    Let $\alpha \in (0, 2)$ and let 
    $\left(N_q^{(\alpha)}\right)_{q \geqslant 1}$ be independent $q$-Poisson processes defined in Definition \ref{def:q_poisson_process}. Then, for $q \geqslant 1$ and $m \geqslant 2$, let us set 
\begin{equation}
    \label{eq:def_V_q_m}
    V_{q, m}^{(\alpha)} \coloneqq  q \int f_m(u)  N_q^{(\alpha)}(\dint u) = q \sum_{i \geqslant 1} f_m \left(\xi_{q,i}^{(\alpha)} \right) \,.
\end{equation}
For $m=1$, since $f_1 \notin \mathrm{L}^1(\nu_q^{(\alpha)})$ but $f_1 \in \mathrm{L}^2(\nu_q^{(\alpha)})$, we define $V_{q, 1}^{(\alpha)}$ as a compensated Poisson integral
\begin{equation*}
    V_{q, 1}^{(\alpha)} \coloneqq  q \int f_1(u)  \tilde{N}_q^{(\alpha)}(\dint u) \in \mathrm{L}^2(\Prob)  \,,
\end{equation*}
where $\tilde N_q^{(\alpha)}(\dint u)
    =N_q^{(\alpha)}(\dint u)-\nu_q^{(\alpha)}(\dint u)$.
 We call an \textit{$\alpha$-Poisson family} the family $\left(Z_k^{(\alpha)} \right)_{k \geqslant 1}$ given by 
\begin{equation}
    \label{eq:family_alpha}
    Z_k^{(\alpha)} \coloneqq  \sum_{q | k} V_{q, k/q}^{(\alpha)} \, , \qquad k \geqslant 1 \,,
\end{equation}
where the summation in \eqref{eq:family_alpha} is over all divisors of $k$.
\end{definition}

Note that for any $m \geqslant 2$, 
\begin{equation}\label{eq:bound234}
    \E \left|V^{(\alpha)}_{q, m} \right| \leqslant q \int |f_m(\xi)| \dint \nu_q^{(\alpha)}(\xi) = \left(\E\big[ U^{m/2-1} \big]\right)^q \leqslant 1 \,.
\end{equation}
Therefore, the variables $V^{(\alpha)}_{q, m}$ are almost surely finite and so are the variables $Z^{(\alpha)}_{k}$. 

\subsection{Convergence of the characteristic polynomial and traces for $\alpha\in (0,2)$}
\label{subsec:conv_carac_poly}

Our main result is the convergence of the characteristic polynomial \eqref{eq:char_poly} towards a random analytic function defined in terms of $\alpha$-Poisson families. As in \cite{Bordenave_Chafai_Garcia, Chhaibi_Najnudel, Coste_Lambert_Zhu}, we consider the \textit{reciprocal} characteristic polynomial $p_n(z) = \det(\bfI - z\Y_n)$ which is related to the characteristic polynomial by $z^n p_n(1/z) = \det(z\bfI - \Y_n)$. Therefore, for $z \in \D$ we consider the usual characteristic polynomial in the region outside the unit disk. We endow the space of holomorphic functions on $\D$ with the topology of uniform convergence on compact sets. 

Let $\big( Z_k^{(\alpha)} \big)_{k\ge 1}$ be an $\alpha$-Poisson family. In view of \eqref{eq:bound234}, one has $\E \big| Z_k^{(\alpha)} \big| \leqslant k$. Thus, for any $\eta > 0$, an application of Markov's inequality yields $\Prob[|Z_k^{(\alpha)}| > e^{\eta k}] \leqslant k e^{-\eta k}$, which implies by a Borel-Cantelli argument that almost surely
\begin{equation*}
    \limsup_{k \to \infty} |Z_k^{(\alpha)}|^{1/k} \leqslant e^\eta \,.
\end{equation*}
Since the latter is valid for every $\eta > 0$, one derives that 
$\limsup_{k \to \infty} |Z_k^{(\alpha)}|^{1/k} \leqslant 1$ \as, so that the convergence radius of the random series 
\begin{equation*}
    \sum_{k \geqslant 1} \frac{z^k}{k} Z_k^{(\alpha)} 
\end{equation*}
is almost surely at least one by Hadamard's formula. 

\begin{theorem}[Convergence of the characteristic polynomial]
    \label{th:conv_carac_poly}
    Let $X_{11}$ be regularly varying with index $\alpha\in (0,2)$ and consider the characteristic polynomial $p_n(z) := \det (\bfI - z\Y_n)$. Then, we have the convergence in distribution, for the topology of local uniform convergence in $\D$
    \begin{equation}
        \label{eq:conv_carac_poly}
        p_n \cid F_\alpha \,, \qquad \nto\,,
    \end{equation}
    where $F_\alpha$ is the random holomorphic function 
    \begin{equation}
        \label{eq:expression_F}
        F_\alpha(z) = \exp \left( - \sum_{k \geqslant 1} \frac{z^k}{k} Z_k^{(\alpha)} \right) ,
    \end{equation}
and  
$\big(Z_k^{(\alpha)} \big)_{k \geqslant 1}$ is an $\alpha$-Poisson family from Definition~\ref{def:alpha_family}.
\end{theorem}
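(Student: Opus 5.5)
The plan is the standard route of \cite{Bordenave_Chafai_Garcia, Coste_Lambert_Zhu}: prove tightness of $(p_n)$ in the space of holomorphic functions on $\D$, and identify the limit through convergence of traces. For $|z|$ small,
\begin{equation*}
\log p_n(z) = -\sum_{k\ge 1}\frac{z^k}{k}\Tr(\Y_n^k),
\end{equation*}
so the coefficients of $p_n$ are polynomials in the traces. It therefore suffices to show two things:
\begin{enumerate}
\item[(i)] $(\Tr \Y_n^k)_{k\ge1}$ converges jointly in finite-dimensional distributions to $(Z_k^{(\alpha)})_{k\ge1}$;
\item[(ii)] $\sup_n \E|p_n(z)|^2$ is bounded locally uniformly on $\D$.
\end{enumerate}
Item (ii) gives tightness via Montel's theorem. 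Combined with (i), it shows that every subsequential limit has Taylor coefficients equal in law to those of $F_\alpha$. These coefficients determine the law of a holomorphic function.

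\textbf{Tightness.} Write $p_n(z)=\sum_k (-z)^k \sum_{|I|=k}\det(\Y_n[I])$. By symmetry of the entries and independence of the rows, the expectations $\E[\det\Y_n[I]\,\overline{\det\Y_n[J]}]$ vanish unless $I=J$. Indeed, flipping the sign of $X_{ij}$ leaves every row norm unchanged, so it acts as a symmetry of the law of $\Y_n$. Expanding both determinants over permutations, the sign flips kill every pair of terms unless the two permutations share the same set of entries. Using Hadamard's inequality with row norms at most one, this should give $\E|\det \Y_n[I]|^2\le \E\prod_{i\in I}\sum_{j\in I}Y_{ij}^2$. By independence of the rows and exchangeability within a row, this product is at most $(k/n)^k$. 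Hence $\E|p_n(z)|^2\le\sum_k\binom{n}{k}(k/n)^k|z|^{2k}\le \sum_k (e|z|^2)^k$. This is summable only for $|z|<e^{-1/2}$, so I will sharpen it using the exact Cauchy–Binet-type identity $\sum_{|I|=k}\E|\det\Y_n[I]|^2 \le \binom{n}{k}\,k!\,\E[\prod Y_{i,\sigma(i)}^2]\le$ const, which yields the geometric bound $\sum_k|z|^{2k}$ on all of $\D$.

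\textbf{Traces (the main step).} I expand
\begin{equation*}
\Tr\Y_n^k=\sum_{i_1,\dots,i_k} Y_{i_1i_2}\cdots Y_{i_ki_1}.
\end{equation*}
By symmetry, only closed walks in which each directed edge appears an even number of times, or that are built from repeated simple cycles, contribute in the limit. The heavy-tailed self-normalized structure is captured by the vector $(Y_{ij}^2)_j$ of a row. For $\alpha\in(0,2)$ this vector behaves like a Poisson–Dirichlet$(\alpha/2,0)$ partition: the largest squared entries converge to ranked points of $\mathrm{PD}(\alpha/2)$, and sums $\sum_j Y_{ij}^{2s}$ have moments computed by the Beta moments \eqref{eq:moments_beta}.

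The term $V_{q,m}^{(\alpha)}$ should come from walks that trace a simple $q$-cycle $m$ times. Such a walk contributes $q\sum_{\text{$q$-cycles}}\prod_{r=1}^q Y_{i_ri_{r+1}}^m$, and the factor $q$ accounts for rotations. A $q$-cycle's weight $u=\prod_r Y_{i_ri_{r+1}}$ has, after size-biasing along the cycle, $|u|^2$ distributed as $\prod_r U_r$. The cycle-count point process $\sum \delta_{u}$ converges to the Poisson process $N_q^{(\alpha)}$ with intensity $\nu_q^{(\alpha)}$; the $1/(2q|x|)$ density arises from the size-biasing and rotations. I will prove this by the method of moments (factorial moments) for each fixed $q$, and show asymptotic independence across distinct $q$ because the cycles of different lengths are vertex-disjoint with high probability. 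Walks with other shapes, such as overlapping cycles or trees, must be shown negligible in $\mathrm{L}^1$ or $\mathrm{L}^2$, using that $\E[Y_{11}^{2s}]\sim c_s/n$ for $s\ge1$ under regular variation.

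For $m=1$ the sum is not absolutely convergent, so I will truncate at $|u|>\varepsilon$ and control the small-cycle part in $\mathrm{L}^2$ uniformly in $n$. This gives the compensated integral; symmetry makes the compensator vanish in the symmetric version. I expect this combinatorial moment computation, together with the truncation for $m=1$, to be the main obstacle.
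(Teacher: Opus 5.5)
Your reduction (tightness in $\mathcal H(\D)$ plus convergence of the Taylor coefficients, which are universal polynomials in the traces) is the paper's, and so is your tightness argument. The paper computes $\E[\det_I(\Y_n)^2]=|I|!\,n^{-|I|}$ exactly by the same sign-flip and row-independence argument, so the Hadamard detour can be dropped.

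The trace step is where you take a genuinely different route. The paper uses the plain method of moments and never builds a cycle point process:
\begin{itemize}
\item A path count, using only $n\beta_{2k}\to c_k^{(\alpha)}$ and $n^r\beta_{2k_1,\ldots,2k_r}=O(1)$, compares $\E[Y_{\mathbf i}]=O(n^{-\sum_v d_v})$ with the $O(n^{|V|})$ vertex choices. Hence only tuples of repeated simple cycles survive in the joint moments of the centred traces.
\item The resulting limit $\sum_{\pi\in\mathcal P_{\ell,2}}\prod_{B\in\pi}\sum_{q\mid\gcd(k_j:j\in B)}q^{|B|-1}\bigl(c^{(\alpha)}_{k_B/(2q)}\bigr)^q$ is matched with the moment--cumulant expansion of the centred $Z_k^{(\alpha)}$, via the Poisson cumulant formula $\kappa_s(\int g_1\,dN,\ldots,\int g_s\,dN)=\int g_1\cdots g_s\,d\nu$.
\item Moment determinacy concludes, since the cumulants of $V_{q,m}^{(\alpha)}$ are bounded by $q^{s-1}$. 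The compensated integral for $m=1$ appears automatically, and no truncation is needed.
\end{itemize}
Your route (Poisson convergence of large cycle weights by factorial moments, plus truncation) is viable and more structural. It shows that the atoms of $N_q^{(\alpha)}$ are the signed squared weights of the $q$-cycles of $\Y_n$. The intensity $\nu_q^{(\alpha)}$ then comes from rotations, sign symmetry and $n\,\P(Y_{11}^2\in dx)\to x^{-1}\mathrm{Beta}(1-\tfrac{\alpha}{2},\tfrac{\alpha}{2})(dx)$ on $(0,1]$. That last limit follows from the same moment asymptotics by Hausdorff determinacy. The cost is extra estimates:
\begin{itemize}
\item control of overlapping cycles in the factorial moments;
\item a uniform-in-$n$ $\mathrm{L}^2$ bound on cycles with $|u_C|^2\le\vep$, which holds because distinct cycles are orthogonal under sign flips;
\item an $\mathrm{L}^2$ bound on non-cycle walks, which also needs $n^r\beta_{2k_1,\ldots,2k_r}=O(1)$ and is essentially the paper's computation anyway.
\end{itemize}

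Three details need correcting.
\begin{itemize}
\item The process converging to $N_q^{(\alpha)}$ is $\sum_C\delta_{\sign(u_C)u_C^2}$, not $\sum_C\delta_{u_C}$. Since $f_m(\sign(u)u^2)=u^m$, this choice reproduces $V_{q,m}^{(\alpha)}$. With $\delta_{u_C}$ you would get $\sum_C\sign(u_C)^m|u_C|^{m/2}$, and the limit would not have intensity $\nu_q^{(\alpha)}$.
\item Vertex-disjointness across different $q$ holds only for cycles with $|u_C|^2>\vep$. There are $\asymp n^q$ heavily overlapping small cycles, so asymptotic independence must be proved after truncation.
\item Non-cycle walks cannot be discarded in $\mathrm{L}^1$ through the triangle inequality. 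For instance, $\sum_{a\neq b}\E|Y_{aa}Y_{ab}Y_{ba}|$ is of order $n^{2-3/\alpha}$ up to slowly varying factors when $\alpha\in(1,2)$, which diverges for $\alpha>3/2$. The sign cancellations of an $\mathrm{L}^2$ computation are needed there.
\end{itemize}
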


\begin{definition}[$\alpha$-heavy multiplicative chaos]
    We call the random analytic function $F_\alpha$ in \eqref{eq:expression_F} defined for $z \in \D$ the \textit{$\alpha$-heavy multiplicative chaos}.
\end{definition}

\begin{remark}[Gaussian to Poisson interpolation]
    The random analytic function $F_\alpha$ has the form of a \textit{multiplicative chaos}, that is, the exponential of a random series. In the limit case $\alpha \to 2$ or when $\alpha>2$, one finds that $p_n$ converges to the \textit{Gaussian multiplicative chaos}, see Section \ref{subsec:gaussian_case}, which is a universal object that was obtained as the limit for the characteristic polynomial of i.i.d.\ matrices with a finite second moment \cite{Bordenave_Chafai_Garcia}, for Gaussian elliptic matrices \cite{franois2306asymptotic} and for circular $\beta$-ensembles \cite{Najnudel_Paquette_Simm, Chhaibi_Najnudel}. 
    On the other side, for $\alpha=0$, the limiting function is a Poisson series, see Section \ref{subsec:alpha_0}, known as the Poisson multiplicative chaos, which in turn appeared as the limiting characteristic polynomial for uniform \cite{Coste_Lambert_Zhu} or Ewens random permutation matrices \cite{franccois2025}. 
     The limiting function $F_\alpha$ of Theorem \ref{th:conv_carac_poly} can therefore be seen as an interpolation between Gaussian and Poisson limits for characteristic polynomials. 
     \end{remark}
 
We have the following consequence of Theorem \ref{th:conv_carac_poly} for the spectral radius of $\Y_n$,
$$\rho(\Y_n):=\max_{i=1,\ldots,n} |\lambda_i(\Y_n)|\,,$$
where $\lambda_1(\Y_n),\ldots,\lambda_n(\Y_n)$ are the eigenvalues of $\Y_n$.

\begin{corollary}[Upper bound on the spectral radius]
\label{cor:upper_bound_spectral_radius}
    For every $\vep > 0$, it holds
    \begin{equation}
        \lim_{n \to \infty} \Prob[\rho(\Y_n) \geqslant 1 + \vep] = 0 \,.
    \end{equation}
\end{corollary}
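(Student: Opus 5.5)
The plan is the standard Hurwitz-type argument of \cite{Bordenave_Chafai_Garcia}, applied to Theorem~\ref{th:conv_carac_poly}. The zeros of $p_n(z)=\det(\bfI - z\Y_n)$ in $\D$ are exactly the points $1/\lambda_i(\Y_n)$ with $|\lambda_i(\Y_n)|>1$. Hence the event $\{\rho(\Y_n)\geqslant 1+\vep\}$ is the event that $p_n$ has a zero in the closed disk $\overline{D}_r:=\{|z|\leqslant r\}$, where $r:=(1+\vep)^{-1}<1$. The limit $F_\alpha$ is the exponential of a series with radius of convergence at least one a.s., so $F_\alpha$ has no zero in $\D$ almost surely.

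First fix $r<r'<1$. On the space $\mathcal{H}(\D)$ of holomorphic functions with the local uniform topology, consider the set
\[
A:=\{f\in\mathcal{H}(\D): f \text{ has a zero in } \overline{D}_r\}.
\]
We check that $A$ is closed. If $f_j\to f$ locally uniformly and $f_j(z_j)=0$ with $z_j\in\overline{D}_r$, pass to a subsequence $z_j\to z\in\overline{D}_r$. Uniform convergence on $\overline{D}_{r'}$ together with equicontinuity then gives $f(z)=\lim_j f_j(z_j)=0$. So $A$ is closed; note that the identically zero function belongs to $A$, which is harmless.

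Since $\mathcal{H}(\D)$ is Polish, Theorem~\ref{th:conv_carac_poly} and the Portmanteau theorem give
\[
\limsup_{n\to\infty}\Prob[\rho(\Y_n)\geqslant 1+\vep]=\limsup_{n\to\infty}\Prob[p_n\in A]\leqslant \Prob[F_\alpha\in A]=0,
\]
because $F_\alpha=\exp(\cdots)$ never vanishes on $\D$. This settles $\alpha\in(0,2)$.

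To cover \emph{any} symmetric entry distribution, as the abstract claims, I would invoke the boundary results announced in Section~\ref{subsec:boundary_results}. These state that $p_n$ converges to a Poisson multiplicative chaos for $\alpha=0$ and to a Gaussian multiplicative chaos when $X_{11}\in\DAN$ (including $\alpha\geqslant 2$). Both limits are again exponentials of random series converging on $\D$, so the same Portmanteau argument applies verbatim.

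The main obstacle is a symmetric law that is neither regularly varying nor in $\DAN$. There I would argue by subsequences. For tightness, the key point is $\E\,|\Tr \Y_n^k|\leqslant$ a bound uniform in $n$ obtained via exchangeability and symmetry; this is the same moment computation underlying the trace results. That bound makes $\{p_n\}$ tight in $\mathcal{H}(\D)$. Along subsequences the limits still have the form $\exp(-\sum_k z^k Z_k/k)$ with $\E|Z_k|\leqslant k$, and the Borel--Cantelli argument preceding Theorem~\ref{th:conv_carac_poly} then shows every subsequential limit is zero-free on $\D$. Applying the closed-set bound along each subsequence finishes the proof. If the uniform trace bound fails in general, the corollary would be restricted to the regular-variation and $\DAN$ cases proved in the paper.
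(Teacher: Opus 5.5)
For regularly varying entries with $\alpha\in(0,2)$, your argument is correct and is essentially the paper's. The paper pushes $p_n\cid F_\alpha$ through $f\mapsto\min_{|z|\le(1+\vep)^{-1}}|f(z)|$ and uses that $F_\alpha$ is zero-free on $\D$; this is equivalent to your Portmanteau bound on the closed set $A$. Your extension to the boundary cases via Theorems~\ref{thm:poisson} and~\ref{thm:gaussian} also works, since those limits are zero-free on $\D$ as well.

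Your extension to arbitrary symmetric laws goes beyond what the paper proves, since it derives the corollary only from Theorem~\ref{th:conv_carac_poly}. One inference in that extension is not valid as written. Bounds on $\E|\Tr\Y_n^k|$ for each fixed $k$ do not give tightness of $(p_n)$ in $\mathcal{H}(\D)$. The expansion $\log p_n(z)=-\sum_k z^k\Tr\Y_n^k/k$ holds only for $|z|<1/\rho(\Y_n)$, which is exactly the quantity you are trying to control, so the reasoning is circular.

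The fix is that tightness needs no tail assumption at all. The proof of Proposition~\ref{thm:tightness} uses only symmetry, row independence and $\E Y_{11}^2=1/n$, so $\E|p_n(z)|^2\le(1-|z|^2)^{-1}$ for every symmetric law. The trace bound is then needed only to show that subsequential limits are zero-free. For that you need $\limsup_n\E|\Tr\Y_n^k|$ to grow subexponentially in $k$, because the limit $Z_k$ of $\Tr\Y_{n'}^k$ (a fixed polynomial of the limiting coefficients, by Newton's identities) satisfies $\E|Z_k|\le\liminf\E|\Tr\Y_{n'}^k|$.

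This bound does hold. Lemma~\ref{lem:allmoments}(a) and the reduction to cycle configurations in the proof of Lemma~\ref{lem:product_formula} do not depend on the entry distribution. The surviving configurations (same cycle or disjoint cycles) give $\limsup_n\E[(\Tr\Y_n^k)^2]\le\sum_{q\mid k}q+(\sum_{q\mid k}1)^2\le 2k^2$. With these two substitutions, your subsequence argument goes through.
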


\begin{proof}
    Let us fix $\vep > 0$. Using the continuous mapping theorem, 
    \begin{align*}
        \Prob \left[\rho(\Y_n) \geqslant 1 + \vep \right] &\leqslant 
        \Prob \left[\min_{|z| \le \tfrac{1}{1+\vep}} |\det(\bfI-z\Y_n)| = 0 \right] 
        \to \Prob \left[\min_{|z| \le \tfrac{1}{1+\vep}} |F_\alpha(z)| = 0 \right] = 0 \,.
    \end{align*}
\end{proof}

The finite-dimensional behavior of the spectral radius is illustrated in Figure~\ref{fig:pho}, where the empirical spectral radii remain close to the asymptotic upper bound $1$ across the displayed values of $\alpha$ and $n$.

\begin{figure}[h]
    \centering
    \includegraphics[width=0.5\linewidth]{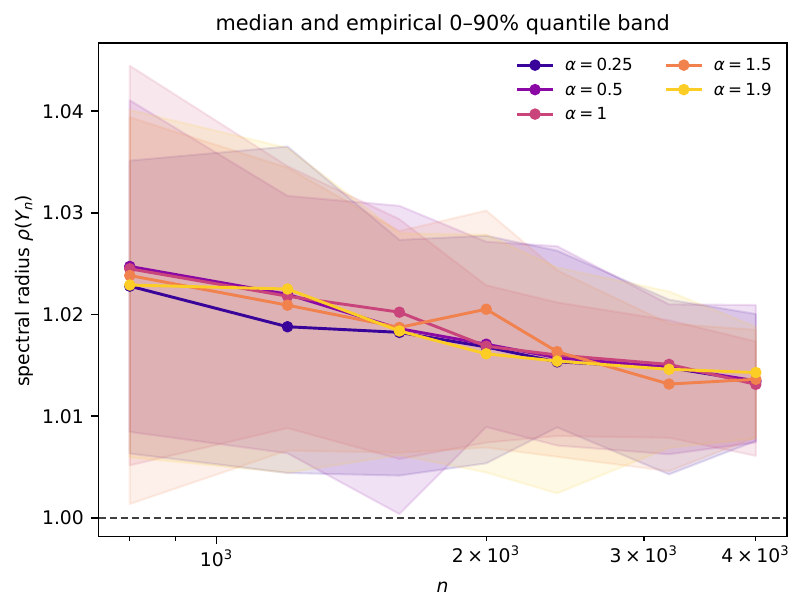}
    \caption{Illustration of Corollary ~\ref{cor:upper_bound_spectral_radius}. Spectral-radius simulations for $\Y_n$ with independent symmetric Pareto entries satisfying $\mathbb P(|X_{ij}|>x)=x^{-\alpha}$ for $x\geq1$. We use $\alpha\in\{0.25,0.5,1,1.5,1.9\}$ and $n\in\{800,1200,1600,2000,2400,3200,4000\}$.} 
    \label{fig:pho}
\end{figure}

Our second main result is the joint convergence of traces of the self-normalized matrix $\Y_n$. 

\begin{theorem}[Convergence of traces]
\label{thm:conv_traces}
    Let $X_{11}$ be regularly varying with index $\alpha\in (0,2)$ and let $\ell \geqslant 1$ and $k_1, \dots, k_{\ell} \geqslant 1$. Then we have the convergence in distribution 
    \begin{equation}
    \label{eq:conv_traces}
    \left(\Tr \big[ \Y_n^{k_1} \big], \dots, \Tr \big[ \Y_n^{k_\ell} \big] \right) \cid \left(Z_{k_1}^{(\alpha)}, \dots, Z_{k_\ell}^{(\alpha)} \right) \,, \qquad \nto\,,
\end{equation}
where $\big(Z_k^{(\alpha)}\big)_{k \geqslant 1}$ is an $\alpha$-family defined in Definition  \ref{def:alpha_family}.
\end{theorem}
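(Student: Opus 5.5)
Write $\Tr[\Y_n^k]=\sum_{i_1,\dots,i_k} Y_{i_1i_2}Y_{i_2i_3}\cdots Y_{i_ki_1}$ as a sum over closed walks of length $k$ on $[n]$. I will group the walks by their underlying cycle structure. Primitive cycles of length $q\mid k$ traversed $m=k/q$ times, with distinct vertices $i_1,\dots,i_q$, contribute $q\,(Y_{i_1i_2}\cdots Y_{i_qi_1})^m$. The factor $q$ accounts for the rotations; vertices are distinct for $q\ge2$, and for $q=1$ the term is $Y_{ii}^m$. All other walks, namely those whose edge multiset is not a single cycle repeated, should contribute $o_\P(1)$. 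This splits the trace as
\[
\Tr[\Y_n^k]=\sum_{q\mid k} S_{q,k/q}+R_{n,k},\qquad S_{q,m}=\frac1q\sum_{(i_1,\dots,i_q)\ \mathrm{distinct}} q\,\bigl(Y_{i_1i_2}\cdots Y_{i_qi_1}\bigr)^m .
\]
This matches the structure $Z_k^{(\alpha)}=\sum_{q\mid k}V^{(\alpha)}_{q,k/q}$. The goal is therefore the joint convergence $(S_{q,m})_{q,m}\cid (V^{(\alpha)}_{q,m})_{q,m}$ together with $R_{n,k}\cip0$.

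\textbf{Step 1: single-row structure.} For $\alpha\in(0,2)$, the classical result on self-normalized heavy-tailed sums is used: the point process of squared normalized entries $\{Y_{ij}^2\}_j$ of a row converges to a Poisson--Dirichlet$(\alpha/2,0)$ type configuration. A size-biased pick $Y_{ij}^2$ with $j$ uniform satisfies $n\,\E[Y_{ij}^2 g(Y_{ij}^2)]\to\E[g(U)]$ with $U\sim\operatorname{Beta}(1-\alpha/2,\alpha/2)$. This is consistent with $\E[Y_{11}^{2m}]\sim n^{-1}\Gamma(m-\alpha/2)/(\Gamma(1-\alpha/2)\Gamma(m))$, which I would take as the basic moment input. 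Along a cycle $i_1\to\dots\to i_q\to i_1$ the rows are distinct and independent, so the cycle weight $|Y_{i_1i_2}\cdots Y_{i_qi_1}|^2$ behaves like $n^{-q}$ times a product of $q$ independent size-biased picks. This yields intensity $\mu_q^{(\alpha)}(\dint u)/u$ per cycle. There are about $n^q/q$ cycles, and by symmetry of $X$ the sign is symmetric. The resulting intensity is exactly $\nu_q^{(\alpha)}$, and the variable $\xi=\pm|\text{cycle weight}|$ enters through $q f_m(\xi)$.

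\textbf{Step 2: Poisson convergence.} I would show that the point process $\sum_{\text{cycles }c\text{ of length }q}\delta_{\pm w_c}$, with $w_c=|Y_c|$ the product of the entries along $c$, converges to $N_q^{(\alpha)}$ vaguely on $[-1,1]\setminus\{0\}$, jointly and independently over $q$. I would do this by the method of factorial moments (or Chen--Stein), using the fact that distinct cycles sharing an edge or a vertex-row are rare. Since $f_m$ is integrable near $0$ for $m\ge2$, a truncation at level $\vep$ plus the first-moment bound analogous to \eqref{eq:bound234} for the finite-$n$ sums gives $S_{q,m}\to V_{q,m}^{(\alpha)}$. For $m=1$ I need the $\mathrm{L}^2$ version. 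By symmetry $\E S_{q,1}=0$, and second moments of the small-weight part match $q^2\int_{|u|<\vep}u\,\nu_q(\dint u)$, which tends to $0$ as $\vep\to0$. This reproduces the compensated integral.

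\textbf{Step 3: negligible walks.} This is the main obstacle. The heavy-tailed entries rule out naive moment counting, since $\E|Y|^{2m}$ is only $O(1/n)$ rather than $O(n^{-m})$. The plan is to bound $\E|R_{n,k}|^2$, or $\E|R_{n,k}|$ after symmetrization, using these moment asymptotics. By symmetry, only walks in which every edge appears an even number of times survive in $\E R$. Each row $i$ visited contributes $\prod_j\E$-type factors controlled by $\E[Y_{i j_1}^{2a_1}\cdots Y_{ij_r}^{2a_r}]\lesssim n^{-r}$ (exchangeability and $\sum_jY_{ij}^2=1$). Counting free vertices versus rows, a genus/Euler-characteristic argument should then show that any non-cycle walk loses a factor $n^{-1}$. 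Second moments of the remainder are handled similarly on the union of two walks. Finally, Slutsky's lemma combines Steps 2 and 3 jointly over $k_1,\dots,k_\ell$, since all traces are continuous functionals of the same family of cycle point processes.
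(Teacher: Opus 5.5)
Your argument is correct, but it takes a genuinely different route from the paper's. The paper never splits the trace into cycle sums. It runs the method of moments on the centred traces $a_{k,n}$ and uses $\E[Y_{\mathbf i}]=O(n^{-\sum_v d_v})$ to show that only tuples of out-degree-one cycles survive. This gives the closed formula of Lemma~\ref{lem:product_formula}, which the paper then recognises as $\E\bigl[\prod_r \tilde Z^{(\alpha)}_{k_r}\bigr]$ through three ingredients: the moment--cumulant formula, the divisor structure of Lemma~\ref{lem:cumulants_division}, and the Poisson identity $\kappa_s\bigl(\int f_1\,\mathrm{d}N,\dots,\int f_s\,\mathrm{d}N\bigr)=\int f_1\cdots f_s\,\mathrm{d}\nu$. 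Moment-determinacy of the limit then finishes the proof. You instead realise the atoms of $N_q^{(\alpha)}$ concretely as the signed squared weights $\sign(Y_c)\,Y_c^2$ of the $q$-cycles $c$ of $\Y_n$. In Step~2 you wrote $\pm|Y_c|$, which would give $f_m(\xi)=\pm|Y_c|^{m/2}$ instead of $Y_c^m$; Step~1 has it right. Every step goes through with the paper's inputs:
\begin{itemize}
\item The probability measures $n\,u\,\P(Y_{11}^2\in \mathrm{d}u)$ converge weakly to $\operatorname{Beta}(1-\alpha/2,\alpha/2)$ by Lemma~\ref{lem:allmoments}(b) and the method of moments on $[0,1]$. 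This yields the intensity $\nu_q^{(\alpha)}$, and since $\mu_q^{(\alpha)}$ has no atom at $0$, it also gives the $m=2$ truncation.
\item In the factorial moments, $Y_c^2>\vep$ forces $Y_e^2>\vep$ on every edge of $c$ (as $|Y_e|\le 1$). Two distinct cycles sharing a vertex must have different out-edges at some vertex, so their union satisfies $\sum_v d_v\ge |V|+1$ and overlaps cost $O(n^{-1})$.
\item The $m=1$ small part is orthogonal across distinct cycles, by flipping the sign of an edge lying in one cycle but not the other.
\end{itemize}
Your Step~3, which you flag as the main obstacle, is actually the easiest part. A closed walk that is not a repeated cycle has a vertex with $d_v\ge 2$, so the union of two such walks does too, and the same count gives $\E[R_{n,k}^2]=O(n^{-1})$ with no Euler-characteristic argument. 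This is exactly the paper's reduction to $\mathcal{C}_{k_1}\times\cdots\times\mathcal{C}_{k_\ell}$. The paper's route is shorter and purely algebraic, with no truncation or compensation steps. Yours explains where the $\alpha$-Poisson family comes from and proves the stronger statement that the cycle point processes of $\Y_n$ converge, hence so does any functional $\sum_c g(Y_c)$ with suitable $g$. It also produces $Z_k^{(\alpha)}=\sum_{q\mid k}V^{(\alpha)}_{q,k/q}$ directly rather than by matching cumulants, and it never needs the limiting traces to be determined by their moments.
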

It is worth emphasizing that the limits of the traces are non-Gaussian. This is in stark contrast to the literature on matrices with i.i.d.\ entries \cite{Bordenave_Chafai_Garcia} and circular $\beta$-ensembles \cite{Najnudel_Paquette_Simm}. Poisson limits were found for traces of (sums of) uniform permutation matrices \cite{Coste_Lambert_Zhu} and matrices with non-centered Bernoulli entries \cite{Coste_Bernoulli}. The convergence of traces was extended beyond the uniform case for random permutations following the Ewens distribution \cite{Nikeghbali_Zeindler}. 

Figure~\ref{fig:trace-interior} illustrates the trace convergence in Theorem~\ref{thm:conv_traces} for $\Tr \big[ \Y_n^{k} \big]$ with $k\in\{3,4,6\}$. For small $\alpha$, the limiting $\alpha$-Poisson laws retain a pronounced Poissonian structure, reflecting the contribution of a few exceptionally large entries. As $\alpha$ increases, the accumulation of many smaller contributions smooths the distribution, and the $\alpha$-Poisson law approaches the Gaussian endpoint as $\alpha\uparrow2$. For every fixed $\alpha<2$, however, the limiting law remains the $\alpha$-Poisson law of Theorem~\ref{thm:conv_traces}, rather than a Gaussian law.

\begin{figure}
    \centering
    \includegraphics[width=0.9\linewidth]{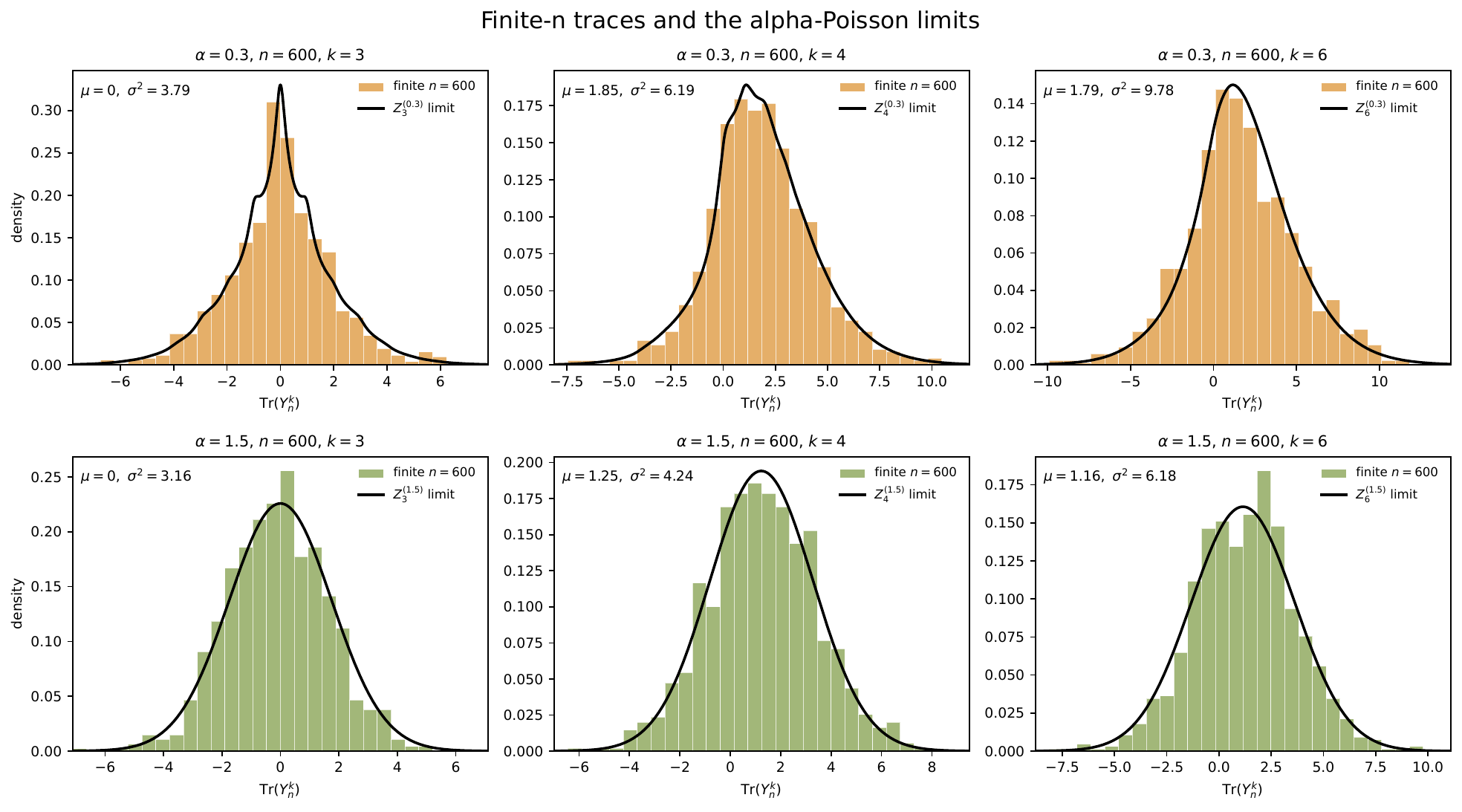}
    \caption{Illustration of Theorem \ref{thm:conv_traces}. Raw finite-$n$ traces for row-normalized symmetric Pareto entries satisfying $\mathbb P(|X_{11}|>x)=x^{-\alpha}$ for $x\geq1$, overlaid with the limiting marginal densities of $Z_k^{(\alpha)}$. The upper row uses $\alpha=0.3$ and illustrates behavior near the Poisson endpoint, whereas the lower row uses $\alpha=1.5$ and illustrates the approach toward the Gaussian endpoint. Each black curve is the corresponding $\alpha$-Poisson limit, not a Gaussian approximation.}
    \label{fig:trace-interior}
\end{figure}
\subsection{Boundary cases $\alpha=0$ and $\alpha=2$}\label{subsec:boundary_results}

Interestingly, the limits in Theorems~\ref{th:conv_carac_poly} and \ref{thm:conv_traces} interpolate between Poisson behavior at $\alpha=0$ and Gaussian behavior at $\alpha=2$.

\subsubsection{Poisson behavior}\label{sec:poissonbehavior}

Recall that for $\alpha\in (0,2)$ the random variables $Z_k^{(\alpha)}$ are continuous. If $X_{11}$ has slowly varying tails, that is $\alpha=0$, then discrete distributions emerge, as the following result shows. 
\begin{theorem}[Poisson behavior for $\alpha=0$]\label{thm:poisson}
Let $\big(N_{-,\ell}\big)_{\ell\ge 1}$ and $\big(N_{+,\ell}\big)_{\ell\ge 1}$ be two i.i.d.\ sequences of independent random variables  with $N_{-,\ell}$ following a Poisson $\mathcal{P}\big(\frac{1}{2\ell}\big)$ distribution. Define 
    \begin{equation*}
        Z_k^{(0)} = \sum_{\ell | k} \ell  (N_{+,\ell } + (-1)^{k/\ell } N_{-,\ell }) \,, \qquad k\ge 1\,.
    \end{equation*}
    For integers $k_1,\ldots, k_\ell\ge 1$, we then have the convergence of traces
        \begin{equation}
        \left(\Tr \left[ \Y^{k_1}_n \right], \dots, \Tr \left[ \Y^{k_\ell}_n \right] \right) \cid \left(Z_{k_1}^{(0)}, \dots, Z_{k_\ell}^{(0)} \right) \,,\qquad \nto\,,
    \end{equation}
    and the following convergence in distribution  of characteristic polynomial $p_n$ for the topology of local uniform convergence in $\D$:
    \begin{equation*}
        p_n \cid F_0 \,, \qquad \nto\,,
    \end{equation*}
    where $F_0$ is the random holomorphic function 
    \begin{equation}
        \label{eq:F_unif_permutations}
        F_0(z) = \exp \left( - \sum_{k \geqslant 1} \frac{z^k}{k} Z_k^{(0)} \right) .
    \end{equation}
\end{theorem}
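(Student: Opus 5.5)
Throughout, I treat $\alpha=0$ exactly as the general case $\alpha\in(0,2)$, replacing only the single-row input. The trace of a power is a sum over closed walks,
\[
\Tr[\Y_n^k]=\sum_{i_1,\dots,i_k} Y_{i_1i_2}Y_{i_2i_3}\cdots Y_{i_ki_1}.
\]
The walks that survive are those tracing a cycle of $q$ distinct vertices, repeated $m=k/q$ times. The key input for $\alpha=0$ is the behavior of a single row: $(Y_{i1}^2,\dots,Y_{in}^2)$ is the self-normalized vector $X_{ij}^2/\sum_l X_{il}^2$. For slowly varying tails, the maximum of $|X_{ij}|$ dominates the sum, so $\max_j Y_{ij}^2\cip 1$. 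Thus each row has asymptotically one entry $\pm1$, at a uniform position with a symmetric random sign, and all other entries are negligible. Equivalently, the Poisson–Dirichlet$(\alpha/2)$ structure degenerates to the point mass at $(1,0,0,\dots)$. This is the $\alpha\to0$ limit of the Beta$(1-\alpha/2,\alpha/2)$ variables, where $U\equiv 1$ and $\mu_q=\delta_1$, so $\nu_q=\frac1{2q}(\delta_1+\delta_{-1})$. $\Y_n$ is therefore close to a signed random map matrix. Its cycles of length $\ell$ converge jointly to independent Poisson counts with mean $1/\ell$, and the sign of each cycle is symmetric. Splitting by sign gives $N_{+,\ell},N_{-,\ell}\sim\mathcal P(1/(2\ell))$, and a negative $\ell$-cycle contributes $\ell(-1)^{k/\ell}$ to $\Tr[\Y^k]$. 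This yields $Z_k^{(0)}$.

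\textbf{Traces.} I would compute joint moments (or factorial moments of cycle counts) via the walk expansion, following the proof of Theorem~\ref{thm:conv_traces}. The expected weight of a simple $q$-cycle repeated $m$ times is
\[
n^{q}\,\bigl(\E[Y_{11}^{m}\,\1\{\cdot\}]\bigr)^q\approx n^q\bigl(\E|Y_{11}|^{m}\bigr)^q.
\]
With $\E|Y_{11}|^m\sim 1/n$ for every $m\ge 1$, this equals $1$, consistent with $q\nu_q$ having total mass $1$. Symmetry kills odd $m$ only in the signed sense, which produces the $\pm$ split. The fact $\E|Y_{11}|^m\sim n^{-1}$ for all $m$ follows from $\E[\sum_j |Y_{1j}|^m]\to 1$, which itself follows from $\max_j Y_{1j}^2\cip1$ by bounded convergence.

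\textbf{Negligible walks.} The remaining task is to show that walks visiting some vertex twice in a non-cycle pattern, and the case $m=1$ with many small entries, are negligible. For $m=1$, $\E[\sum_j Y_{1j}^2\1\{|Y_{1j}|<1-\delta\}]\to0$. This gives an $L^2$ bound, so unlike $\alpha>0$ no compensation is needed. I expect this to be the main obstacle: a uniform control of the number of walk shapes against moments of the form $\E\prod|Y_{ij}|^{a_j}$ within one row, which needs row-dependence estimates, namely the bound $\E\prod_j|Y_{1j}|^{a_j}\lesssim n^{-\#\{j:a_j>0\}}$ up to slowly varying corrections. I would get this from exchangeability together with the degeneration, noting that two coordinates both close to $1$ is impossible.

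\textbf{Characteristic polynomial.} I would argue as in Theorem~\ref{th:conv_carac_poly}, writing $\log p_n(z)=-\sum_k z^k\Tr[\Y_n^k]/k$. Trace convergence gives finite-dimensional convergence of the truncated series. Tightness in $\D$ comes from a uniform bound $\E|\Tr[\Y_n^k]|\lesssim k$ or $\E|\Tr\Y_n^k|^2\lesssim k^2$, obtained from the same walk counting. Combined with $\|\Y_n\|$-free bounds, this shows $\sum_{k>K}|z|^k|\Tr \Y_n^k|/k\to0$ uniformly on compacts in probability as $K\to\infty$. Finally, $F_0$ is well defined because $\E|Z_k^{(0)}|\le\sum_{\ell|k}\ell\cdot\frac1\ell\le k$, and the Hadamard argument from Section~\ref{subsec:conv_carac_poly} applies.
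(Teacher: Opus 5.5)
Your treatment of the traces follows the paper's proof: the walk expansion of Lemma~\ref{lem:product_formula} with $c_k^{(0)}=1$, matched against the Poisson cumulants $\kappa_s(V^{(0)}_{q,m_1},\dots,V^{(0)}_{q,m_s})=q^{s-1}\1\{m_1+\cdots+m_s\in 2\N\}$.

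The step you call the main obstacle is not one. By symmetry only even exponents occur. The multinomial expansion of $1=\E(\sum_t Y_{1t}^2)^k$ has nonnegative terms, so $\binom{n}{r}\beta_{2k_1,\dots,2k_r}\le 1$ exactly, for every distribution, with no slowly varying corrections (Lemma~\ref{lem:allmoments}(a)). Once $n\beta_{2k}\to1$, the same identity forces $n^r\beta_{2k_1,\dots,2k_r}\to0$ for $r\ge2$. Your derivation of $n\beta_{2k}\to1$ from $\max_j Y_{1j}^2\cip 1$ by bounded convergence is fine for exponents $\ge 2$. Your claim $\E|Y_{11}|\sim 1/n$ does not follow that way, since $\sum_j|Y_{1j}|$ is only bounded by $\sqrt n$; it is, however, never needed.

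The genuine gap is the passage to the characteristic polynomial. You want to control $\log p_n(z)=-\sum_k z^k\Tr[\Y_n^k]/k$ on compacts of $\D$ through a bound $\E|\Tr\Y_n^k|^2\lesssim k^2$ uniform in $n$ and $k$. This fails for two reasons.

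First, the walk counting only gives asymptotics for fixed $k$ as $n\to\infty$; the number of shapes and the error terms grow with $k$. It therefore says nothing about $k$ comparable to or larger than $n$, which the tail $\sum_{k>K}$ requires.

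Second, the bound is false there. For each fixed $n\ge2$, with positive probability all rows are close to $n^{-1/2}(1,\dots,1)$. Then $\Y_n$ has an eigenvalue near $\sqrt n$, so $\E|\Tr\Y_n^k|^2$ grows exponentially in $k$. Relatedly, the series for $\log p_n$ converges only on $|z|<1/\rho(\Y_n)$, and $p_n$ may vanish inside $\D$. Showing that this does not happen on a given compact with high probability amounts to $\rho(\Y_n)\le 1+o(1)$ in probability. The paper obtains that only as a \emph{consequence} of the convergence of $p_n$ (Corollary~\ref{cor:upper_bound_spectral_radius}), so your route is circular.

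The missing idea is to prove tightness of $p_n$ itself rather than of its logarithm, as in Proposition~\ref{thm:tightness}:
\begin{itemize}
\item Expand $p_n(z)=1+\sum_{k=1}^n z^k\Delta_k(\Y_n)$ into principal minors.
\item Row independence and symmetry give $\E[\det_I(\Y_n)\det_J(\Y_n)]=0$ for $I\neq J$, and $\E[\det_I(\Y_n)^2]=|I|!\,n^{-|I|}$ since $\E[Y_{11}^2]=1/n$.
\item Hence $\E|p_n(z)|^2\le(1-|z|^2)^{-1}$, which yields tightness in $\D$.
\item The Taylor coefficients of $p_n$ are fixed polynomials in finitely many traces, so your trace convergence gives convergence of the coefficients.
\item Lemma~\ref{lem:conditions_a_b} then concludes, without ever controlling $\Tr\Y_n^k$ for large $k$.
\end{itemize}
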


\begin{remark}  
    The random analytic function $\log(F_0(z))$ 
    is the symmetric version of the logarithm of limiting characteristic polynomial 
    of a uniform permutation matrix \cite{Coste_Lambert_Zhu}, which was given by 
    \begin{equation*}
       -\sum_{k \geqslant 1} \frac{z^k}{k} Z_k'\,,
    \end{equation*}
    where $Z_k'= \sum_{\ell | k} \ell N_\ell $ with $(N_\ell)_{\ell \geqslant 1}$ being a sequence of independent Poisson 
    random variables with respective parameters $(1/\ell)_{\ell \geqslant 1}$. 
\end{remark}

For slowly varying distributions of $X_{11}$, that is $\P(|X_{11}|>x)=L(x)$ with slowly varying function $L$, it is well-known (e.g.~\cite[Theorem~1]{haeusler:mason:1991}) that 
\begin{equation} \label{eq:dhk}
    1-\E\Big[\max_{j=1,\ldots,n} |Y_{1j}|\Big] \to 0\,, \qquad \nto\,.
\end{equation}
This in agreement with the principle of ``one big jump'' which is folklore in extreme value theory. This principle says that one of the heavy-tailed $|X_{1j}|$'s is exceptionally large compared to the rest. The heavier the tails, the more pronounced this effect becomes. The convergence rate in \eqref{eq:dhk}, therefore, depends on how fast the slowly varying function $L$ approaches zero. Slower convergence of $L(x)$ to zero for $x\to \infty$ corresponds to heavier tails and a faster convergence rate in \eqref{eq:dhk}. If $L\to 0$ sufficiently slowly, we thus can obtain
\begin{equation} \label{eq:assumption_L}
    n\Big(1-\E\Big[\max_{j=1,\ldots,n} |Y_{1j}|\Big] \Big) \to 0\,, \qquad \nto\,.
\end{equation}

\textbf{Comparison with one-hot matrices.}
Let us consider the independent rows $\y_i = (Y_{i1}, \ldots, Y_{in})$ for $1 \leqslant i \leqslant n$ and let $k_i = \arg \max_{1 \leq j \leq n} |Y_{ij}|$ be the column index of the largest entry in row $i$. The fact that the $\alpha = 0$ case is structurally similar to permutation matrices can be explained using the following result from \cite{Dong_Heiny_Yao}. For every $i \in [n]$, it holds
\begin{equation}
    \label{eq:conv_to_pm_row}
    \|\y_i - \mathrm{sign}(Y_{i k_i}) \e_{k_i}\|_2 \overset{\P}{\rightarrow} 0 \,, \qquad \nto\,,
\end{equation}
where $\|\cdot\|_2$ denotes the Euclidean norm and $\e_i$ is the $i$-th canonical basis vector of $\R^n$. Let us define the $n\times n$ matrix $\Y_n^{(0)}$ whose $i$-th row is $\mathrm{sign}(Y_{i k_i}) \e_{k_i}$, $i\in [n]$. By definition, $\Y_n^{(0)}$ has independent rows. We call any matrix having the same distribution as $\Y_n^{(0)}$ a \textit{one-hot} matrix. Moreover, one can derive the convergence in Frobenius norm.

\begin{lemma}[Convergence in Frobenius norm]
\label{lem:conv_frobenius}
    Assuming \eqref{eq:assumption_L}, we have the convergence 
    \begin{equation}
        \label{eq:conv_to_pm_frobenius}
        \|\Y_n - \Y_n^{(0)}\|_F \overset{\P}{\rightarrow} 0 \,, \qquad \nto\,.
    \end{equation}
\end{lemma}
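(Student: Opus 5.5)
We work row by row and reduce everything to the expectation in \eqref{eq:assumption_L}. Since both $\Y_n$ and $\Y_n^{(0)}$ have i.i.d.\ rows, we have
\begin{equation*}
    \E\|\Y_n - \Y_n^{(0)}\|_F^2 = \sum_{i=1}^n \E\|\y_i - \sign(Y_{ik_i})\e_{k_i}\|_2^2 = n\,\E\|\y_1 - \sign(Y_{1k_1})\e_{k_1}\|_2^2 .
\end{equation*}
It therefore suffices to show that $n\,\E\|\y_1 - \sign(Y_{1k_1})\e_{k_1}\|_2^2\to 0$. Once this holds, Markov's inequality applied to $\|\Y_n - \Y_n^{(0)}\|_F^2$ gives \eqref{eq:conv_to_pm_frobenius}.

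The main step is an exact computation for a single row, using that $\|\y_1\|_2=1$. Write $M_n=\max_j|Y_{1j}|=|Y_{1k_1}|$. Expanding the square gives
\begin{equation*}
    \|\y_1 - \sign(Y_{1k_1})\e_{k_1}\|_2^2 = \|\y_1\|_2^2 - 2\,\sign(Y_{1k_1})Y_{1k_1} + 1 = 2 - 2M_n = 2(1-M_n).
\end{equation*}
Taking expectations, we get
\begin{equation*}
    \E\|\Y_n - \Y_n^{(0)}\|_F^2 = 2n\big(1-\E[M_n]\big),
\end{equation*}
and this tends to $0$ precisely by assumption \eqref{eq:assumption_L}. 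Ties in the argmax occur with probability zero or can be broken arbitrarily, and they do not affect the identity above.

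There is no real obstacle here: the self-normalization $\|\y_i\|_2=1$ turns the squared distance into $2(1-M_n)$, and the hypothesis \eqref{eq:assumption_L} has been formulated exactly to make the sum over the $n$ rows vanish. The only point that needs checking is that the rows are identically distributed, so that the sum equals $n$ times the one-row quantity. This holds because the $X_{ij}$ are i.i.d.
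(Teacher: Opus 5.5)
Your proposal is correct and matches the paper's proof essentially verbatim. In both, the unit row norm turns each row's squared distance into $2(1-|Y_{ik_i}|)$, so $\E\|\Y_n-\Y_n^{(0)}\|_F^2 = 2n\bigl(1-\E[\max_j |Y_{1j}|]\bigr)\to 0$ by \eqref{eq:assumption_L}, and Markov's inequality applied to the squared Frobenius norm concludes.
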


In Figure~\ref{fig:prop29-spectral-comparison}, we overlay the spectrum of $\Y_n$ and the spectrum of $\Y_n^{(0)}$. For sufficiently heavy tails (corresponding to  sufficiently small $\beta$), condition \eqref{eq:assumption_L} is satisfied. This partially explains why the distinction between the spectra of $\Y_n$ and $\Y_n^{(0)}$ is more pronounced in the right part of the figure.

Lemma~\ref{lem:conv_frobenius} shows that the matrix $\Y_n$ behaves almost as a one-hot matrix, by having only one nonzero entry equal to $\pm 1$ on each row. We have the following result on one-hot matrices, which is proved in Section \ref{subsec:alpha_0}. Let us consider the spectral measure of the one-hot matrix $\Y_n^{(0)}$ given by
    \begin{equation*}
        \mu_n^{(0)} := \frac{1}{n} \sum_{i=1}^n \delta_{\lambda_i \left(\Y_n^{(0)}\right)} \,,
    \end{equation*}
where $\lambda_1\left(\Y_n^{(0)}\right),\ldots,\lambda_n\left(\Y_n^{(0)}\right)$ are the eigenvalues of $\Y_n^{(0)}$.

\begin{figure}[tb]
    \centering
    \includegraphics[width=0.9\linewidth]{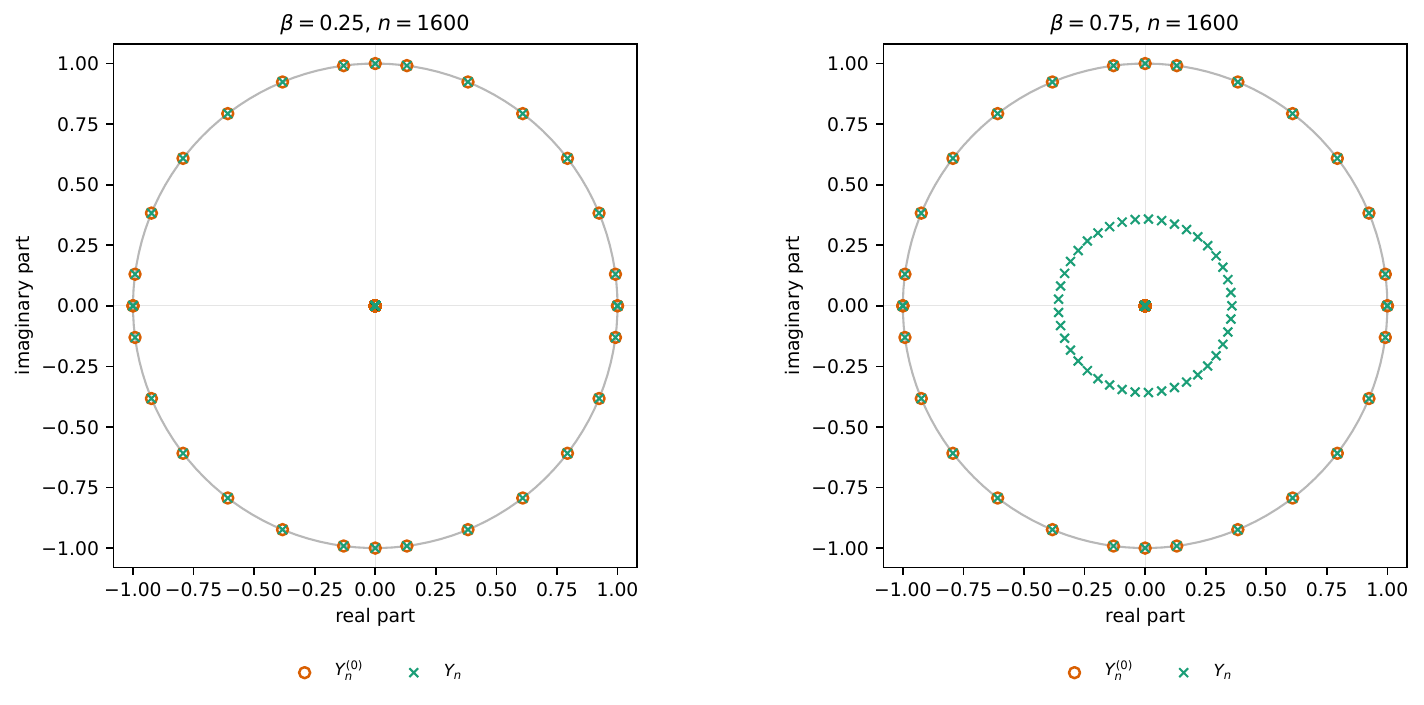}
    \caption{Spectral clouds for two slowly varying models with $n=1600$, where $\mathbb P(|X_{ij}|>x)=(\log x)^{-\beta}$. The left and right panels correspond to $\beta=0.25$ and $\beta=0.75$, respectively. For $\beta=0.25$, $98.06\%$ of the eigenvalues satisfy $|\lambda_j|<0.1$, while for $\beta=0.75$ the corresponding proportion is $95.50\%$.}
    \label{fig:prop29-spectral-comparison}
\end{figure}

\begin{proposition}[Convergence of the spectral measure for one-hot matrices] \label{prop:spectral_measure_one_hot}
   Let $\Y_n^{(0)}$ be a one-hot matrix. Then, its 
        spectral measure converges in probability to the Dirac measure at zero, 
   \begin{equation}
       \label{eq:conv_esd_one_hot_zero}
       \mu_n^{(0)} \cip \delta_0\,, \qquad \nto \,.
   \end{equation}
\end{proposition}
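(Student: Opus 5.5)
A one-hot matrix is the matrix of a random map with signs: $\Y_n^{(0)}$ has rows $\varepsilon_i \e_{\sigma(i)}$, where $\sigma(i)=k_i$ and $\varepsilon_i=\sign(Y_{ik_i})$. Because the rows of $\X_n$ are independent and the entries i.i.d., each $k_i$ is uniform on $[n]$ (ties occur with vanishing probability, or can be broken uniformly). Hence $\sigma:[n]\to[n]$ is a uniform random map, and the signs are i.i.d.\ symmetric and independent of $\sigma$. The plan is to compute the spectrum of $\Y_n^{(0)}$ exactly from the functional graph of $\sigma$.

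The first step is structural. Write $\Y_n^{(0)}=D P_\sigma$, where $P_\sigma$ sends $\e_i$ to $\e_{\sigma(i)}$ row-wise and $D$ is diagonal. The functional graph of $\sigma$ decomposes into cyclic nodes and trees hanging off the cycles. Reorder the coordinates so that the cyclic nodes come first. The subspace spanned by the tree (transient) coordinates is mapped by powers of $\Y_n^{(0)}$ toward the cyclic coordinates. Concretely, $(\Y_n^{(0)})^m\e_i$ (acting on the right) eventually lands on the cyclic part, or the matrix has block triangular form with a nilpotent block on the transient coordinates. Indeed, $(\Y^m)_{ij}\neq 0$ only if $\sigma^m(i)=j$, and the diagonal entries of the transient block vanish for all powers, since a transient node never returns to itself. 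So the characteristic polynomial factors as $\lambda^{T}\prod_{\text{cycles }C}(\lambda^{|C|}-s_C)$, where $T$ is the number of transient nodes and $s_C=\pm1$ is the product of the signs along $C$. Consequently the nonzero eigenvalues number exactly $n-T=|\mathcal C_n|$, the number of cyclic points of $\sigma$, and they all lie on the unit circle.

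The second step bounds the number of cyclic points. For a uniform random map this is classical: $\E|\mathcal C_n|=\sum_{m\ge1}\frac{n!}{(n-m)!\,n^m}\sim\sqrt{\pi n/2}$. This follows by summing over $m$ the probability that a given point lies on a cycle of length $m$, which is $\frac{n(n-1)\cdots(n-m+1)}{n^{m}}\cdot\frac{1}{n}$ times $n$ points. Hence $|\mathcal C_n|/n\to0$ in $L^1$, and therefore in probability.

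The conclusion is then immediate. We have $\mu_n^{(0)}=\frac{T}{n}\delta_0+\frac1n\sum_{\text{nonzero}}\delta_{\lambda}$, and the total variation distance to $\delta_0$ is at most $2|\mathcal C_n|/n\to0$ in probability. This yields weak convergence in probability.

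The main obstacle is the first step: justifying the exact factorization of the characteristic polynomial in the presence of the signs. One clean route is via traces: $\Tr[(\Y^{(0)})^m]=\sum_{i:\sigma^m(i)=i}\prod_{t<m}\varepsilon_{\sigma^t(i)}$ involves only cyclic points. The power sums of the eigenvalues therefore coincide with those of the cyclic block, which is a signed permutation matrix with eigenvalues the $|C|$-th roots of $s_C$. Newton's identities then determine the nonzero spectrum. Alternatively, one can simply observe the block-triangular form with a strictly triangular transient block, after ordering transient nodes by their distance to the cycles.
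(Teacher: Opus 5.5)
Your proposal is correct and follows essentially the same route as the paper: you decompose $\Y_n^{(0)}$ via its functional graph into a nilpotent block on the transient (tree) vertices and a signed permutation block on the cyclic vertices, and then combine $\E|\mathcal C_n|\sim\sqrt{\pi n/2}$ for the number of cyclic points of a uniform random map with a first-moment (Markov) bound. The only differences are cosmetic: you compute $\E|\mathcal C_n|$ directly from the sum $\sum_{m\ge 1} n!/((n-m)!\,n^m)$ instead of citing Flajolet--Odlyzko, and you conclude via total variation rather than by testing against a bounded continuous $f$.
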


We remark that the nonzero eigenvalues of $\Y_n^{(0)}$ are thus outliers of the limiting spectral distribution which is concentrated at zero. This is somewhat surprising since the singular value measure of $\Y_n$ for $\alpha=0$ remains non-degenerate in the limit. More precisely, from \cite{heiny:yao:2022} it is known that $\mu_{\Y_n\Y_n^\top}$ converges weakly to the Poisson distribution with parameter $1$. The comparison with one-hot matrices and the proof of Proposition~\ref{prop:spectral_measure_one_hot} suggest that the number of nonzero eigenvalues of $\Y_n$ asymptotically behaves like $\sqrt{\frac{n\pi}{2}}$. For those eigenvalues a different normalization will be needed to obtain a non-degenerate limit measure, which is a topic for future research.

\subsubsection{Gaussian behavior} \label{sec:gaussianbehavior}

\begin{figure}[tb]
    \centering
    \includegraphics[width=0.9\linewidth]{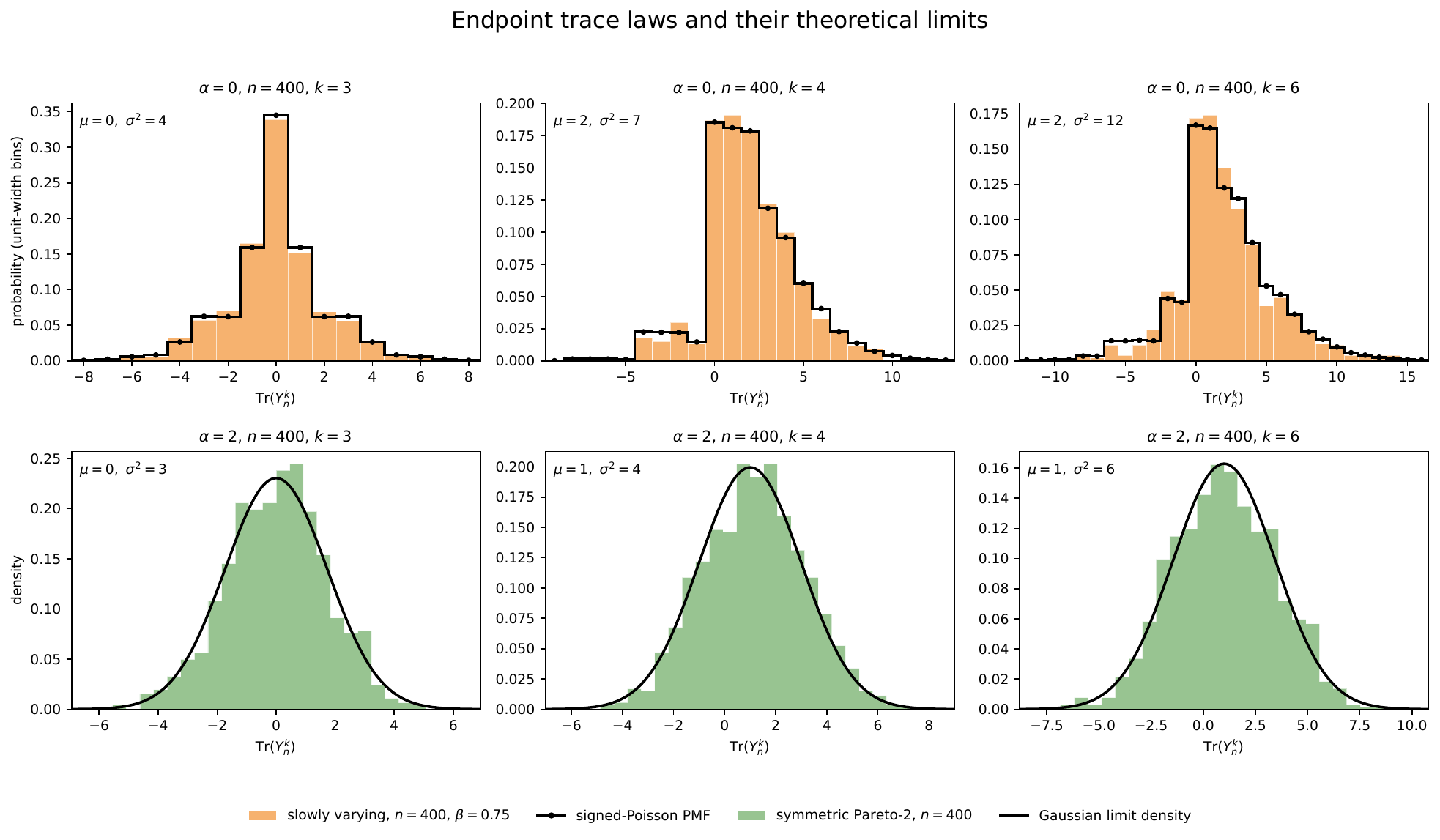}
    \caption{Illustration of Poisson limit in Theorem~\ref{thm:poisson} and the Gaussian limit in Theorem~\ref{thm:gaussian}. Trace distributions at the two endpoints $\alpha=0$ and $\alpha=2$ for $n=400$ and $k\in\{3,4,6\}$ are shown. The upper row compares the slowly varying model $\mathbb P(|X|>x)=(\log x)^{-\beta}$ with the exact signed-Poisson limit. The lower row compares Gaussian and symmetric Pareto$(2)$ entries after standardization by $(\Tr(\Y_n^k)-\1_{\{2\mid k\}})/\sqrt{k}$. The agreement provides a finite-size illustration of the Poisson-to-Gaussian transition.}
    \label{fig:trace-endpoints}
\end{figure}

\begin{theorem}[Phase transition at $\alpha=2$]\label{thm:gaussian}
Assume $X_{11} \in \DAN$. For integers $k_1,\ldots, k_\ell\ge 1$, we then have the convergence of traces
    \begin{equation}
        \left(\Tr \left[ \Y^{k_1}_n \right], \dots, \Tr \left[ \Y^{k_\ell}_n \right] \right) \cid \left(\1_{\{2\mid k_1\}}+\sqrt{k_1}Z_{k_1}',\dots,\1_{\{2\mid k_\ell\}}+\sqrt{k_\ell}Z_{k_\ell}'\right) \,,\qquad \nto\,,
    \end{equation}
    and the limiting characteristic polynomial is  
    \begin{equation}
    \label{eq:limit_gaussian_case}
        p_n(z) \cid  
        \sqrt{1 - z^{2}} \cdot \exp \left( - \sum_{k \geqslant 1} 
        \frac{z^k}{\sqrt{k}} Z_k' \right) \,, \qquad \nto,\, z\in \mathbb{D}\,,
    \end{equation}
    where $(Z_k')_{k \geqslant 1}$ are i.i.d.\ standard Gaussian random variables. 
\end{theorem}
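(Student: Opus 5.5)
We follow the scheme of \cite{Bordenave_Chafai_Garcia}. The proof has three parts: a moment computation for the traces, a Gaussian CLT, and tightness of $\log p_n$. We write
\begin{equation*}
\Tr[\Y_n^k]=\sum_{i_1,\dots,i_k} Y_{i_1i_2}Y_{i_2i_3}\cdots Y_{i_ki_1}
\end{equation*}
and expand this as a sum over closed walks of length $k$ on $[n]$.

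\emph{Reduction to finite-dimensional moments.} Since the entries are symmetric and the rows are independent, the sign vector of each row is independent of the row's absolute values. A walk therefore contributes in expectation only if every directed edge $(i,j)$ is traversed an even number of times. We will need the mixed moments of a single row, $\E[Y_{11}^{2a_1}\cdots Y_{1r}^{2a_r}]$.

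\emph{Moment inputs from $X_{11}\in\DAN$.}
\begin{itemize}
\item The row is exchangeable with $\sum_j Y_{1j}^2=1$, so $\E[Y_{11}^2]=1/n$ and $\E[Y_{11}^2Y_{12}^2]\le 1/(n(n-1))$.
\item For $X_{11}\in\DAN$ we have $n\max_j Y_{1j}^2\cdot(\text{slowly varying})\to 0$ and $n\,\E[Y_{11}^4]\to0$. This is the classical self-normalized statement of Giné--Götze--Mason: the self-normalized sum is asymptotically normal with all moments converging. Consequently $n^2\E[Y_{11}^2Y_{12}^2]\to1$, and more generally $n^{a}\E[\prod Y_{1j}^{2}]\to 1$ for $a$ distinct indices each with exponent 2. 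Every higher even moment is $o$ of the corresponding power of $1/n$.
\end{itemize}

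\emph{Walk counting.} Fix $k$ and classify the even walks by their number of distinct vertices and edges. We aim to show that $\E[(\Tr\Y_n^k-\1_{\{2\mid k\}})^2]\to k$ and that joint moments converge to those of the claimed Gaussian limit.
\begin{itemize}
\item For even $k$, the mean $\1_{\{2\mid k\}}$ comes from walks of the form $i\to j\to i\to\cdots$ that use the edges $(i,j)$ and $(j,i)$, each $k/2$ times. Because $\E[Y_{ij}^2]=1/n$ exactly, the term with $k=2$ gives $\sum_{i,j}\E[Y_{ij}Y_{ji}]$. Here one must be careful: the sign symmetry kills $Y_{ij}Y_{ji}$ for $i\neq j$, so only the diagonal $i=j$ survives, giving $\sum_i\E[Y_{ii}^2]=1$.
\item In general the surviving leading term is a walk that is a $k/2$-fold repetition of a loop of length 2 collapsing appropriately. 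Making this bookkeeping precise is the first technical step.
\item The fluctuations come from cycles of length $k$ with distinct vertices. By rotation these pair with each other in $k$ ways, which produces the variance $k$.
\item All remaining walk shapes carry a factor $o(1)$, by the moment bounds above.
\end{itemize}
Mixed moments of several traces are handled the same way, via Wick-type pairings of cycles. This yields the Gaussian limit $\1_{\{2\mid k\}}+\sqrt k Z_k'$ jointly in $k$ by the method of moments. Each row appears in a walk with bounded multiplicity, so independence across rows together with the uniform moment bounds justifies the expansion.

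\emph{Characteristic polynomial.} For $|z|$ small we use $\log p_n(z)=-\sum_k \frac{z^k}{k}\Tr[\Y_n^k]$. Convergence of traces gives convergence of the coefficients of $p_n$, since each coefficient is a polynomial in finitely many traces via the Newton identities. The limit is then
\begin{equation*}
\exp\Big(-\sum_k\frac{z^{2k}}{2k}-\sum_k\frac{z^k}{\sqrt k}Z_k'\Big)=\sqrt{1-z^2}\,\exp\Big(-\sum_k\frac{z^k}{\sqrt k}Z_k'\Big).
\end{equation*}
To upgrade coefficient convergence to local uniform convergence on $\D$, it suffices to show tightness. For this we use $\E|p_n(z)|^2$: expanding $\det$ as a sum over partial permutations, independence and the sign symmetry give
\begin{equation*}
\E|p_n(z)|^2=\sum_m|z|^{2m}\sum_{|S|=m}\E\Big[\sum_{\sigma}\prod_{i\in S}Y_{i\sigma(i)}^2\Big]\le \sum_m |z|^{2m} C,
\end{equation*}
using row exchangeability and $\sum_jY_{ij}^2=1$. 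This bound is uniform on compact subsets of $\D$, so Montel's theorem gives tightness, and together with coefficient convergence it identifies the limit.

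\emph{Main obstacle.} The hardest step is the combinatorial control of all non-leading walks. The rows are dependent within themselves, so we have no moment bound of the form $\E|Y_{ij}|^{2a}\le C n^{-a}$ with a uniform $C$; instead we only know that high moments are $o(n^{-1})$, with a rate that is slowly varying. We first try to bound each walk by the product of its row-moment factors, and show that any extra edge multiplicity costs a factor $n\E[Y_{11}^4]\to0$, while the vertex count is bounded by the Euler-type count (vertices $\le$ edges). Balancing these two losses is the step that will require the most care.
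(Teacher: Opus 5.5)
The step that fails as written is your identification of where the deterministic part $\1_{\{2\mid k\}}$ comes from.

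A walk $i\to j\to i\to\cdots$ that uses $(i,j)$ and $(j,i)$ each $k/2$ times has weight $Y_{ij}^{k/2}Y_{ji}^{k/2}$.
\begin{itemize}
\item For $k=6$ the exponents are odd, so these walks vanish in expectation for $i\neq j$. The collapsed walks $i=j$ give $n\E[Y_{11}^6]\to0$.
\item For $k=8$ they give $n(n-1)\E[Y_{11}^4]^2\sim(n\E[Y_{11}^4])^2\to0$.
\end{itemize}
Carried out as you describe it, the bookkeeping would therefore produce $\E\Tr[\Y_n^6]\to0$ instead of $1$. Your picture is right only for $k=2,4$, where it happens to coincide with the correct one.

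For $k=2m$ the leading walks go \emph{twice} around a directed cycle on $m$ distinct vertices, $i_1\to\cdots\to i_m\to i_1\to\cdots\to i_m\to i_1$. Every directed edge then has multiplicity exactly $2$, and every row contributes exactly $\E[Y_{11}^2]=1/n$. The total is $n(n-1)\cdots(n-m+1)\,n^{-m}\to1$. In the paper this is the term $\ell=1$ of $\tilde c^{(2)}_{2m}=\sum_{\ell\mid m}(c^{(2)}_\ell)^{m/\ell}$ in Lemma~\ref{lemma:mean_convergence}. Every other term contains some $c^{(2)}_\ell=\lim n\E[Y_{11}^{2\ell}]=0$ with $\ell\ge2$.

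With this corrected, your plan is the paper's, and the ``main obstacle'' is not delicate.

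Your moment input is right. By symmetry, $\E[(\sum_jY_{1j})^4]=3-2n\E[Y_{11}^4]$, so fourth-moment convergence of the self-normalized sum is exactly $n\E[Y_{11}^4]\to0$. Your claim about $n\max_jY_{1j}^2$ is false as written, since $n\max_jY_{1j}^2\ge1$; the correct DAN statement is $\max_jY_{1j}^2\cip0$. It is unused, so drop it.

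The uniform bound you say you lack is available, and it is all you need:
\begin{itemize}
\item Expanding $1=\E[(\sum_jY_{1j}^2)^K]$ multinomially shows that an even mixed moment of one row over $r$ distinct columns is $O(n^{-r})$, whatever the exponents (Lemma~\ref{lem:allmoments}(a)).
\item Hence each walk shape contributes $O(n^{|V|-\sum_v d_v})$, where $d_v\ge1$ is the number of distinct out-neighbours of $v$. So only shapes in which every vertex has out-degree one survive. The centering is handled as in the proof of Lemma~\ref{lem:product_formula}.
\item In such a shape each closed walk winds an integer number of times around a single cycle. Any edge of multiplicity $\ge4$ costs a factor $n\E[Y_{11}^4]\to0$.
\item After centering, walks winding twice around their own cycle drop out. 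What remains are cycles covered by exactly two walks of the same length $k$, each winding once, in $k$ relative rotations.
\end{itemize}
These are Wick pairings with covariance $k\,\1_{\{k=\ell\}}$. The paper reaches the same structure by setting $c^{(2)}_j=0$ for $j\ge2$ in its general product formula, which kills all blocks of size $\ge3$.

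Finally, what you prove and need is tightness of $p_n$, not of $\log p_n$. Your bound equals $\sum_m|z|^{2m}\binom{n}{m}m!\,n^{-m}\le(1-|z|^2)^{-1}$, exactly as in the paper.
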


The limiting function in \eqref{eq:limit_gaussian_case} is the real analog of the \textit{Gaussian multiplicative chaos} \cite{Rhodes_Vargas}, which appeared as limit for the characteristic polynomial of i.i.d.\ matrices \cite{Bordenave_Chafai_Garcia} and circular $\beta$-ensembles~\cite{Chhaibi_Najnudel}.

Figure~\ref{fig:trace-endpoints} illustrates the two boundary regimes. At $\alpha=0$, the finite-$n$ slowly varying model is compared with the exact signed-Poisson limit of Theorem~\ref{thm:poisson}. An analogous comparison based on Theorem~\ref{thm:gaussian} is shown for $\alpha=2$.

\subsection{Open questions}

\begin{itemize}
    \item Let us consider the spectral measure of $\Y_n$ given by $\mu_{\Y_n} = \tfrac{1}{n} \sum_{i=1}^n \delta_{\lambda_i(\Y_n)}$. We saw in Proposition~\ref{prop:spectral_measure_one_hot} that for one-hot matrices, one has that 
    \begin{equation*}
        \mu_{\Y_n^{(0)}} \cip \delta_0 \,.
    \end{equation*}
    It is an open problem to extend this convergence to $\mu_{\Y_n}$ for every $\alpha \in [0,2)$.  Simulations suggest that the limit measure would still be the Dirac mass at zero. 
  For $\alpha=0$, using the proximity \eqref{eq:conv_to_pm_row} and 
    \eqref{eq:conv_to_pm_frobenius} between $\Y_n$ and one-hot matrices would be a first step towards the result for $\alpha \in [0,2)$. 
    \item Corollary \ref{cor:upper_bound_spectral_radius} provides an upper bound for the spectral radius. Together with a lower bound, one could achieve convergence of the spectral radius, which we conjecture to converge to one.
 \end{itemize}
 
\section{Proof of Theorem \ref{th:conv_carac_poly}}
\label{sec:proof_main_thm}

The goal of this section is to give the necessary material to prove Theorem \ref{th:conv_carac_poly}. Recall that $\mathcal{H}(\D)$ is the space of holomorphic functions on the unit disk $\D$ endowed with the topology of uniform convergence on compact sets. The proof of Theorem \ref{th:conv_carac_poly} follows from the general result of Lemma~\ref{lem:conditions_a_b} below. For its proof, we refer the reader to Section 4.2 in \cite{Bordenave_Chafai_Garcia}.

\begin{lemma}[Tightness and convergence of coefficients imply convergence of functions]
\label{lem:conditions_a_b}
	Let $\{h_n \}_{n \geqslant 1}$ be a sequence of random elements in $\mathcal{H}(\D)$ and denote the coefficients of $h_n$ by $(\xi_k^{(n)} )_{k \geqslant 0}$ so that for all $z \in \D$, $h_n(z) = \sum_{k \geqslant 0} \xi_k^{(n)} z^k$. Suppose also that the following conditions hold.
	\begin{itemize}
		\item[$(a)$] The sequence $\{h_n \}_{n \geqslant 1}$ is
		a tight sequence of random elements of $\mathcal{H}(\D)$.
		\item[$(b)$] There exists a sequence $(\xi_k)_{k \geqslant 0}$ of random variables such that, for every $m \geqslant 0$, \\
		$\left(\xi_0^{(n)}, \dots, \xi_m^{(n)}\right) \cid (\xi_0, \dots, \xi_m)$.
	\end{itemize}
	Then, $h(z) = \sum_{k \geqslant 0} \xi_k z^k$ is a well-defined function in $\mathcal{H}(\D)$ and $h_n \cid h$ in $\mathcal{H}(\D)$ for the topology of local uniform convergence.
\end{lemma}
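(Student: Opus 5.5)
Since $\mathcal{H}(\D)$ with the topology of local uniform convergence is a Polish space (it is metrizable by $d(f,g)=\sum_j 2^{-j}\min(1,\sup_{|z|\le 1-1/j}|f-g|)$, complete and separable), we use the standard ``tightness plus identification of the limit'' argument. By $(a)$ and Prokhorov's theorem, every subsequence of $(h_n)$ has a further subsequence $(h_{n'})$ converging in distribution in $\mathcal{H}(\D)$ to some random element $\tilde h$. It therefore suffices to show two things: every such subsequential limit has the same law, and this law is that of $h=\sum_k \xi_k z^k$.

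The first step is continuity of the coefficient maps. For $m\ge 0$, consider $\pi_m:\mathcal{H}(\D)\to\C^{m+1}$, $f\mapsto (f^{(k)}(0)/k!)_{k\le m}$. Cauchy's formula on the circle $|z|=1/2$ gives
\begin{equation*}
|f^{(k)}(0)/k!-g^{(k)}(0)/k!|\le 2^k\sup_{|z|=1/2}|f-g|,
\end{equation*}
so $\pi_m$ is continuous. By the continuous mapping theorem, $\pi_m(h_{n'})\cid \pi_m(\tilde h)$. Combined with $(b)$, this shows that $(\tilde\xi_0,\dots,\tilde\xi_m)$, the first Taylor coefficients of $\tilde h$, has the law of $(\xi_0,\dots,\xi_m)$ for every $m$. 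By Kolmogorov's extension theorem, the whole sequence $(\tilde\xi_k)_{k\ge0}$ then has the same law as $(\xi_k)_{k\ge0}$ on $\C^{\N}$.

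The second step, which I expect to be the only delicate point, is to show that the law of a random element of $\mathcal{H}(\D)$ is determined by the law of its coefficient sequence. Consider the map $\Phi:\mathcal{H}(\D)\to\C^{\N}$, $f\mapsto(f^{(k)}(0)/k!)_k$. It is continuous and injective, and its image is the set
\begin{equation*}
A=\{(a_k): \limsup_k |a_k|^{1/k}\le1\},
\end{equation*}
which is a Borel set. Its inverse on $A$, namely $(a_k)\mapsto\sum_k a_kz^k$, is Borel measurable: on each closed disk $|z|\le r<1$, the sup norm of $f-\sum_{k\le K}a_kz^k$ is a measurable function of $(a_k)$ that tends to $0$ as $K\to\infty$. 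Hence $\tilde h=\Phi^{-1}(\Phi(\tilde h))$, and the law of $\tilde h$ is the push-forward of the law of $(\tilde\xi_k)=(\xi_k)$ in distribution; in particular this law does not depend on the subsequence. Alternatively, one can avoid the inverse map: the sets $\{f:\pi_m(f)\in B\}$ form a $\pi$-system, and they generate the Borel $\sigma$-field of $\mathcal{H}(\D)$, because each evaluation $f\mapsto f(z)=\lim_K\sum_{k\le K}a_kz^k$ is measurable with respect to them, and the evaluations at a countable dense set of points generate the Borel sets of this Polish space.

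Finally, we check that $h$ is well defined. Since $\P[(\xi_k)\in A]=\P[(\tilde\xi_k)\in A]=1$, the series $h(z)=\sum_k\xi_kz^k$ has radius of convergence at least one almost surely, so $h$ is a random element of $\mathcal{H}(\D)$. Moreover $h=\Phi^{-1}((\xi_k))\eid\Phi^{-1}((\tilde\xi_k))=\tilde h$. Thus every subsequence of $(h_n)$ has a further subsequence converging in distribution to $h$, and therefore $h_n\cid h$ in $\mathcal{H}(\D)$.
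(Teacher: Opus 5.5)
Your proof is correct. It is also the standard argument the paper relies on: the paper gives no proof of its own and defers to Section~4.2 of \cite{Bordenave_Chafai_Garcia}, which uses the same scheme. That scheme is Prokhorov's theorem in the Polish space $\mathcal{H}(\D)$, continuity of the Taylor-coefficient maps via Cauchy estimates, identification of every subsequential limit through the law of its coefficient sequence, and the subsequence principle.

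One cosmetic point: the equality of the laws of $(\tilde\xi_k)_k$ and $(\xi_k)_k$ on $\C^{\N}$ does not come from the existence part of Kolmogorov's extension theorem. It comes from the uniqueness part, i.e. a $\pi$--$\lambda$ argument on cylinder sets.
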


\noindent
The first step of the proof of Theorem~\ref{th:conv_carac_poly} is therefore to show the following result.
\begin{proposition}[Tightness]
    \label{thm:tightness}
    The sequence $\{ p_n \}_{n \geqslant 1}$ is tight.
\end{proposition}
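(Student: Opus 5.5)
The plan is to follow the approach of \cite{Bordenave_Chafai_Garcia}. Tightness in $\mathcal{H}(\D)$ follows once, for each compact $K\subset\D$, $\sup_{z\in K}|p_n(z)|$ is tight. A sufficient condition is a uniform second moment bound on the coefficients. Write $p_n(z)=\sum_{k=0}^n (-1)^k z^k \xi_k^{(n)}$, where $\xi_k^{(n)}$ is the sum of all principal $k\times k$ minors of $\Y_n$. If we show
\[
\sup_{n}\E\bigl|\xi_k^{(n)}\bigr|^2 \le C_k \quad\text{with } \limsup_k C_k^{1/k}\le 1,
\]
for instance $C_k\le 1$ or $C_k$ polynomial in $k$, then for $|z|\le r<1$ we get $\E\sup_{|z|\le r}|p_n(z)|\le \sum_k r^k \sqrt{C_k}<\infty$ uniformly in $n$. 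Markov's inequality plus Montel's theorem then give tightness.

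The core step is to compute $\E|\det(\Y_{I})|^2$ for a principal submatrix indexed by $I\subset[n]$ with $|I|=k$. Expand
\[
\sum_{|I|=k}\det \Y_I=\sum_{|I|=k}\sum_{\sigma\in S_I}\mathrm{sgn}(\sigma)\prod_{i\in I}Y_{i\sigma(i)}
\]
and compute the second moment by pairing two such terms $(I,\sigma)$ and $(J,\tau)$.

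Symmetry is the key input. Within each row $i$, the vector $(Y_{i1},\dots,Y_{in})$ is invariant under independent sign flips of its coordinates, and rows are independent. So the expectation of $\prod_{i\in I}Y_{i\sigma(i)}\prod_{j\in J}Y_{j\tau(j)}$ vanishes unless every entry $Y_{ab}$ appears an even number of times. Since each row contributes at most one factor from each term, this forces $I=J$ and $\sigma=\tau$. Hence
\[
\E|\xi_k^{(n)}|^2=\sum_{|I|=k}\sum_{\sigma\in S_I}\E\prod_{i\in I}Y_{i\sigma(i)}^2 .
\]
The rows are independent and $\sigma$ is a bijection, so this factorizes as $\prod_{i\in I}\E Y_{i\sigma(i)}^2$. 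By exchangeability and $\sum_j Y_{ij}^2=1$, each factor equals $1/n$. Therefore
\[
\E|\xi_k^{(n)}|^2=\binom{n}{k}\,k!\,n^{-k}\le 1 .
\]

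This bound needs no moment assumptions, since the $Y_{ij}$ are bounded by $1$, and works for every symmetric distribution. Working with complex $z$ causes no issue, because we bound $\E|\xi_k|^2$ directly.

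Plugging in, for $|z|\le r<1$ we get $\E|p_n(z)|^2\le\sum_k r^{2k}=(1-r^2)^{-1}$. Then $\E\sup_{|z|\le r}|p_n(z)|\le \sum_k r^k=(1-r)^{-1}$ uniformly in $n$, which is exactly the tightness criterion (a) of Lemma~\ref{lem:conditions_a_b}.

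The main obstacle, though a mild one, is justifying the cancellation of off-diagonal pairs rigorously. The sign-flip invariance must be applied to the normalized vector, which is valid because the normalizing norm is invariant under flipping coordinate signs. One also has to check the parity argument carefully when $I\neq J$ and rows appear once in one product but not the other: such a row contributes a single factor $Y_{ab}$, whose conditional expectation vanishes by flipping the sign of column $b$ in that row.
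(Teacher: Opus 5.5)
Your proof is correct and follows the paper's argument. Both expand $p_n$ in sums of principal minors, use row independence and sign-flip symmetry to kill every cross term with $(I,\sigma)\neq(J,\tau)$, and obtain $\E|\Delta_k(\Y_n)|^2=\binom{n}{k}k!\,n^{-k}\le 1$. The only difference is the final step: the paper invokes Lemma~2.6 of \cite{Shirai} via the pointwise bound $\E|p_n(z)|^2\le (1-|z|^2)^{-1}$, whereas you bound $\E\sup_{|z|\le r}|p_n(z)|\le (1-r)^{-1}$ directly and conclude with Markov and Montel, which is equally valid and self-contained.
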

\begin{proof}
By Lemma 2.6 of \cite{Shirai}, a sufficient criterion to derive the tightness of a sequence $(p_n)_n$ of holomorphic functions on $\D$ is to bound the function
$z \mapsto \E \left[ |p_n(z)|^2 \right]$ by a deterministic continuous function on any compact subset of $\D$.
Let us write 
\begin{equation*}
    p_n(z) = 1 + \sum_{k=1}^n z^k \Delta_k(\Y_n) \,,
\end{equation*}
where for $k \leqslant n$, 
\begin{equation*}
    \Delta_k(\Y_n) = \sum_{I \subseteq [n] \, : \, |I| = k}
    \det_I(\Y_n) \quad \text {with } \quad \det_I(\Y_n) \coloneqq \sum_{\sigma \in S_I} (-1)^\sigma \prod_{i \in I} Y_{i, \sigma(i)}
\end{equation*}
and where $S_I$ denotes the symmetric group on $I \subseteq [n]$. Using row independence in the matrix $\Y_n$, we have that for $I \neq J$, $\E \left[ \det_I(\Y_n) \det_J(\Y_n) \right] = 0$. Moreover, using the symmetry of the distribution of the $(X_{ij})$ and the identically distributed property, we get that $\E \left[\det_I(\Y_n)^2 \right] = |I|! \prod_{i \in I} \E \left[ Y_{11}^2 \right] = (|I|!) n^{-|I|}$. Tightness therefore follows from
\begin{equation*}
    \E \left[ |p_n(z)|^2 \right] = 1+\sum_{k=0}^n |z|^{2k} \E[(\Delta_k(\Y_n))^2] = \sum_{k=0}^n |z|^{2k} n^{-k} k! \binom{n}{k} 
    \leqslant \sum_{k \geqslant 0} |z|^{2k} = \frac{1}{1 - |z|^2} \,.
\end{equation*}
\end{proof}

The second step of the proof of Theorem~\ref{th:conv_carac_poly} consists in showing the convergence in distribution of the coefficients $\big(\xi_k^{(n)} \big)_{0 \leqslant k \leqslant n}$ of $p_n$. Using that for $z \in \D$ close enough to zero,
\begin{equation*}
    p_n(z) = \exp \left( - \sum_{k \geqslant 1} \frac{z^k}{k} \Tr \left[ \Y_n^k \right] \right) 
\end{equation*}
yields that coefficients $\big(\xi_k^{(n)} \big)_{0 \leqslant k \leqslant n}$ 
are polynomials (which do not depend on $n$) of the traces $\left(\Tr \left[ \Y_n^k \right] \right)_{0 \leqslant k \leqslant n}$. Therefore, showing the convergence in distribution of the coefficients of $p_n$ is equivalent to showing the convergence in distribution of traces. The latter convergence is stated in Theorem~\ref{thm:conv_traces}.

Using Theorem \ref{thm:tightness} and \ref{thm:conv_traces}, we have verified the conditions of Lemma \ref{lem:conditions_a_b} which now yields the desired result of  Theorem~\ref{th:conv_carac_poly} and completes the proof.

\section{Convergence of traces: proof of Theorem \ref{thm:conv_traces}} \label{sec:conv_traces}

The goal of this section is to prove Theorem \ref{thm:conv_traces}. After introducing a general convergence result for moments of entries of $\Y_n$ in Section \ref{subsec:moment_convergence_toolbox}, we first show the convergence of the expectations 
\begin{equation*}
    \left(\E \, \Tr \big[ \Y_n^k \big] \right)_{k \geqslant 1}
\end{equation*}
in Section \ref{subsec:conv_expectation}. In Section \ref{subsec:radnom_contribution}, we then consider the centered random variables 
\begin{equation*}
    \left( \Tr \big[ \Y_n^k \big] - \E \, \Tr \big[ \Y_n^k \big] \right)_{k \geqslant 1}
\end{equation*}
and we show that they converge in distribution using the method of moments (see Proposition \ref{prop:moment_convergence}). Based on the limiting moments in Proposition \ref{prop:moment_convergence}, we finally give the proof of Theorem \ref{thm:conv_traces}.
\medskip

First, we need to introduce some useful notation. We start by writing
\begin{equation*}
    \Tr \left[ \Y_n^k \right] = \sum_{i_1, \dots, i_k=1}^n Y_{i_1i_2} Y_{i_2i_3} \dots 
    Y_{i_ki_1} =: \sum_{\mathbf{i}: [k] \to [n]} Y_\mathbf{i} \,.
\end{equation*}

\noindent
One can think of this sum as a sum over all paths $\mathbf{i}=(i_1, \dots i_k)$ of length $k$ with coefficient $Y_{i_ji_{j+1}}$ associated to the $j$-th edge. Note that from row independence, if $C(\mathbf{i}) \coloneqq \left\{ i_j: j \in [k] \right\}$ has cardinality $k$ then all coefficients in the sum are independent and therefore $\E[Y_\mathbf{i}] = 0$. Such terms will contribute to the random part of the limit. 
\noindent
To any $\mathbf{i} = (i_1, \dots i_k)$, we associate a directed graph $G_\mathbf{i} = (V_\mathbf{i}, E_\mathbf{i})$ with vertex set $V_\mathbf{i} = \{ i_j: j \in [k] \}$ and edge set $E_\mathbf{i} = \bigsqcup_{j \in [k]} (i_{j}, i_{j+1})$. Note that there might be multiple copies of an edge between two vertices in $G_\mathbf{i}$. Finally, for a vertex $v\in V_{\mathbf{i}}$ we define its outer degree $d_v = \sum_{u \in V_{\mathbf{i}}} \1_{(v, u) \in E_\mathbf{i}}\ge 1$.

\subsection{Moments of entries in $\Y_n$}
\label{subsec:moment_convergence_toolbox}

Unless explicitly stated otherwise, the $(X_{it})$ are i.i.d.\ and symmetrically distributed throughout this paper, which implies that the $Y_{it}$ are symmetric as well. The following properties of the matrix $\Y=(Y_{it})$ will be repeatedly used in this paper.
\begin{enumerate}
\item[(1)] By symmetry of the entry distribution we have for $s\le n$ that $\E [Y_{i1}^{m_1}\cdots Y_{is}^{m_s}]=0$ if at least one exponent $m_j\in\N$ is odd.
\item[(2)] $\Y$ has independent rows.
\item[(3)] By definition, $\sum_{t=1}^n Y^2_{it}=1$ for each row $i$.  
\end{enumerate}
Auxiliary results about the mixed moments of $Y_{ij}$'s will be crucial in our analysis.
To this end, define for all positive integers $k_1,\ldots, k_r$ 
\begin{equation*}
    \beta_{2k_1,\ldots, {2k_r}}:=\E \left[Y_{11}^{2k_1}Y_{12}^{2k_2} \cdots Y_{1r}^{2k_r} \right],    
\end{equation*}
where we recall the definition of $Y_{ij}$ from \eqref{def:R}. We introduce a set of positive constants which will be used throughout this paper:
\begin{equation}\label{eq:c_k-def}
    c_1^{(\alpha)}:=1 \quad \text{ and } \quad c_k^{(\alpha)}:=
    \begin{cases}
        \displaystyle
        \frac{\Gamma(k-\alpha/2)}{\Gamma(1-\alpha/2)\Gamma(k)}\,,& \alpha\in [0,2),\\
        \displaystyle
        0\,,& \alpha=2,
    \end{cases} 
\qquad  k\ge 2.
\end{equation}
The following key lemma reveals the asymptotic behavior of $\beta_{2k_1,\ldots, {2k_r}}$.  

\begin{lemma}[Moment asymptotics]
\label{lem:allmoments}
Consider integers $k_1\ge\cdots \ge k_r\ge 1$ and let $\Gamma(\cdot)$ denote the gamma function. 
\begin{itemize}
\item[(a)] For any distribution of $X_{11}$, it holds $\beta_2=1/n$ and
\begin{equation*}
n^r  \beta_{2k_1,\ldots, {2k_r}}=O(1)\,, \qquad \nto\,.
\end{equation*}
If additionally $k_1\ge 2$, then $n^r  \beta_{2k_1,\ldots, {2k_r}}=O(n\beta_4)$.
\item[(b)] If $\alpha\in [0,2)$ and $\P(|X_{11}|>x)=x^{-\alpha} L(x)$ for $x>0$, where $L$ is a slowly varying function at infinity, then it holds
\begin{equation}\label{highestmoment}
\lim_{\nto} n \beta_{2k} = 
\frac{\Gamma(k-\alpha/2)}{\Gamma(1-\alpha/2) \Gamma(k)} = c_k^{(\alpha)} \,, \qquad k\ge 2\,.
\end{equation}
\item[(c)] If $X_{11}\in \DAN$, then
\begin{equation}\label{eq:highestmoment}
\lim_{\nto} n \beta_{2k} = 
0 = c_k^{(2)} \,, \qquad k\ge 2\,.
\end{equation}
\end{itemize}
\end{lemma}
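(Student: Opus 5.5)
Part (a) is elementary and I would do it first. Write $S=X_{11}^2+\cdots+X_{1n}^2$. By exchangeability and property (3), $1=\E\sum_t Y_{1t}^2 = n\beta_2$, so $\beta_2=1/n$.

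For the $O(1)$ bound, expand $1=(\sum_t Y_{1t}^2)^{K}$ with $K=k_1+\cdots+k_r$. The multinomial expansion contains the term $Y_{1t_1}^{2k_1}\cdots Y_{1t_r}^{2k_r}$ for every ordered $r$-tuple of distinct indices, of which there are $n(n-1)\cdots(n-r+1)\sim n^r$. All terms are nonnegative, so taking expectations and using exchangeability gives
\[
\binom{K}{k_1,\ldots,k_r}\, n(n-1)\cdots(n-r+1)\,\beta_{2k_1,\ldots,2k_r}\le 1,
\]
hence $n^r\beta_{2k_1,\ldots,2k_r}=O(1)$.

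For the refined bound when $k_1\ge 2$, use $Y_{1t}^{2k}\le Y_{1t}^4$ for $k\ge2$ (since $|Y_{1t}|\le1$). This reduces the claim to bounding $n^r\E[Y_{11}^4 Y_{12}^{2k_2}\cdots Y_{1r}^{2k_r}]$. Apply the same expansion trick to the remaining factors: multiply $Y_{11}^4$ by $(\sum_{t\ne1}Y_{1t}^2)^{k_2+\cdots+k_r}\le 1$ and use exchangeability among the indices $t\ne 1$. This yields $(n-1)\cdots(n-r+1)\,\E[Y_{11}^4Y_{12}^{2k_2}\cdots]\lesssim \E[Y_{11}^4]=\beta_4$. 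Multiplying by $n$ gives $O(n\beta_4)$.

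For (b) and (c) I would use the Laplace/Gamma representation: for $k\ge1$, $S^{-k}=\Gamma(k)^{-1}\int_0^\infty s^{k-1}e^{-sS}\dint s$. By independence this gives
\[
n\beta_{2k}=\frac{n}{\Gamma(k)}\int_0^\infty s^{k-1}\,\E\big[X^{2k}e^{-sX^2}\big]\,\varphi(s)^{n-1}\dint s,\qquad \varphi(s)=\E e^{-sX^2}.
\]
Since $X^2$ is regularly varying with index $\alpha/2<1$, Karamata/Tauberian arguments give $1-\varphi(s)\sim \Gamma(1-\alpha/2)\,s^{\alpha/2}L(s^{-1/2})$ as $s\downarrow0$. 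Also $\E[X^{2k}e^{-sX^2}]=(-1)^k\varphi^{(k)}(s)\sim \Gamma(1-\alpha/2)\,\frac{\alpha}{2}(1-\tfrac{\alpha}{2})\cdots\,s^{\alpha/2-k}L(s^{-1/2})$, obtained by differentiating the regularly varying tail via monotone density.

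Substitute $s=s_n t$ with $n(1-\varphi(s_n))=1$. Then $\varphi(s_nt)^{n-1}\to e^{-t^{\alpha/2}}$ and $n s_n^k\E[X^{2k}e^{-s_ntX^2}]\to C\,t^{\alpha/2-k}$. The integral becomes
\[
\frac{C}{\Gamma(k)}\int_0^\infty t^{\alpha/2-1}e^{-t^{\alpha/2}}\dint t=\frac{C}{\Gamma(k)}\cdot\frac{2}{\alpha},
\]
and after simplifying the constant $C=\Gamma(k-\alpha/2)\alpha/(2\Gamma(1-\alpha/2))$ this equals $c_k^{(\alpha)}$. (Alternatively I would cite the known result of Albrecher–Teugels type on moments of self-normalized ratios.)

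The case $\alpha=0$ fits the same computation with $\varphi(s)^{n-1}$ degenerating appropriately, though I would check it separately. For (c), $X\in\DAN$ means $\max_t X_{1t}^2/S\cip0$. Since $n\beta_{2k}\le n\beta_4\le\E[\max_tY_{1t}^2\cdot\sum_tY_{1t}^2]=\E\max_tY_{1t}^2$, which is bounded and tends to $0$, bounded convergence concludes.

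The main obstacle is justifying dominated convergence in (b) uniformly over $t$ near $0$ and $\infty$. For large $t$ I would use Potter bounds together with $\varphi^{n-1}\le e^{-(n-1)(1-\varphi)}$. For small $t$ I would use $n s_n^k\E[X^{2k}e^{-s_ntX^2}]\lesssim t^{\alpha/2-k-\delta}$ combined with $t^{k-1}$, which is integrable since $\alpha/2-1-\delta>-1$.
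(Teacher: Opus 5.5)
You have a genuine gap at the boundary value $\alpha=0$ of part (b). The statement includes this case, and you dismiss it by saying it "fits the same computation". It does not.

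\begin{itemize}
\item At $\alpha=0$ the leading coefficient $\frac{\alpha}{2}\Gamma(k-\frac{\alpha}{2})$ in your asymptotics for $\E[X^{2k}e^{-sX^2}]$ vanishes. Hence $ns_n^k\E[X^{2k}e^{-s_ntX^2}]\to0$ for each fixed $t$.
\item At the same time $1-\varphi$ is slowly varying at $0$, so $\varphi(s_nt)^{n-1}\to e^{-1}$ for every fixed $t>0$.
\item Your limit is $C\cdot\frac{2}{\alpha}/\Gamma(k)$ with $C=0$ and $2/\alpha=\infty$. The integrand tends to $0$ pointwise while the integral must tend to $1$. The mass escapes to $t\to0$ and $t\to\infty$, and no dominating function exists: your small-$t$ bound $t^{\alpha/2-1-\delta}$ is not integrable at $\alpha=0$.
\end{itemize}

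So a separate idea is needed. The paper uses the Mason--Zinn characterization ($X_{11}$ slowly varying iff $n\beta_4\to1$). This gives $n(n-1)\beta_{2,2}=1-n\beta_4\to0$. Then $\beta_{2,2}=\E[Y_{11}^2Y_{12}^2(\sum_tY_{1t}^2)^{k-2}]\gtrsim n^{r-2}\beta_{2k_1,\ldots,2k_r}$ kills every term with $r\ge2$ in \eqref{eq:dge}, leaving $n\beta_{2k}\to1$.

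Alternatively, you can mirror your own proof of (c) using the one-big-jump property recalled in \eqref{eq:dhk}. For slowly varying tails it gives $\max_tY_{1t}^2\cip1$. Hence $1\ge n\beta_{2k}=\E\sum_tY_{1t}^{2k}\ge\E\max_tY_{1t}^{2k}\to1$ by bounded convergence.

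The rest is sound, with some routes differing from the paper's.

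\begin{itemize}
\item \emph{Part (a)} is the paper's argument, but your displayed inequality $\binom{K}{k_1,\ldots,k_r}n(n-1)\cdots(n-r+1)\beta_{2k_1,\ldots,2k_r}\le1$ is false as written when some $k_j$ coincide. Summing over ordered tuples overcounts monomials. For example, with $k_1=k_2=1$ under $\DAN$ it would force $2n(n-1)\beta_{2,2}\le1$, whereas $n(n-1)\beta_{2,2}=1-n\beta_4\to1$. Dividing by the symmetry factor fixes it and does not affect the $O(1)$ conclusion.
\item \emph{Part (b), $\alpha\in(0,2)$:} the paper simply cites \cite{heiny:yao:2022}. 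Your Gamma-integral/Tauberian computation is a self-contained derivation of that formula, and the constants check out. In the domination you must also treat $s\ge\vep$ separately, since Potter bounds only hold near $s=0$. The bound $n\varphi(\vep)^{n-1}\int_0^\infty s^{k-1}\E[X^{2k}e^{-sX^2}]\dint s\le\Gamma(k)\,n\varphi(\vep)^{n-1}\to0$ does this.
\item \emph{Part (c):} the paper cites the equivalence $n\beta_4\to0\iff X_{11}\in\DAN$ from \cite{doernemann:heiny:2025}. Your route via $n\beta_4\le\E\max_tY_{1t}^2$ and bounded convergence is more elementary. Note, however, that $\max_tX_{1t}^2/S\cip0$ is a classical characterization of $\DAN$, equivalent to slow variation of the truncated second moment, not its definition. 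It should be cited as a theorem.
\end{itemize}
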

\begin{proof}
Since $Y_{11}^2+\cdots+Y_{1n}^2=1$, an application of the multinomial theorem shows that for $k=k_1+\cdots+k_r$,
\begin{equation}\label{eq:dge}
1=\E\left[ \left(Y_{11}^2+\cdots+Y_{1n}^2 \right)^k\right]=
\sum_{r=1}^{k}
 \mathop{\sum_{m_1+\cdots +m_r=k }}_{m_j \ge 1} \binom{n}{r}  \binom{k}{m_1,\ldots, m_r} 
\beta_{2m_1,\ldots, {2m_r}}\,.
\end{equation}
Because all terms on the \rhs\ are nonnegative, we obtain $1\gtrsim n^r \beta_{2k_1,\ldots, {2k_r}}$. Moreover, if $k_1\ge 2$, then we have
\begin{equation*}
\beta_4 \ge \beta_{2k_1}=\E\left[Y_{11}^{2k_1} (Y_{11}^2+\cdots+Y_{1n}^2)^{k-k_1} \right]
\gtrsim n^{r-1} \beta_{2k_1,\ldots, {2k_r}}\,,
\end{equation*}
completing the proof of part (a). 

For $\alpha\in (0,2)$, the statement in part (b) is equation (3.3) in \cite{heiny:yao:2022}. Thus, we only need to consider $\alpha=0$ and show $n\beta_{2k}\to 1, k\ge 2$.  Proposition 3 in \cite{mason:zinn:2005} asserts that $X_{11}$ is slowly varying if and only if $n\beta_4 \to 1$. The latter implies $\beta_{2,2}=o(n^{-2})$. We get for any $r\ge 2$,
\begin{equation*}
\beta_{2,2}=\E\big[Y_{11}^{2} Y_{12}^{2} (Y_{11}^2+\cdots+Y_{1n}^2)^{k-2} \big]\gtrsim n^{r-2} \beta_{2k_1,\ldots, {2k_r}}\,,
\end{equation*}
which means that $n^{r} \beta_{2k_1,\ldots, {2k_r}}=o(1)$ whenever $r\ge 2$. In conjunction with \eqref{eq:dge}, we deduce that $n\beta_{2k} \to 1$ for any $k\ge 2$.

Finally, regarding part (c), we note that $\beta_{2k}\le \beta_4$ for $k\ge 2$, so that it suffices to prove $n \beta_4 \to 0$. According to Theorem 2.4 in \cite{doernemann:heiny:2025}, $n \beta_4 \to 0$ is equivalent to $X_{11}\in \DAN$, which concludes the proof of the lemma.
\end{proof}

\begin{remark}
It is worth noting that regularly varying distributions with index $\alpha\ge 2$ are always in the domain of attraction of the normal distribution. For such distributions we see that the limit in \eqref{eq:highestmoment} does not depend on the specific value $\alpha$.   
\end{remark}

\subsection{Mean convergence}
\label{subsec:conv_expectation}

Let us define the constants
    \begin{equation}
        \label{eq:def_tilde_ck}
        \tilde{c}_{2k}^{(\alpha)} \coloneqq \sum_{\ell | k} 
        \left(c_{\ell}^{(\alpha)}\right)^{k / \ell} 
        \text{ and } \ \tilde{c}_{2k+1}^{(\alpha)} \coloneqq 0 \,, \qquad k\ge 1\,.
    \end{equation}
\begin{lemma}[Mean convergence] \label{lemma:mean_convergence}
For every $k \geqslant 1$, one has the convergence
    \begin{equation}
        \E \, \Tr \left[ \Y^k_n \right] \to \tilde{c}_k^{(\alpha)} \,, \qquad n \to \infty\,.
    \end{equation}
\end{lemma}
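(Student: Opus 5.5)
The plan is to expand the trace as a sum over closed paths, use row independence and symmetry to factor each expectation into mixed moments $\beta_{2m_1,\ldots,2m_r}$, and then count powers of $n$. We write
\begin{equation*}
\E\,\Tr\big[\Y_n^k\big]=\sum_{\mathbf{i}:[k]\to[n]}\E[Y_{\mathbf i}],
\end{equation*}
and group the paths $\mathbf i$ according to their shape, i.e.\ the isomorphism class of the multigraph $G_{\mathbf i}$ together with the order in which the edges are traversed. There are finitely many shapes for fixed $k$. For a shape with $|V_{\mathbf i}|=V$ vertices, the number of paths realizing it is $n(n-1)\cdots(n-V+1)\sim n^V$.

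By row independence (property (2)), $\E[Y_{\mathbf i}]$ factorizes over the vertices $v\in V_{\mathbf i}$. The factor for $v$ is $\E\big[\prod_{(v,u)\in E_{\mathbf i}}Y_{vu}\big]$. By property (1), this factor vanishes unless every distinct out-edge $(v,u)$ appears with even multiplicity $2m_{vu}$. In that case it equals $\beta_{2m_{vu_1},\ldots,2m_{vu_{r_v}}}$, where $r_v$ is the number of distinct out-neighbours of $v$.

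Let $E$ denote the number of distinct edges, so that $E=\sum_v r_v$. Lemma~\ref{lem:allmoments}(a) gives $n^{r_v}\beta_{\ldots}=O(1)$, and hence the total contribution of a shape is $O(n^{V-E})$. The graph of distinct edges is strongly connected, because it carries a closed walk. Every vertex therefore has out-degree at least one, and so $E\ge V$. Shapes with $E>V$ contribute $O(n^{-1})$, and since there are finitely many shapes, all of them together vanish in the limit.

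It remains to identify the shapes with $E=V$. In that case each vertex has exactly one out-neighbour. Strong connectivity then forces the distinct-edge graph to be a single directed cycle of some length $\ell\mid k$, where loops ($\ell=1$) are allowed. The walk on such a cycle is forced, with $i_{j+\ell}=i_j$, so each edge is traversed $k/\ell$ times. The parity condition requires $k/\ell=2m$ to be even. This immediately gives the limit $0$ for odd $k$.

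For $k=2K$, each admissible $\ell\mid K$ contributes
\begin{equation*}
n(n-1)\cdots(n-\ell+1)\,\beta_{2m}^{\ell}=(1+o(1))\,(n\beta_{2m})^\ell,\qquad m=K/\ell .
\end{equation*}
We then evaluate the limit of each such term.
\begin{itemize}
\item If $m=1$, we use $\beta_2=1/n$ exactly, so the limit is $1=c_1^{(\alpha)}$.
\item If $m\ge2$, Lemma~\ref{lem:allmoments}(b) (or (c) when $\alpha=2$) gives $n\beta_{2m}\to c_m^{(\alpha)}$.
\end{itemize}
Summing over $\ell\mid K$ and substituting $m=K/\ell$ gives
\begin{equation*}
\sum_{m\mid K}\big(c_m^{(\alpha)}\big)^{K/m}=\tilde c_{2K}^{(\alpha)},
\end{equation*}
which is the claimed limit.

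The main point needing care is the counting step. First, the graph-theoretic claim must be checked: strong connectivity together with $E=V$ forces a single cycle traversed uniformly. Second, the error bound must be uniform. For this, the constants in Lemma~\ref{lem:allmoments}(a) depend only on the exponents, which range over a finite set for fixed $k$, so the bound $O(n^{V-E})$ is uniform over shapes. Everything else is direct bookkeeping.
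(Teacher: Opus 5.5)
Your proof is correct and follows the same route as the paper. You expand the trace over closed paths, factor by rows, and bound each vertex's contribution by $n^{-d_v}$ via Lemma~\ref{lem:allmoments}(a) against the $n^{|V|}$ path count, so that only single directed cycles traversed an even number $2m$ of times survive, each giving $(n\beta_{2m})^{\ell}\to (c_m^{(\alpha)})^{\ell}$. Your strong-connectivity argument that $E=V$ forces a uniformly traversed cycle is a slightly cleaner version of the paper's returns-to-the-first-vertex argument.
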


\begin{proof}[Proof of Lemma \ref{lemma:mean_convergence}]
    Let $k \geqslant 1$ be fixed and set $b_{k, n} = \Tr \left[ \Y^k_n \right]$.
    From the symmetry and row independence, one has 
    $\E [b_{2k+1, n}] = 0$. For the rest of the proof, we will only consider even moments $\E [b_{2k, n}]$. Let $G_\mathbf{i}$ be the directed graph associated to path $\mathbf{i}$. Since $\mathbf{i}$ is a cycle, the graph $G_\mathbf{i}$ is connected. Then, every edge should have an even multiplicity for $\mathbf{i}$ to have a nonzero contribution to $\E [b_{2k, n}]$. The number of vertices $|V_\mathbf{i}|$  is equal to the number of different rows appearing in the product $Y_\mathbf{i}$. Using Lemma~\ref{lem:allmoments}, each vertex $v \in V_\mathbf{i}$ will give an asymptotic contribution of order $n^{-d_v}$, where $d_v$ is the outer degree of $v$. Therefore, to have a non-vanishing contribution, each vertex should be of outer degree one; that is, each row should have only one column appearing in a product expression of the form $\beta_{2k_1, \dots, 2k_r}$. The graph $G_\mathbf{i}$ is thus a cycle.
    Let $b \coloneqq \# \left\{ 2 \leqslant j \leqslant 2k+1 \mid i_j = i_1 \right\} \geqslant 1$ be the number of returns to the first vertex of $\mathbf{i}$, with the convention that $i_{2k+1} = i_1$ and let $2 \leqslant j_1 < \dots < j_b \leqslant 2k+1$ be the indexes such that $i_{j_r} = i_1$. 
    Since each vertex has outer degree one, for each $1 \leqslant r \leqslant b$, we have that $i_{j_r+1} = i_{2}$. Since otherwise, if $i_{j_r+1} \neq i_{2}$, the vertex $i_1$ would have outer degree at least two since $G_\mathbf{i}$ would have edges $(i_1, i_{2})$ and $(i_1, i_{j_r+1})$ having different endpoints. The same holds for every other vertex than $i_1$ using cyclic permutation. Therefore, each edge in the cycle $G_\mathbf{i}$ has the same multiplicity $b \geqslant 1$ which divides $2k$.
    
    Let us thus introduce $d_2(2k) = \{ \ell \in 2 \N : \ell | 2k \}$ the set of even divisors of $2k$. For each $\ell \in d_2(2k)$, consider the cycle on $q = 2k / \ell$ vertices having edges of multiplicity $\ell$ between two consecutive vertices. Such a cycle gives a contribution of $\beta_{\ell}^q$.
    Since there are $n(n-1) \dots (n-q+1) \sim n^q$ such cycles, we thus have 
    \begin{equation*}
        \E [b_{2k, n}] = \sum_{\ell \in d_2(2k)} 
        \left(c_{\ell/2}^{(\alpha)}\right)^{2k / \ell} + o(1) =\tilde{c}_{2k}^{(\alpha)}+o(1)\, .
    \end{equation*}
\end{proof}

\subsection{Random contribution}
\label{subsec:radnom_contribution}

We will prove Theorem \ref{thm:conv_traces} using the method of moments.
To this end, for $k \geqslant 1$, we need the notation $a_{k, n} := \Tr \left[ \Y_n^k \right] 
- \E \, \Tr \left[ \Y_n^k \right]$.

\begin{proposition}[Convergence of moments]
\label{prop:moment_convergence}
    Let $\ell \geqslant 1$ be fixed and $k_1,\ldots,k_\ell\ge 1$. Then it holds
\begin{equation}
    \label{eq:random_limit}
    \E \left[ \prod_{r=1}^\ell a_{k_r, n} \right]
        \to \E \left[ \prod_{r=1}^\ell \tilde{Z}_{k_r} \right] 
        \,, \qquad \nto\,,
\end{equation}
    where $\tilde{Z}_k^{(\alpha)} := Z_k^{(\alpha)} - \E \left[Z_k^{(\alpha)} \right]$ and $\big(Z_k^{(\alpha)}\big)_{k \geqslant 1}$ is the $\alpha$-family from Definition \ref{def:alpha_family}.
\end{proposition}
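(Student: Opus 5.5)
We expand the joint moment as a sum over tuples of closed paths and compute its limit combinatorially, following the strategy of the proof of Lemma~\ref{lemma:mean_convergence}. Write
\[
\E\Big[\prod_{r=1}^\ell a_{k_r,n}\Big]=\sum_{\mathbf{i}^{(1)},\dots,\mathbf{i}^{(\ell)}}\E\Big[\prod_{r=1}^\ell\big(Y_{\mathbf{i}^{(r)}}-\E Y_{\mathbf{i}^{(r)}}\big)\Big],
\]
and attach to the tuple the union multigraph $G=\bigcup_r G_{\mathbf{i}^{(r)}}$ (keeping track of edge multiplicities and of which path each edge belongs to). Several features of a term depend only on the equivalence class of the tuple under relabelling of vertices, namely its shape:
\begin{itemize}
\item By row independence (property (2)), the centred product factorizes over rows.
\item If some cycle $G_{\mathbf{i}^{(r)}}$ shares no vertex with the others, the term vanishes, because that factor is centred and independent of the rest.
\item By symmetry (property (1)), every vertex must see each of its outgoing column indices an even number of times in total.
\item By Lemma~\ref{lem:allmoments}(a),(b), a vertex whose outgoing edges go to $d_v$ distinct columns contributes $O(n^{-d_v})$, with exact asymptotics $c^{(\alpha)}_{m}n^{-1}$ when $d_v=1$ and all its outgoing mass is concentrated in one column with total exponent $2m$.
\end{itemize}
Since there are $\asymp n^{|V|}$ tuples of a given shape, the contribution of that shape is of order $n^{|V|-\sum_v d_v}$.

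Because each vertex has at least one outgoing edge, every term is $O(1)$, and the surviving shapes are exactly those with $d_v=1$ for all $v$. In such a shape every vertex has a unique out-neighbour, and the sub-function on vertices is a map $V\to V$ whose image contains every vertex (each vertex lies on a closed path, so it has an incoming edge); hence it is a permutation. Every cycle $\mathbf{i}^{(r)}$ then winds around a single cycle of this permutation. Repeating the argument of Lemma~\ref{lemma:mean_convergence}, we deduce that a path of length $k_r$ winds $m_r$ times around a vertex-cycle of length $q$ with $q m_r=k_r$. A vertex lying on a cycle that is traversed by paths with windings $m_{r_1},\dots,m_{r_s}$ carries total exponent $m=\sum_j m_{r_j}$, and this exponent must be even. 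Using $n\beta_{2\cdot(m/2)}\to c^{(\alpha)}_{m/2}$, the limiting contribution is a sum over set partitions of $[\ell]$ into blocks sharing a common cycle length $q$, each block weighted by
\[
\frac{1}{q}\cdot q^{s}\cdot\Big(c^{(\alpha)}_{(m_{r_1}+\dots+m_{r_s})/2}\Big)^{q}.
\]
Here $q^{s}/q$ counts the cyclic starting points of the paths relative to the cycle, modulo rotations of the cycle. Centering removes every block of size one, and independence across vertex-cycles yields the product over blocks. The $o(1)$ error terms come from shapes with some $d_v\ge 2$ and from coincidences among vertices of distinct cycles; both are controlled by Lemma~\ref{lem:allmoments}(a), since for fixed $k_1,\dots,k_\ell$ the number of shapes is finite.

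It remains to match this with the limit object. The $V_{q,m}$ are independent across $q$, and within fixed $q$ they are Poisson integrals of the single process $N_q^{(\alpha)}$. By the Poisson cumulant formula, for $s\ge2$ the joint cumulant is
\[
\kappa\big(V_{q,m_1},\dots,V_{q,m_s}\big)=q^{s}\int\prod_j f_{m_j}\,\dint\nu_q^{(\alpha)}.
\]
The integrand $\prod_j f_{m_j}(u)=\sign(u)^{M}|u|^{M/2}$ with $M=\sum_j m_j$ integrates against the symmetric measure $\nu_q^{(\alpha)}$ to
\[
\frac1q\,\E\big[W_q^{M/2-1}\big]\,\1_{\{2\mid M\}}=\frac1q\big(c^{(\alpha)}_{M/2}\big)^{q}\,\1_{\{2\mid M\}},
\]
using \eqref{eq:moments_beta}. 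This is precisely the block weight found above. Since $Z_k=\sum_{q\mid k}V_{q,k/q}$, the moment–cumulant formula expresses $\E\prod_r\tilde Z_{k_r}$ as the sum over set partitions with no singletons, where each block requires a common $q$ dividing all its $k_r$. This reproduces the combinatorial limit exactly, which proves \eqref{eq:random_limit}.

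The main obstacle is the combinatorial bookkeeping in the joint (rather than single-path) setting. One must show that overlapping paths either reduce to the "common cycle, $d_v=1$" structure or are $o(1)$, and one must count the rotations correctly so as to obtain the factor $q^{s-1}$. The expansion of the centred product, in which subtracted means produce lower-order structures, should be handled by writing joint cumulants of the $Y_{\mathbf{i}}$ instead; this is where I expect the most care to be needed.
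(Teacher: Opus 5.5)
Your proposal is correct and follows the paper's proof essentially step by step. The paper expands over tuples of closed paths, uses the out-degree count $n^{|V|-\sum_v d_v}$ to reduce to configurations where every vertex has outer degree one, obtains block weights $q^{|B|-1}\big(c^{(\alpha)}_{k_B/(2q)}\big)^q$ with singletons killed by centering, and matches these to the limit via the Poisson cumulant formula, the divisor-family cumulant lemma and the moment--cumulant formula. The centering step you flag is handled in the paper by an elementary expansion (out-degrees are subadditive under unions of paths, and within a $q$-cycle block any factor replaced by its mean costs an extra $O(n^{-q})$), so joint cumulants of the $Y_{\mathbf{i}}$ are not needed.
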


Theorem \ref{thm:conv_traces} is then a  consequence of Proposition~\ref{prop:moment_convergence}, as we show next.

\begin{proof}[Proof of Theorem \ref{thm:conv_traces}]
From \eqref{eq:def_V_q_m} recall the random variables $\big(V_{q, m}^{(\alpha)}\big)_{q, m \geqslant 1}$. For simplicity we omit the superscript $(\alpha)$ in what follows. Since the $(V_{q, m})_{q, m \geqslant 1} $ are determined by their moments, 
so are the variables $(\tilde{Z}_k^{(\alpha)})_{k \geqslant 1}$ and therefore, using Proposition~\ref{prop:moment_convergence} and  
Lemma \ref{lemma:mean_convergence}, 
\begin{equation}
    \left(\Tr \left[ \Y_n^{k_1} \right], \dots, \Tr \left[ \Y_n^{k_\ell} \right] \right) \cid \left(\tilde{Z}_{k_1}^{(\alpha)}, \dots, \tilde{Z}_{k_\ell}^{(\alpha)}\right) + \left(\tilde{c}_{k_1}^{(\alpha)}, \dots, \tilde{c}_{k_\ell}^{(\alpha)} \right) \,, \qquad \nto\,,
\end{equation}
where $\tilde{c}_{k}^{(\alpha)}$ are defined in \eqref{eq:def_tilde_ck}.
Moreover, writing $\kappa_s(\cdot)$ for the $s$-th cumulant, we have
\begin{equation}
    \label{eq:E_Z_k_as_tilde_c}
    \E \left[Z_{k}^{(\alpha)}\right] = 
    \sum_{\ell | k} \E \left[V_{\ell, \frac{k}{\ell}} \right] = 
    \sum_{\ell | k} \kappa_1 \left(V_{\ell, \frac{k}{\ell}} \right)
    = \sum_{\ell | k} \left(c_{\frac{k}{2\ell}}^{(\alpha)}\right)^{\ell} \1_{\frac{k}{\ell} \in 2 \N} = \tilde{c}_k^{(\alpha)}
    \,.
\end{equation}
For the last equality we used the fact that $\left\{(k/(2\ell),\ell): \ell | \tfrac{k}{2} \right\} = \left\{(\ell,k/(2\ell)): \ell | \tfrac{k}{2} \right\}$.
Therefore, we conclude that 
\begin{equation}
    \left(\Tr \left[ \Y^{k_1}_n \right], \dots, \Tr \left[ \Y^{k_\ell}_n \right] \right) \cid \left( Z_{k_1}^{(\alpha)}, \dots, Z_{k_\ell}^{(\alpha)} \right) \,, \qquad \nto\,.
\end{equation}
\end{proof}

\begin{remark}[Product expression] 
 Since for every $\ell \geqslant 1$, we have that $\big|c_\ell^{(\alpha)} \big| \leqslant 1$, one has 
$\big|\tilde{c}_{2k}^{(\alpha)} \big| \leqslant 2k$ so that the series
\begin{equation}
    g(z) := -\sum_{k \geqslant 1} \frac{z^k}{k} \tilde{c}_{k}^{(\alpha)} =-\sum_{k \geqslant 1} \frac{z^{2k}}{2k} \tilde{c}_{2k}^{(\alpha)}= 
    - \sum_{k \geqslant 1} \frac{z^{2k}}{2k} \sum_{\ell \in d_2(2k)} \left(c_{\ell/2}^{(\alpha)}\right)^{2k / \ell}
\end{equation}
converges absolutely on compact subsets of $\D$. Recall that since $\E \left[Z_{k}^{(\alpha)}\right] = \tilde{c}_{k}^{(\alpha)}$, we have that $g(z) = - \sum_{k \geqslant 1} \frac{z^k}{k} \E \left[Z_{k}^{(\alpha)}\right]$. Writing $2k = (2\ell)q$ and using Fubini yields
\begin{equation*}
    g(z) = -\sum_{\ell \geqslant 1} \frac{1}{2 \ell} \sum_{q \geqslant 1} \frac{ \left(z^{2 \ell} c_\ell^{(\alpha)} \right)^q}{q} 
    = \sum_{\ell \geqslant 1} \frac{1}{2 \ell} \log \left( 1 - z^{2 \ell}c_\ell^{(\alpha)} \right) 
    = \log \left( \prod_{\ell \geqslant 1} \left( 1 - z^{2 \ell}c_\ell^{(\alpha)} \right)^{\frac{1}{2 \ell}} \right) \,.
\end{equation*}
Therefore, using that $Z_{k}^{(\alpha)} = \tilde{c}_{k}^{(\alpha)} +  \tilde{Z}_{k}^{(\alpha)}$, the limit expression of $p_n$ can be written as
\begin{equation}
\label{eq:product_expression}
    p_n(z) \cid \prod_{\ell \geqslant 1} 
    \left( 1 - z^{2 \ell}c_\ell^{(\alpha)} \right)^{\frac{1}{2 \ell}}  \cdot \exp \left( - \sum_{k \geqslant 1} \tfrac{z^k}{k} \tilde{Z}_{k_\ell}^{(\alpha)} \right) \,, \qquad \nto\,.
\end{equation}
\end{remark}

The product expression \eqref{eq:product_expression} above can be used to identify the limiting zeroes of $p_n$. Indeed, we have the following result from Proposition 2.3 in \cite{Shirai}, see also Proposition 7.1 in \cite{Coste_Bernoulli}. 

\begin{lemma}[Convergence of zeroes]
\label{lem:zeroes_convergence}
    Let $(f_n)_{n \geqslant 1}$ be a sequence of random analytic functions in $\D$, converging in law to a non-zero function $f$. Let $\Phi_n$ and $\Phi$ be the respective sets of zeroes of $f_n$ and $f$ respectively. Then, $\Phi_n \cid \Phi$, for the topology of weak convergence with respect to the vague topology.
\end{lemma}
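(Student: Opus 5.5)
The plan follows the standard Hurwitz-type argument, as in Proposition 2.3 of \cite{Shirai}. By Skorokhod's representation theorem (the space $\mathcal{H}(\D)$ with the topology of local uniform convergence is Polish), we may realize $f_n$ and $f$ on a common probability space with $f_n \to f$ almost surely, locally uniformly on $\D$. Weak convergence of the zero sets, viewed as random point measures $\sum_{\lambda \in \Phi_n}\delta_\lambda$ with multiplicity, for the vague topology then reduces to showing that, almost surely, $\int g \dint \Phi_n \to \int g \dint \Phi$ for every continuous compactly supported $g$ on $\D$. Since $f \not\equiv 0$, its zeros are isolated almost surely, so this almost sure statement is enough.

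The key deterministic step is Hurwitz's theorem. Fix a realization with $f \not\equiv 0$. For every closed disk $\bar B \subset \D$ whose boundary contains no zero of $f$, the argument principle gives
\begin{equation*}
\#(\Phi_n\cap B)=\frac{1}{2\pi i}\oint_{\partial B}\frac{f_n'}{f_n}\dint z .
\end{equation*}
Uniform convergence $f_n\to f$ on a neighbourhood of $\partial B$ implies $f_n'\to f'$ there, and $|f|$ is bounded below on $\partial B$. Hence the integrand converges uniformly, and $\#(\Phi_n\cap B)=\#(\Phi\cap B)$ for all large $n$.

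Applying this to small disks around each zero of $f$ inside a compact set $K \supset \operatorname{supp} g$, and to the complement, where $f$ is bounded away from zero, shows that for large $n$ the zeros of $f_n$ in $K$ split into clusters. These clusters converge to the zeros of $f$ with the correct multiplicities, and no other zeros remain. Uniform continuity of $g$ then gives $\int g \dint \Phi_n \to \int g \dint \Phi$. Almost sure vague convergence implies convergence in distribution, which proves $\Phi_n \cid \Phi$.

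The only delicate point is choosing a compact set $K$ whose boundary avoids the zeros of $f$. This is possible almost surely, for instance by taking disks $\{|z|\le r\}$ for all but countably many radii $r$, since $f$ has only countably many zeros. Beyond this the argument is routine.
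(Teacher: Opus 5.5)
Your proof is correct, and it is essentially the argument behind the result the paper only cites (Proposition 2.3 of \cite{Shirai}, no proof in the paper): Hurwitz's theorem makes the zero-counting map continuous, for the vague topology, at every $h\not\equiv 0$ in $\mathcal{H}(\D)$, and Skorokhod's representation (equivalently, the continuous mapping theorem) turns this into convergence in law. You leave two routine points unstated: the zero-counting map must be measurable, and the event $\{f_n\equiv 0\}$ must be handled, which is easy because $\P(f_n\equiv 0)\to 0$ by the Portmanteau theorem ($\{0\}$ is closed and $\P(f\equiv 0)=0$).
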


The rest of this section is devoted to the proof of Proposition \ref{prop:moment_convergence}. Section \ref{subsec:moment_expansion} establishes a first formula for the limit of centered traces. The proof of Proposition \ref{prop:moment_convergence} will be given in Section~\ref{subsec:proof_prop_random_limit}.

\subsubsection{Moment expansion}
\label{subsec:moment_expansion}

Let $\mathcal{P}_\ell$ denote the set of set partitions $\pi$  of $\{1,\ldots,\ell\}$.  Furthermore, we write for $\mathcal{P}_{\ell,2}$ the set of set partitions of $\{1,\ldots,\ell\}$ having blocks $B$ of size at least two. In other words, $\pi\in \mathcal{P}_{\ell,2}$ if and only if $\pi\in \mathcal{P}_{\ell}$ and $|B|\ge 2$ for all $B\in\pi$.

\noindent
The following result is an important ingredient of the proof of Proposition \ref{prop:moment_convergence}.

\begin{lemma}[Product formula]
\label{lem:product_formula}
Let $\ell \geqslant 1$ be fixed and $k_1,\ldots,k_\ell\ge 1$. With the convention $c_x^{(\alpha)}=0,\, x\notin\mathbb N$, then it holds 
    \begin{equation}
    \label{eq:moments_and_cgd}
        \E \left[ \prod_{r=1}^\ell a_{k_r, n} \right]
        \to \sum_{\pi \in \mathcal{P}_{\ell,2}} \prod_{B \in \pi} \left(  
     \sum_{q | \gcd(k_j:j\in B)} 
         q^{|B|-1}\left(c_{k_B/(2q)}^{(\alpha)} \right)^q \right) \,, \qquad \nto\,,
    \end{equation}
    where we used the notation $k_B = \sum_{j \in B} k_j$ for a given block $B$ of the partition $\pi$. 
\end{lemma}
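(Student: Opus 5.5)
We expand the product of centered traces as a sum over $\ell$-tuples of closed paths and carry out the usual graph-counting argument, using Lemma~\ref{lem:allmoments} to price each configuration. Write
\[
a_{k_r,n}=\sum_{\mathbf{i}^{(r)}:[k_r]\to[n]}\bigl(Y_{\mathbf{i}^{(r)}}-\E Y_{\mathbf{i}^{(r)}}\bigr),
\]
so that $\E\prod_r a_{k_r,n}$ is a sum over tuples $(\mathbf{i}^{(1)},\dots,\mathbf{i}^{(\ell)})$ of $\E\prod_r(Y_{\mathbf{i}^{(r)}}-\E Y_{\mathbf{i}^{(r)}})$. Define a partition $\pi$ of $[\ell]$ by joining $r$ and $r'$ when the vertex sets $V_{\mathbf{i}^{(r)}}$ and $V_{\mathbf{i}^{(r')}}$ intersect, and take the transitive closure. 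By row independence (property (2)), the expectation factorizes over the blocks of $\pi$. Any singleton block $\{r\}$ contributes $\E[Y_{\mathbf{i}^{(r)}}-\E Y_{\mathbf{i}^{(r)}}]=0$, so only $\pi\in\mathcal P_{\ell,2}$ survive. It then suffices to show that, for each block $B$, the sum over connected configurations of the paths $(\mathbf{i}^{(j)})_{j\in B}$ converges to
\[
\sum_{q\mid\gcd(k_j:j\in B)}q^{|B|-1}\bigl(c^{(\alpha)}_{k_B/(2q)}\bigr)^q .
\]
The number of blocks is fixed, and the vertex sets of different blocks are disjoint, so the asymptotic count factorizes up to $1+o(1)$ factors.

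\textbf{Power counting within a block.} Merge the paths of $B$ into a connected directed multigraph $G$. In the expanded product $\prod_{j\in B}(Y_{\mathbf{i}^{(j)}}-\E Y_{\mathbf{i}^{(j)}})$, each term is a product of moments $\beta_{2m_1,\dots,2m_s}$ over rows. By property (1), each row must see each of its columns an even number of times. By Lemma~\ref{lem:allmoments}(a), a vertex with $s$ distinct out-neighbours costs $O(n^{-s})$. The free choice of vertex labels gives $n^{|V|}$. Since every vertex has out-degree at least one, the total weight is $O(n^{|V|-\sum_v d_v})\le O(1)$, with equality only if every vertex has out-degree exactly one. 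In that case $G$, being connected with all out-degrees equal to one and every vertex lying on a closed walk, is a single directed cycle on $q$ vertices. As in the proof of Lemma~\ref{lemma:mean_convergence}, each path $\mathbf{i}^{(j)}$ then winds around this cycle $k_j/q$ times, which forces $q\mid\gcd(k_j:j\in B)$. Each row on the cycle is visited $k_B/q$ times in total through a single column, so the weight of that row is $\beta_{k_B/q}$. This requires $k_B/q$ to be even, which matches the convention $c_x=0$ for $x\notin\mathbb N$, and by Lemma~\ref{lem:allmoments}(b) we get $n\beta_{k_B/q}\to c^{(\alpha)}_{k_B/(2q)}$. The case $k_B/q=2$ gives $n\beta_2=1=c_1$, consistent with the convention.

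\textbf{Counting and centering.} Next we count labellings. There are $n(n-1)\cdots(n-q+1)\sim n^q$ choices of the labelled cycle, up to the rotation chosen by the first path. Each of the other $|B|-1$ paths chooses its starting point among the $q$ vertices of the cycle. This gives the factor $q^{|B|-1}$, and the product over rows gives $(c^{(\alpha)}_{k_B/(2q)})^q$.

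We must also check that the centering terms $\E Y_{\mathbf{i}^{(j)}}$ do not change the leading order. When the paths share rows, these terms are products of the factorized expectations of sub-families. Their power count is $n^{|V|}$ times a product over groups of vertex costs, which in a connected configuration of at least two overlapping paths is strictly smaller: a shared vertex is charged in two separate expectations, each costing at least $n^{-1}$. So the centering terms are $o(1)$.

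\textbf{Main obstacle.} The step I expect to be hardest is this last one, together with uniform control of the non-cycle configurations. Lemma~\ref{lem:allmoments}(a) only gives $O(1)$ per vertex, not decay, so I must verify carefully that the exponent $|V|-\sum_v d_v$ is strictly negative for every non-leading shape. I also need to show that mixed terms produced by the centering cannot reach order $n^0$. To handle this, I would bound the centered product by its expansion via inclusion–exclusion and apply the same degree count to each resulting factorized moment product, using that the number of shapes is bounded in terms of $k_B$ only.
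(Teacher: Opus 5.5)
Your proposal is correct and follows essentially the paper's argument. You expand over $\ell$-tuples of closed paths and use Lemma~\ref{lem:allmoments}(a) to show that only configurations with all out-degrees equal to one survive. You identify each block as $|B|$ paths winding around a common $q$-cycle, which gives $n^{q}\,q^{|B|-1}\beta_{k_B/q}^{q}$. You then show that the centering cross terms lose a power of $n$ because a shared vertex is charged twice.

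The only difference is organizational. You first split exactly into vertex-overlap components, so singletons vanish by independence and centering; the paper instead power-counts globally over all $2^\ell$ centering patterns before grouping into blocks. The ``obstacle'' you flag is already settled by your own count, since any vertex of out-degree at least two forces $|V|-\sum_v d_v\le -1$.
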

\begin{proof}[Proof of Lemma \ref{lem:product_formula}]
    For a path $\mathbf{i}: [k] \to [n]$, let us denote by $\overset{\circ}{Y_\mathbf{i}} = Y_\mathbf{i} - \E Y_\mathbf{i}$. Then we have that 
    \begin{equation}
    \label{eq:intial_sum}
        \E \left[ \prod_{r=1}^\ell a_{k_r, n} \right] = 
        \E \left[ \prod_{r=1}^\ell \left( \sum_{\mathbf{i}^{(r)}: [k_r] \to [n] } \overset{\circ}{Y_{\mathbf{i}^{(r)}}} \right) \right] = 
        \sum_{\mathbf{i}^{(1)}, \dots, \mathbf{i}^{(\ell)}} \E \left[ \prod_{r=1}^\ell \overset{\circ}{Y_{\mathbf{i}^{(r)}}} \right] \ .
    \end{equation}
    Let us start by proving that
    \begin{equation}
        \label{eq:reduced_to_cycles}
        \sum_{\mathbf{i}^{(1)}, \dots, \mathbf{i}^{(\ell)}} \E \left[ \prod_{r=1}^\ell \overset{\circ}{Y_{\mathbf{i}^{(r)}}} \right] 
        = \sum_{\substack{
\mathbf{i}^{(1)}, \dots, \mathbf{i}^{(\ell)} \\
(\mathbf{i}^{(1)}, \dots, \mathbf{i}^{(\ell)}) \in \mathcal{C}_{k_1} \times \dots \times \mathcal{C}_{k_\ell}
}} \E \left[ \prod_{r=1}^\ell \overset{\circ}{Y_{\mathbf{i}^{(r)}}} \right] + o(1) \,,
    \end{equation} 
    where $\mathcal{C}_{k}$ denotes the set of directed loop paths of length $k$ having every vertex of outer degree one. Recall from Lemma \ref{lem:allmoments} and row independence that for a path $\mathbf{i}$
    \begin{equation}
        \label{eq:control_expectation}
        \E [Y_\mathbf{i}] = O \left(n^{- \sum_{v \in V_{\mathbf{i}}} d_v} \right)\,,
    \end{equation}
    where $V_{\mathbf{i}}$ is the vertex set of the graph $G_{\mathbf{i}}$, and  
    $d_v \geqslant 1$ is the outer degree of $v$ defined in the proof of Lemma \ref{lemma:mean_convergence}. 
    A generic term in the sum \eqref{eq:intial_sum} is of the form 
\begin{equation*}
                \E \left[Y_{\mathbf{i}^{(1)}}^{s_1} 
        \dots Y_{\mathbf{i}^{(\ell)}}^{s_\ell} \right]  = \prod_{r: s_r = * } 
        \E[Y_{\mathbf{i}^{(r)}}] \cdot \E \left[ \prod_{r: s_r = \cdot} Y_{\mathbf{i}^{(r)}}  \right]\,,
    \end{equation*}
    where $s_i \in \{*, \cdot\}$ with $x^\cdot = x$ and $x^* = \E[x]$.
    Using \eqref{eq:control_expectation}, the first term in the product on the right-hand side is 
    \begin{equation*}
        \prod_{r: s_r = * } 
        \E[Y_{\mathbf{i}^{(r)}}] = O \left(n^{- \sum_{r: s_r = *} \sum_{v \in V_{\mathbf{i}^{(r)}}} d_v^{(r)}} \right)\,.
    \end{equation*}
    Let us define the graphs $G = (V, E)$ and $\tilde{G} = (\tilde{V}, \tilde{E})$ by 
    \begin{align*}
        V = \bigcup_{r=1}^\ell V_{\mathbf{i}^{(r)}}\,, \quad E = \bigsqcup_{r=1}^\ell E_{\mathbf{i}^{(r)}}\,, \quad &\text{ and } \quad
        \tilde{V} = \bigcup_{r: s_r = \cdot}^\ell V_{\mathbf{i}^{(r)}} \,, \quad\tilde{E} = \bigsqcup_{r: s_r= \cdot}^\ell E_{\mathbf{i}^{(r)}}\,.
    \end{align*}
    Then, the second term has contribution 
    \begin{equation*}
        \E \left[ \prod_{r: s_r = \cdot} Y_{\mathbf{i}^{(r)}}  \right] = O \left(n^{- \sum_{v \in \tilde{V}} \tilde{d}_v} \right),
    \end{equation*}
    where $\tilde{d}_v$ is the outer degree of a vertex $v$ of the graph $\tilde{G}$. 
    Up to a relabeling of the vertices, the number of $\mathbf{i}^{(1)}, \dots, \mathbf{i}^{(\ell)}$ forming the graph $G$ is of order $O \left(n^{|V|} \right)$.  Thus, for the contribution to be non-vanishing one must have 
    \begin{equation}
        |V| \geqslant \sum_{v \in \tilde{V}} \tilde{d}_v + \sum_{r: s_r = *} \sum_{v \in V_{\mathbf{i}^{(r)}}} d_v^{(r)} \,.
    \end{equation}
    However, $|V| \leqslant |\tilde{V}| +  \sum_{r: s_r = *} |V_{\mathbf{i}^{(r)}}| \leqslant \sum_{v \in \tilde{V}} \tilde{d}_v + \sum_{r: s_r = *} \sum_{v \in V_{\mathbf{i}^{(r)}}} d_v^{(r)}$.
    Therefore, the non-vanishing contribution are the ones coming from paths $\mathbf{i}^{(1)}, \dots, \mathbf{i}^{(\ell)}$ 
    such that 
    \begin{equation}
        \label{eq:non_vanishing_1}
        |V| = \sum_{v \in \tilde{V}} \tilde{d}_v + \sum_{r: s_r = *} \sum_{v \in V_{\mathbf{i}^{(r)}}} d_v^{(r)} \,,
    \end{equation}
    that is, for which $ \tilde{d}_v = 1$ for $v \in \tilde{V}$ and $d_v^{(r)} = 1$ for $r$ such that $s_r = *$. 
    Moreover, the equality implies that $|V| = |\tilde{V}| +  \sum_{r: s_r = *} |V_{\mathbf{i}^{(r)}}| $ so that $\left( \bigcup_{r: s_r = *} V_{\mathbf{i}^{(r)}} \right) \ \cap \tilde{V} = \emptyset$. Therefore, for every $v \in V$, $d_v = 1$ so that non-vanishing contributions are given by tuples $(\mathbf{i}^{(1)}, \dots, \mathbf{i}^{(\ell)})  \in \mathcal{C}_{k_1} \times \dots \times \mathcal{C}_{k_\ell}$ which proves \eqref{eq:reduced_to_cycles}. 
    \medskip
    
    Since each term in the product $\E \left[ \prod_{r=1}^\ell \overset{\circ}{Y_\mathbf{i^{(r)}}} \right] $ is centered, non-vanishing terms come from partitions of $[\ell]$ such that on each block $B = (b_1, \dots, b_s)$ of size $s \geqslant 2$, 
    and $G_{\mathbf{i}^{(b_1)}}, \dots, G_{\mathbf{i}^{(b_s)}}$ are cycles with common vertex set having edges in the same direction. Thus, 
    \begin{equation}
    \sum_{\substack{\mathbf{i}^{(1)}, \dots, \mathbf{i}^{(\ell)} \\
    (\mathbf{i}^{(1)}, \dots, \mathbf{i}^{(\ell)}) \in \mathcal{C}_{k_1} \times \dots \times \mathcal{C}_{k_\ell}}} 
    \E \left[ \prod_{r=1}^\ell \overset{\circ}{Y_{\mathbf{i}^{(r)}}} \right]
    = \sum_{\pi \in \mathcal{P}_{\ell,2}} \prod_{B \in \pi} \left( \sum_{\mathbf{i}^{(b_1)}, \dots, \mathbf{i}^{(b_s)}} \E \left[ \prod_{r=1}^s \overset{\circ}{Y_{\mathbf{i}^{(b_r)}}} \right] \right) \,.
    \end{equation}
    Let $B$ be a block of $\pi$ of size $s \geqslant 2$.  Up to relabeling, let us assume that $B = (1, \dots, s)$. 
    Recall that $G_{\mathbf{i}^{(b_1)}}, \dots, G_{\mathbf{i}^{(b_s)}}$ must be oriented cycles on a same vertex set. 
    Let $q \geqslant 1$ be the number of vertices in the cycle. 
    Note that for such tuples, as $s \geqslant 2$,
    \begin{equation}
        \E \left[ \prod_{r=1}^s \overset{\circ}{Y_{\mathbf{i}^{(b_r)}}} \right] 
        = \E \left[ \prod_{r=1}^s Y_{\mathbf{i}^{(b_r)}} \right] (1+ o(1))
    \end{equation}
    since any term having at least one $s_r = *$ will give an additional factor of order $O(n^{-q})$. 
    Then, $q | k_1, \dots, q | k_s$ so that there exist integers $\ell_1, \dots, \ell_s$ such that $k_j = q \ell_j$. Between two consecutive vertices of the cycle, there are $\sum_{j=1}^s \ell_j$ edges, which must be even for the expectation to be nonzero. Therefore, $k_B = \sum_{j\in B} k_j$ must be even. 
    For such a $q | \gcd(k_1, \dots, k_s)$, using Lemma~\ref{lem:allmoments} we have 
    \begin{equation}
        \sum_{\mathbf{i}^{(b_1)}, \dots, \mathbf{i}^{(b_s)}}  \E \left[ \prod_{r=1}^s Y_{\mathbf{i}^{(b_r)}} \right] = \sum_{q | \gcd(k_1, \dots, k_s)} 
        q^{s-1}\left(c_{k_B/(2q)}^{(\alpha)} \right)^q + o(1) \,,
    \end{equation}
    where the factor $q^{s-1}$ comes from the $q$ choices of the origin vertex for each of the paths $\mathbf{i}^{(b_2)}, \dots, \mathbf{i}^{(b_s)}$ once $\mathbf{i}^{(b_1)} $ is chosen. Therefore, 
    \begin{equation}
    \label{eq:to_identify}
        \E \left[ \prod_{r=1}^\ell a_{k_r, n} \right]
        \to \sum_{\pi \in \mathcal{P}_{\ell,2}} \prod_{B \in \pi} \left(  
     \sum_{q | \gcd(k_1, \dots, k_s)} 
         q^{s-1}\left(c_{k_B/(2q)}^{(\alpha)} \right)^q \right)  
         \,, \qquad \nto\,,
    \end{equation}
    which completes the proof. 
\end{proof}

\begin{example}{\em
    Let us consider $\ell = 4$ and $(k_1, k_2, k_3, k_4) = (2, 2, 2, 2)$. Then, $\gcd(k_1, \dots, k_s) = 2$ so that the innermost sum is over $q \in \{1, 2\}$. The partitions of $\{1, 2, 3, 4 \}$ contributing to \eqref{eq:to_identify} are 
    \begin{itemize}
        \item $\pi = \{1, 2, 3, 4 \}$ having only one block. Writing $c_k$ instead of $c_k^{(\alpha)}$, we get a contribution of $c_4 + 2^3c_2^2$ for $q=1$ and $q=2$ with $s=4$ respectively.
        \item Three partitions of the form $\pi = \{x, y \} \cup \left(\{1, 2, 3, 4 \} \setminus \{ x, y\}\right)$ giving the same contribution. For each one, the product over blocks is the block of size two squared as there are only two blocks of size two. A block of size two gives a sum over $q \in \{1, 2\}$ and $k_B = 4$ so that the total contribution is $3(c_2 + 2c_1^2)^2$.
    \end{itemize}
    Thus the limit is 
    \begin{equation*}
        c_4 + 11c_2^2 + 12c_1^4 + 12c_2c_1^2 \,.
    \end{equation*}
    }
\end{example}

\subsubsection{Proving Proposition \ref{prop:moment_convergence}}
\label{subsec:proof_prop_random_limit}

\noindent
Before turning to the proof of Proposition \ref{prop:moment_convergence}, let us recall two lemmas on cumulants of random variables. 
\begin{lemma}[Moment-cumulant formula {\cite[Proposition~3.2.1]{PeccatiTaqqu2011}}]
    Let $(Z_k)_{k \geqslant 1}$ be a family of random variables. Then it holds for any positive integers $k_1,\ldots,k_{\ell}$ that
    \begin{equation}
        \label{eq:moment_cumulant_formula}
        \E \left[\prod_{r=1}^\ell Z_{k_r} \right] = \sum_{\pi \in \mathcal{P}_\ell} \prod_{B \in \pi} \kappa_B(Z_{k_1}, \dots, Z_{k_\ell}) \,,
    \end{equation}
    where $\mathcal{P}_{\ell}$ denotes the set of all partitions of $\{1,\ldots,\ell\}$ and where for $B=\{r_1,\ldots,r_s\}\subseteq\{1,\ldots,\ell\}$,
    \begin{equation*}
        \kappa_B(Z_{k_1}, \dots, Z_{k_\ell}) := \kappa_s(Z_{k_{r_1}}, \dots, Z_{k_{r_s}})
    \end{equation*}
    denotes the $s$-th joint cumulant of 
    $Z_{k_{r_1}}, \dots, Z_{k_{r_s}}$.
\end{lemma}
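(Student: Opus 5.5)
We treat the variables as distinct: set $X_r := Z_{k_r}$ for $r\in[\ell]$, so repeated indices among the $k_r$ cause no trouble, and assume $\E|X_r|^\ell<\infty$ (which is all that is needed for either side to make sense). The plan is to define the joint cumulants through a generating function and then extract one square-free coefficient. For indeterminates $t_1,\dots,t_\ell$ let
\begin{equation*}
M(t)=\E\Big[\exp\Big(\sum_{r=1}^\ell t_rX_r\Big)\Big]
\end{equation*}
be understood as a formal power series in $t$. By monomial we mean a monomial in $t_1,\dots,t_\ell$, and we call it square-free if each $t_r$ occurs with exponent at most one. We work modulo the ideal generated by $t_1^2,\dots,t_\ell^2$. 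In that quotient only square-free monomials survive, so
\begin{equation*}
M(t)\equiv\sum_{A\subseteq[\ell]}\E\Big[\prod_{r\in A}X_r\Big]\,t^A,\qquad t^A:=\prod_{r\in A}t_r,
\end{equation*}
and only moments of order at most $\ell$ are involved; no analyticity of the moment generating function is needed. We define $\log M$ as a formal series, which is possible since the constant term of $M$ is $1$. For a subset $B=\{r_1,\dots,r_s\}$ we define $\kappa_B$ as the coefficient of $t^B$ in $\log M(t)$. This is the standard definition of the joint cumulant $\kappa_s(X_{r_1},\dots,X_{r_s})$, and it coincides with the usual one because setting $t_r=0$ for $r\notin B$ leaves that coefficient unchanged.

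Next we exponentiate. Modulo $(t_1^2,\dots,t_\ell^2)$ we have $\log M\equiv\sum_{\emptyset\ne B}\kappa_B t^B$, hence
\begin{equation*}
M=\exp(\log M)\equiv\sum_{m\ge0}\frac1{m!}\Big(\sum_{\emptyset\ne B\subseteq[\ell]}\kappa_Bt^B\Big)^m .
\end{equation*}
We then read off the coefficient of $t_1\cdots t_\ell$ on both sides. On the left it is $\E[\prod_r X_r]$. On the right, a product $t^{B_1}\cdots t^{B_m}$ equals $t_1\cdots t_\ell$ exactly when $B_1,\dots,B_m$ are pairwise disjoint and cover $[\ell]$. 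Each unordered partition $\{B_1,\dots,B_m\}$ appears in $m!$ orders, and this cancels the factor $1/m!$. The result is $\sum_{\pi\in\mathcal P_\ell}\prod_{B\in\pi}\kappa_B$, which is \eqref{eq:moment_cumulant_formula}.

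The main obstacle is bookkeeping rather than analysis. One has to make sure that the formal logarithm and exponential are legitimate; they are, since $M$ has constant term $1$ and all series are polynomials modulo the ideal. One also has to check that this $\kappa_B$ agrees with the joint cumulant used elsewhere, for instance in \eqref{eq:E_Z_k_as_tilde_c}. An alternative route avoids generating functions altogether. Define cumulants recursively by \eqref{eq:moment_cumulant_formula} itself, which is triangular in $|B|$ because the one-block partition contributes $\kappa_{[\ell]}$ and all other partitions only involve smaller blocks. Then verify by Möbius inversion on the partition lattice that these coincide with the log-coefficients. Either way, the combinatorial heart of the argument is the $m!$ cancellation above.
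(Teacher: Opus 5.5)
Your proof is correct. The paper gives no argument of its own here; it quotes Peccati--Taqqu. There, joint cumulants are defined as $(-i)^{s}$ times mixed derivatives at the origin of the logarithm of the joint characteristic function $g$. The identity is then obtained (following Leonov and Shiryaev) together with its inverse $\kappa_B=\sum_{\sigma\in\mathcal P(B)}(-1)^{|\sigma|-1}(|\sigma|-1)!\prod_{A\in\sigma}\E[\prod_{r\in A}X_r]$, and the two directions are related by M\"obius inversion on the lattice of set partitions. That is essentially the alternative you sketch at the end.

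Your main route instead extracts a single coefficient in the truncated algebra $\mathbb{R}[t_1,\dots,t_\ell]/(t_1^2,\dots,t_\ell^2)$, where $M=\E\prod_r(1+t_rX_r)$. The ideal generated by the $t_r$ is nilpotent, so $\log$ and $\exp$ are finite, mutually inverse sums. The relation $t_r^2=0$ is exactly what forces the blocks to be disjoint and produces the $m!$ cancellation. This gives a short, self-contained proof with no M\"obius function, under the natural hypothesis $\E|Z_{k_r}|^\ell<\infty$, which the paper leaves implicit. Repeated indices $k_r$ are handled for free. Expanding $\log(1+(M-1))$ in the same algebra also yields the inverse formula above.

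Two small presentational points:
\begin{itemize}
\item Define $M$ directly in the quotient rather than first as a full formal power series, since the latter presupposes moments of all orders.
\item Do not merely assert that the coefficient of $t^B$ in $\log M$ is the usual cumulant. Note instead that both it and $(-i)^{|B|}\partial_B\log g(0)$ equal the partition sum for $\kappa_B$ written above: the first by expanding the logarithm, the second by the set-partition form of Fa\`a di Bruno's formula.
\end{itemize}
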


\noindent
Beware that \eqref{eq:moment_cumulant_formula} features partitions with blocks of size one, which do not appear in the limit expression \eqref{eq:to_identify}. The centering 
$\tilde{Z} = Z - \E[Z]$ precisely removes these blocks:
\begin{equation*}
    \kappa_s \left(\tilde{Z}_{k_1}, \dots, \tilde{Z}_{k_s} \right) = 
    \begin{cases}
         \kappa_s(Z_{k_1}, \dots, Z_{k_s}) &\text{ if } s \geqslant 2 \\
         0 &\text{ if } s = 1.
    \end{cases}
\end{equation*}
Therefore, we have 
    \begin{equation}
        \label{eq:moment_cumulant_formula2}
        \E \left[\prod_{r=1}^\ell \tilde{Z}_{k_r} \right] = \sum_{\pi \in \mathcal{P}_\ell} \prod_{B \in \pi} \kappa_B \left(\tilde{Z}_{k_1}, \dots, \tilde{Z}_{k_\ell} \right)  = \sum_{\pi \in \mathcal{P}_{\ell,2}} \prod_{B \in \pi} \kappa_B(Z_{k_1}, \dots, Z_{k_\ell}) \,.
    \end{equation}

\begin{lemma}[Cumulants for division family]
\label{lem:cumulants_division}
Let $(V_{\ell, m})_{\ell, m \geqslant 1}$ be a family of random variables such that the families $\left( V_{\ell, \cdot} \right)_\ell$ 
are independent.
Let us define the family $(Z_k)_{k \geqslant 1}$ by $Z_k = \sum_{\ell | k} V_{\ell, k/\ell}$. Then one has
    \begin{equation}
        \label{eq:cumulants_Z}
        \kappa_s(Z_{k_1}, \dots, Z_{k_s}) = \sum_{q | \gcd(k_1, \dots, k_s)} \kappa_s(V_{q, k_1/q}, \dots, V_{q, k_s/q}) \,.
    \end{equation}
\end{lemma}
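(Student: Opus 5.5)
The plan is to rest everything on two standard properties of joint cumulants: multilinearity, and the vanishing of mixed cumulants for independent families.

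First, write each $Z_{k_r}$ as a sum over divisors,
\begin{equation*}
Z_{k_r}=\sum_{\ell_r\mid k_r} V_{\ell_r,k_r/\ell_r}.
\end{equation*}
Multilinearity of $\kappa_s$ in each argument then gives
\begin{equation*}
\kappa_s(Z_{k_1},\dots,Z_{k_s})=\sum_{\ell_1\mid k_1}\cdots\sum_{\ell_s\mid k_s}\kappa_s\bigl(V_{\ell_1,k_1/\ell_1},\dots,V_{\ell_s,k_s/\ell_s}\bigr).
\end{equation*}
This step is legitimate whenever the relevant joint moments are finite, and in our application all moments of the $V_{q,m}$ are finite. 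That is the framework in which the lemma is used, so I would state this as a standing assumption.

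Second, I would show that every term with $\ell_1,\dots,\ell_s$ not all equal vanishes. Suppose some index, say $\ell_1$, differs from another. Partition the arguments into two nonempty groups: those with $\ell_r=\ell_1$, and the rest. The families $(V_{\ell,\cdot})_\ell$ are independent, so these two groups of random vectors are independent of each other. A joint cumulant of random variables that split into two mutually independent nonempty subcollections is zero; this is the standard property, e.g.\ from \cite{PeccatiTaqqu2011}. It can be seen from the log of the joint characteristic function (or moment generating function), which becomes a sum of a function of the first group's variables and a function of the second group's variables. The mixed derivative $\partial_{t_1}\cdots\partial_{t_s}$ at $0$ therefore vanishes. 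Hence only diagonal terms $\ell_1=\cdots=\ell_s=q$ survive.

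Finally, a diagonal term is present exactly when $q$ divides every $k_r$, i.e.\ $q\mid\gcd(k_1,\dots,k_s)$. Collecting these terms gives \eqref{eq:cumulants_Z}.

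The only delicate point is justifying the vanishing of cumulants under independence when we work only with moments rather than characteristic functions. If needed, I would argue purely combinatorially. Use the moment–cumulant inversion $\kappa_s=\sum_\pi(-1)^{|\pi|-1}(|\pi|-1)!\prod_{B\in\pi}\E[\prod_{B}\cdot]$, together with the factorization of moments across independent groups, and show the alternating sum cancels. That is a classical fact, so I expect no real obstacle.
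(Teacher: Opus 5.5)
Your proposal is correct and follows the paper's argument: you expand $\kappa_s(Z_{k_1},\dots,Z_{k_s})$ by multilinearity over the finite divisor sums, and you discard every term with $\ell_1,\dots,\ell_s$ not all equal, since a joint cumulant vanishes when its arguments split into two mutually independent nonempty groups. The surviving diagonal terms $\ell_1=\cdots=\ell_s=q$ are exactly those with $q\mid\gcd(k_1,\dots,k_s)$, and the paper compresses all of this into a one-line appeal to ``multilinearity together with independence relations'', so your added fine print (finite moments, and mutual independence of the families $(V_{\ell,\cdot})_\ell$) is appropriate.
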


\begin{proof}
    The formula \eqref{eq:cumulants_Z} is obtained by the multilinearity of the function $\kappa_s$ together with independence relations.
\end{proof}

\begin{proof}[Proof of Proposition \ref{prop:moment_convergence}]
Let $\big(Z_k^{(\alpha)}\big)_{k \geqslant 1}$ be an $\alpha$-family as in Definition \ref{def:alpha_family}. In particular, recall from \eqref{eq:def_V_q_m} that $V_{q,m}^{(\alpha)}$ is a integral with respect to some Poisson process. We will use the fact that for a Poisson process $N$ with intensity $\nu$, we have the general cumulant formula 
\begin{equation}
    \label{eq:cumulant_Poisson_process}
    \kappa_s \left( \int f_1(\xi) N(\dint \xi), \dots, \int f_s(\xi) N(\dint \xi) \right) 
    = \int f_1(x) \dots f_s(x) \nu(\dint x) \,,
\end{equation}
which comes from the following Laplace functional expression, see (1.2) in \cite{Poisson_book},
\begin{equation}
    \label{eq:laplace_poisson}
    \E \left[ \exp \left( - \int f(\xi) N(\dint \xi) \right) \right] 
    = \exp \left( - \int \left(1 - e^{-f(x)} \right) \nu(\dint x) \right) \,.
\end{equation}
Applying \eqref{eq:cumulant_Poisson_process} with $m_1, \dots, m_s \geqslant 1$ and writing $m = m_1 + \cdots + m_s$, we obtain 
\begin{align}
    \kappa_s \left(V_{q, m_1}^{(\alpha)}, \dots, V_{q, m_s}^{(\alpha)} \right) &= q^s \int f_{m_1}(x) \cdots f_{m_s}(x) \nu_q^{(\alpha)}(\dint x) = 
    q^s \int f_{m}(x)  \nu_q^{(\alpha)}(\dint x)\\
    &= \begin{cases}
        q^{s-1} \int x^{m/2-1} \mu_q^{(\alpha)}(\dint x) = q^{s-1} \E \left[W_q^{m/2-1} \right] = q^{s-1} \big(c_{m/2}^{(\alpha)}\big)^q 
        &\text{ if } m \text{ is even} \\
         0 &\text{ if } m \text{ is odd.} \\
    \end{cases}  
\end{align}
Using Lemma \ref{lem:cumulants_division} and \eqref{eq:moment_cumulant_formula}, one can write the \rhs\ of \eqref{eq:moments_and_cgd} as
\begin{equation}
    \label{eq:identification}
    \sum_{\pi \in \mathcal{P}_{\ell,2}} \prod_{B \in \pi} \left(  
     \sum_{q | \gcd(k_j:j\in B)} 
         q^{|B|-1}\left(c_{k_B/(2q)}^{(\alpha)} \right)^q \right) = \sum_{\pi \in \mathcal{P}_\ell} \prod_{B \in \pi} \kappa_B \left(\tilde{Z}_{k_1}^{(\alpha)}, \dots, \tilde{Z}_{k_\ell}^{(\alpha)} \right)\,,
\end{equation}
where $\tilde{Z}_k^{(\alpha)} = Z_k^{(\alpha)} - \E\left[Z_k^{(\alpha)} \right]$. 
In conjunction with Lemma~\ref{lem:product_formula}, one therefore has the desired convergence of moments
\begin{equation}
    \E \left[ \prod_{r=1}^\ell a_{k_r, n} \right]
    \to 
    \E \left[ \prod_{r=1}^\ell \tilde{Z}_{k_r}^{(\alpha)} \right] \,, \qquad \nto\,,
\end{equation}
which finishes the proof of Proposition \ref{prop:moment_convergence}.
\end{proof}

\section{Limit cases : Poisson to Gaussian behavior}
\label{sec:limit_cases}

\noindent
The goal of this section is to show convergence results in the boundary cases $\alpha \in \{0, 2 \}$. 
We start with the case $\alpha=0$ in Section \ref{subsec:alpha_0}, where Theorem \ref{thm:poisson} and the results on one-hot matrices, that is, Lemma \ref{lem:conv_frobenius} and Proposition \ref{prop:spectral_measure_one_hot} are proved. Section \ref{subsec:gaussian_case} deals with the case $\alpha=2$ and proves Theorem~\ref{thm:gaussian}.

\subsection{Poisson behavior for $\alpha=0$}
\label{subsec:alpha_0}

\begin{proof}[Proof of Theorem~\ref{thm:poisson}]
    In the case where $\alpha=0$, one has $c_k^{(0)} = 1$ for every $k \geqslant 1$. For $\alpha = 0$, the $Beta(1-\alpha/2, \alpha) = \delta_1$ is the Dirac measure at $1$. Therefore, $\mu_q^{(0)} = \delta_1$ and consequently
    \begin{equation*}
        \nu_q^{(0)} = \frac{1}{2q} (\delta_{-1} + \delta_1) \,.
    \end{equation*}
    Thus, the corresponding $q$-Poisson process from Definition \ref{def:q_poisson_process} becomes
    \begin{equation*}
        N_q^{(0)} = \delta_{N_{q}^-} + \delta_{ N_{q}^+}\,,
    \end{equation*}
    where $\big(N_{\ell}^-\big)_{\ell\ge 1}$ and $\big(N_{\ell}^+\big)_{\ell\ge 1}$ are two i.i.d.\ sequences of independent random variables  with $N_{\ell}^-$ follwoing a Poisson $\mathcal{P}\big(\frac{1}{2\ell}\big)$ distribution. Thus, the random variables $V_{q, m}^{(0)}$ from \eqref{eq:def_V_q_m} are 
    \begin{equation*}
        V_{q, m}^{(0)} = q (f_m(-1) N_q^- + f_m(1)N_q^+) = q (N_q^+ + (-1)^m N_q^-) \,,
    \end{equation*}
    so that 
    \begin{equation*}
        Z_k^{(0)} = \sum_{q | k} q (N_{q}^+ + (-1)^{k/q} N_{q}^-) \,, \qquad k\ge 1\,.
    \end{equation*}
     Its expectation is 
    \begin{equation*}
        \E \left[Z_k^{(0)} \right] = \frac{1}{2} \sum_{q | k} (1 + (-1)^{k/q}) =: \tilde{c}_k^{(0)} 
        = \begin{cases}
            0 &\text{ if } k \text{ odd} \\
            \sum_{q | k/2} 1 &\text{ if } k \text{ even}.
        \end{cases} 
    \end{equation*}
    Note that this definition of $\tilde{c}_k^{(0)}$ is consistent with \eqref{eq:def_tilde_ck}. 
    Using the moment-cumulant formula \eqref{eq:moment_cumulant_formula2} 
    gives for $k_1,\ldots,k_\ell\ge 1$
    \begin{equation*}
        \E \left[ \prod_{r=1}^\ell a_{k_r, n} \right]
        \to \E \left[ \prod_{r=1}^\ell \tilde{Z}_{k_r}^{(0)} \right] \,, \qquad \nto\,.
    \end{equation*}
    Therefore, one has the convergence in distribution 
    \begin{equation}
        \left(\Tr \left[ \Y^{k_1}_n \right], \dots, \Tr \left[ \Y^{k_\ell}_n \right] \right) \cid \left(\tilde{Z}_{k_1}^{(0)}, \dots, \tilde{Z}_{k_\ell}^{(0)} \right) + \left(\tilde{c}_{k_1}^{(0)}, \dots, \tilde{c}_{k_\ell}^{(0)} \right) \,, \qquad \nto\,,
    \end{equation} 
    and since $\E \left[Z_k^{(0)} \right] = \tilde{c}_k^{(0)}$,
        \begin{equation}
        \left(\Tr \left[ \Y^{k_1}_n \right], \dots, \Tr \left[ \Y^{k_\ell}_n \right] \right) \cid \left(Z_{k_1}^{(0)}, \dots, Z_{k_\ell}^{(0)} \right) \,,\qquad \nto\,.
    \end{equation}
    Since all arguments in the proof of Proposition~\ref{thm:tightness} remain valid, tightness is again guaranteed by Proposition~\ref{thm:tightness}. Therefore, we have verified the conditions of Lemma \ref{lem:conditions_a_b} which now yields the following convergence in distribution  of characteristic polynomial $p_n$ for the topology of local uniform convergence in $\D$:
    \begin{equation*}
        p_n \cid F_0 \,, \qquad \nto\,,
    \end{equation*}
    where $F_0$ is the random holomorphic function 
    \begin{equation*}
        F_0(z) = \exp \left( - \sum_{k \geqslant 1} \frac{z^k}{k} Z_k^{(0)} \right) .
    \end{equation*}
\end{proof}

\begin{remark}[Product expression and zeroes]
    Using the product expression from  \eqref{eq:product_expression} in the case where $\alpha=0$ so that $c_k^{(0)} = 1$ for every $k \geq 1$, together with Lemma \ref{lem:zeroes_convergence} yield that the zeroes of $p_n$ converge to the ones of $\prod_{l \geq 1} (1 - z^{2\ell})^{1/2\ell}$, which is non-vanishing in $\D$. The zeros of the latter are located on the unit circle, which match the rightmost picture in Figure \ref{fig:eigenvalue-clouds}. To derive the convergence, one should extend the domain of convergence in Theorem \ref{thm:poisson} to a disk of convergence radius greater to one.
\end{remark}

\begin{proof}[Proof of Lemma~\ref{lem:conv_frobenius}]
We compare $\Y_n$ to the one-hot matrix $\Y_n^{(0)}$ with rows $\sign(Y_{ik_i}) \e_{k_i}$, $i\in [n]$. For the Frobenius norm $\|\cdot\|_F$, we get
	\begin{align*}
		\| \Y_n-\Y_n^{(0)} \|_F^2 &=  \sum_{i=1}^n \sum_{t=1}^n \big(Y_{it}-\sign(Y_{ik_i}) \1(t=k_i)\big)^2\\
		&=\sum_{i=1}^n \left( 1- Y_{ik_i}^2 + (Y_{ik_i}-\sign(Y_{ik_i}))^2\right)\\
		&= 2\Big( n-\sum_{i=1}^n |Y_{ik_i}|\Big).
	\end{align*}
	Using Markov's inequality, we deduce for $\vep>0$ that
	\begin{align*}
    \P\left( \| \Y_n-\Y_n^{(0)} \|_F >\vep \right)
    &\le \frac{2}{\vep^2} \sum_{i=1}^n \Big(1-\E\big[|Y_{ik_i}|\big]\Big)\to 0\,, \qquad \nto,
	\end{align*}
    where the last step follows by the assumption \eqref{eq:assumption_L} on $L$. 
\end{proof}

The rest of this section concerns the proof of Proposition~\ref{prop:spectral_measure_one_hot} on one-hot matrices which have the same distribution as $\Y_n^{(0)}$. Equivalently, each row has a $\pm 1$ in only one random column index which is uniform on $\{1, \dots, n\}$ and zeros elsewhere, and where rows are independent.
In the following, let us denote by $\{-1, 1 \} \wr \mathfrak{S}_n$ the set of permutation matrices of size $n \geqslant 1$ where non-zero entries can take values in $\{-1, 1 \}$. We refer to these matrices as \textit{signed permutation matrices.}

\begin{lemma}[Permutation-nilpotent representation]\label{lem:permutation-nilpotent} 
    Let $\Y_n^{(0)}$ be a one-hot matrix. Then, up to conjugation by permutation matrices, $\Y_n^{(0)}$ admits the decomposition in upper triangular form  
    \begin{equation*}
        \Y_n^{(0)} = \begin{pmatrix}
            N & * \\
            0 & P
        \end{pmatrix}\,,
    \end{equation*}
    where $N$ is a nilpotent matrix and $P \in \{-1, 1 \} \wr \mathfrak{S}_r$ is a signed permutation matrix  of size $1 \leqslant r \leqslant n$.
\end{lemma}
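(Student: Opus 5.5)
We view a one-hot matrix as a functional digraph. Each row $i$ has exactly one nonzero entry, $\pm1$ in column $k_i$. So $\Y_n^{(0)} = D\,M_\phi$, where $\phi:[n]\to[n]$, $\phi(i)=k_i$, $M_\phi$ is the $0/1$ matrix with $(M_\phi)_{i,\phi(i)}=1$, and $D$ is the diagonal sign matrix. The directed graph $i\to\phi(i)$ has out-degree one at every vertex. Hence it is a disjoint union of components, each consisting of a single directed cycle with trees hanging into it.

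Let $C=\bigcup_{m\ge0}\phi^m([n])=\bigcap_{m}\phi^m([n])$ be the set of cyclic vertices. This set is nonempty, since iterating $\phi$ from any vertex eventually revisits a vertex; so $r:=|C|\ge1$. The map $\phi$ restricted to $C$ is a bijection of $C$. Let $T=[n]\setminus C$ be the transient vertices. We order the indices so that $T$ comes first and $C$ comes last. This reordering is conjugation by a permutation matrix.

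We then check the block structure, writing $\Y_n^{(0)}=\begin{pmatrix} A & B\\ E & P\end{pmatrix}$ with respect to $(T,C)$.

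\begin{itemize}
\item \textbf{Block $E=0$.} A row $i\in C$ has its unique nonzero entry in column $\phi(i)\in C$.
\item \textbf{Block $P$.} Since $\phi|_C$ is a permutation of $C$, the block $P$ is $D_C M_{\phi|_C}$. This is a permutation matrix with signs $\pm1$, i.e.\ $P\in\{-1,1\}\wr\mathfrak S_r$.
\item \textbf{Block $N:=A$.} It is the restriction to $T$, which records the edges $i\to\phi(i)$ with both endpoints in $T$.
\end{itemize}

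The only step needing care is showing that $N$ is nilpotent. For $i\in T$, define the height $h(i)=\min\{m\ge1:\phi^m(i)\in C\}$, which is finite and at most $n$. Any edge $i\to\phi(i)$ inside $T$ satisfies $h(\phi(i))=h(i)-1$.

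It follows that $N^m_{ij}$ is a signed count of $T$-internal paths of length $m$ from $i$ to $j$. Such paths exist only if $m\le h(i)-1$. Therefore $N^{n}=0$. Equivalently, ordering $T$ by decreasing height makes $N$ strictly upper triangular, and this ordering can be folded into the same conjugating permutation.

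The case $T=\emptyset$ (so $r=n$) is allowed, with $N$ the empty block. This gives the stated decomposition.
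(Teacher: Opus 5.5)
Your proof is correct and follows essentially the same route as the paper. Both arguments view the one-hot matrix as a functional digraph and put the cycle vertices last, which yields the signed-permutation block $P$ (with the zero block below $N$, because cycle rows point into cycle columns). Both then order the tree vertices by their distance to the cycle so that $N$ is strictly upper triangular; your height function makes the paper's informal ``starting from the cycle'' ordering precise.

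One small slip: since $\phi^0([n])=[n]$, the union $\bigcup_{m\ge0}\phi^m([n])$ is all of $[n]$. The set $C$ should therefore be defined by the decreasing, eventually stationary intersection $\bigcap_{m}\phi^m([n])$ alone.
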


\begin{proof}
    Let $\Y_n^{(0)}$ be a one-hot matrix of size $n \geqslant 1$. Let us consider the associated directed graph $G = G(\Y_n^{(0)})$ with vertices $V = [n]$ and having directed edges $E = \{ (i, U_i) \}_{1 \leqslant i \leqslant n}$, where $U_i$ is the column index of the non-zero entry in the row $i$. Note 
      that each vertex has outer degree exactly one. Therefore, each connected component of $G$ consists of a directed cycle of length $q \geqslant 1$, together with some directed trees connected to vertices of the cycle, see Figure \ref{fig:component_one_hot}.
      \begin{figure}
          \centering
        \includegraphics[width=0.3\linewidth]{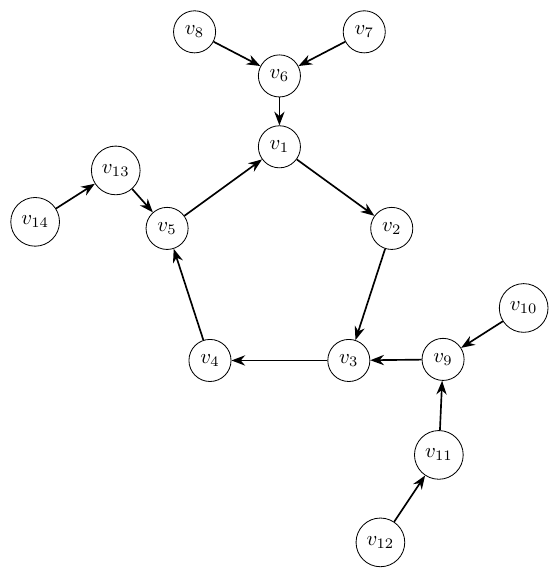}
          \caption{Connected components of the graph induced by a one-hot matrix.}
          \label{fig:component_one_hot}
      \end{figure}
      Ordering the vertices of the cycles with $\{n, n-1, \dots, n-r+1 \}$ yields a block matrix of the form 
    \begin{equation*}
            \begin{pmatrix}
            0 & V_1 & 0 & \cdots & 0\\
            0 & 0 & V_2 & \cdots & 0\\
            \vdots & & & \ddots & \vdots\\
            0 & 0 & 0 & \cdots & V_{q-1}\\
            V_q & 0 & 0 & \cdots & 0
        \end{pmatrix}
    \end{equation*}
    on the bottom right of $\Y_n^{(0)}$.
    We then order the vertices which are in the trees but not in the cycle of the connected component, starting from the cycle, so that the rows of the corresponding vertices are strictly upper triangular. Repeating this operation on every connected component yields the desired decomposition.
\end{proof}

\begin{corollary}[Spectral radius of a one-hot matrix] \label{cor:spectralradius_onehot}
    A one-hot matrix has always an eigenvalue on the unit circle. Therefore, its spectral radius is $1$.
\end{corollary}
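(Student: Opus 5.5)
The plan is to read the statement off directly from Lemma~\ref{lem:permutation-nilpotent}. After conjugating by a permutation matrix, which changes neither the spectrum nor the spectral radius, $\Y_n^{(0)}$ becomes block upper triangular,
\[
\begin{pmatrix} N & * \\ 0 & P\end{pmatrix}.
\]
Here $N$ is nilpotent and $P\in\{-1,1\}\wr\mathfrak S_r$ with $r\ge 1$. Hence the spectrum of $\Y_n^{(0)}$ is the union of the spectrum of $N$, which is $\{0\}$, and the spectrum of $P$.

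The key point is that $r\ge1$. This holds because the associated functional graph, in which every vertex has out-degree exactly one, must contain at least one directed cycle: following outgoing edges from any vertex must eventually revisit a vertex, since $[n]$ is finite. So $P$ is a genuine nonempty signed permutation matrix.

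Next I verify that every eigenvalue of $P$ has modulus one. One route is to note that $P$ is orthogonal: its rows are $\pm \e_{\sigma(i)}$ for a permutation $\sigma$, so $PP^\top=\bfI_r$, and all its eigenvalues therefore lie on the unit circle. A more explicit route works cycle by cycle. A block corresponding to a cycle of length $q$ whose signs multiply to $\epsilon\in\{\pm1\}$ has characteristic polynomial $\lambda^q-\epsilon$. Its roots are the $q$-th roots of $\epsilon$, all of modulus one. Either way, $\Y_n^{(0)}$ has at least one eigenvalue on the unit circle, and all of its eigenvalues lie in $\{0\}\cup\{|\lambda|=1\}$. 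Therefore $\rho(\Y_n^{(0)})=1$.

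There is no real obstacle here. The only care needed is to justify $r\ge1$, which is the existence of a cycle in the functional graph, and to note that conjugation by permutation matrices preserves the spectrum.
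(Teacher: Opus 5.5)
Your proposal is correct and follows the paper's proof. Like the paper, you read off the spectrum from the block triangular decomposition of Lemma~\ref{lem:permutation-nilpotent}, so that the nonzero eigenvalues come only from the signed permutation block $P$ of size $r\ge 1$; you also make explicit why $r\ge 1$ (a functional graph on $[n]$ has a cycle) and why every eigenvalue of $P$ is unimodular (orthogonality of $P$), and the latter is what upgrades ``an eigenvalue on the unit circle'' to $\rho=1$.
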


\begin{proof}
    Using the decomposition given by Lemma \ref{lem:permutation-nilpotent}, the nonzero eigenvalues may only come from the signed permutation matrix $P$. Since $r \geqslant 1$, $P$ has at least one cycle giving at least one eigenvalue on the unit circle.
\end{proof}

\begin{proposition}[Convergence of traces for one-hot matrices]
  Let $\ell \geqslant 1$ and $k_1, \dots, k_{\ell} \geqslant 1$. Then we have the convergence in 
    \begin{equation}
    \label{eq:conv_traces_one_hot}
    \left(\Tr \left[ (\Y_n^{(0)})^{k_1} \right], \dots, \Tr \left[ (\Y_n^{(0)})^{k_\ell} \right] \right) \cid \left(Z_{k_1}^{(0)}, \dots, Z_{k_\ell}^{(0)} \right) \,, \qquad \nto\,.
    \end{equation}
\end{proposition}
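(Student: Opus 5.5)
The plan is to prove \eqref{eq:conv_traces_one_hot} by running the same method of moments as for Theorem~\ref{thm:conv_traces}. Instead of going through the graph expansion, we will identify the traces combinatorially. Write $\Y_n^{(0)}$ as a random functional graph: vertex $i$ has one outgoing edge $(i,U_i)$ with a sign $\epsilon_i\in\{\pm1\}$. The $U_i$ are i.i.d.\ uniform on $[n]$, the $\epsilon_i$ are i.i.d.\ symmetric signs, and the signs are independent of the $U_i$. By Lemma~\ref{lem:permutation-nilpotent}, the nilpotent part contributes nothing to traces, so
\[
\Tr\big[(\Y_n^{(0)})^k\big]=\sum_{q\mid k} q\,\big(C_q^{+}+(-1)^{k/q}C_q^{-}\big).
\]
Here $C_q^{\pm}$ is the number of cycles of length $q$ in the functional graph whose sign product $\prod_{i\in\text{cycle}}\epsilon_i$ equals $\pm1$. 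Indeed, a cycle of length $q$ with sign product $s$ is, up to diagonal $\pm1$ conjugation, $s$ times a $q$-cycle permutation matrix on the last coordinate. Its $k$-th power has trace $q\,s^{k/q}$ when $q\mid k$ and $0$ otherwise. Comparing with the formula for $Z_k^{(0)}$ in the proof of Theorem~\ref{thm:poisson}, it therefore suffices to show the joint convergence
\[
(C_q^{+},C_q^{-})_{q\le K}\cid (N_q^{+},N_q^{-})_{q\le K}
\]
for every fixed $K$, where the limits are independent Poisson variables with mean $1/(2q)$. The continuous mapping theorem then gives \eqref{eq:conv_traces_one_hot}.

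For this joint Poisson convergence we will use factorial moments. Fix nonnegative integers $a_q^{\pm}$, $q\le K$. The joint factorial moment $\E\prod_q (C_q^+)_{a_q^+}(C_q^-)_{a_q^-}$ counts ordered tuples of distinct cycles with the prescribed lengths and signs. Such a tuple is made of $a=\sum_q(a_q^++a_q^-)$ cycles, and two distinct cycles of a functional graph are automatically vertex-disjoint.

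The number of ways to choose disjoint vertex sets with cyclic orders is
\[
\frac{n!}{(n-\sum_q q a_q)!}\prod_q q^{-(a_q^++a_q^-)}.
\]
Each cycle is present with probability $n^{-q}$, using the independence of the $U_i$. Each sign constraint holds with probability $1/2$, independently, since the vertex sets are disjoint and the signs are independent of the $U_i$. Hence the factorial moment equals
\[
\prod_q (2q)^{-(a_q^++a_q^-)}\,(1+O(n^{-1})),
\]
which converges to the factorial moments of independent Poisson$(1/(2q))$ variables. Poisson laws are determined by their moments, so the joint convergence follows.

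The main obstacle is only bookkeeping. One must check that the trace identity is exact, meaning that tree vertices never lie on closed walks; this is clear, because a closed walk in a functional graph stays on a cycle. One must also check that conditioning on presence of disjoint cycles leaves the signs fair coins. Alternatively, one could argue as in Lemma~\ref{lem:product_formula} with all $c_k=1$, since the entry moments of $\Y_n^{(0)}$ satisfy $n\beta_{2k}=1$ exactly and mixed moments with $r\ge2$ vanish. That route gives the same limiting moments $\E\big[\prod_r\tilde Z^{(0)}_{k_r}\big]$, and the mean convergence to $\tilde c^{(0)}_k$ follows from Lemma~\ref{lemma:mean_convergence}.
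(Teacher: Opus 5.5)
Your argument is correct, but your main route differs from the paper's. The paper's proof is essentially one line. It notes that the one-hot entries satisfy $n\bar{\beta}_{2k}=1=c_k^{(0)}$ exactly, and that every mixed moment $\bar{\beta}_{2k_1,\ldots,2k_r}$ with $r\ge 2$ vanishes. It then reruns the path expansion of Lemma~\ref{lemma:mean_convergence} and Lemma~\ref{lem:product_formula}, together with the cumulant identification of the $\alpha=0$ Poisson family. This is exactly the alternative you mention at the end.

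You instead use the functional-graph structure to get the exact identity $\Tr[(\Y_n^{(0)})^k]=\sum_{q\mid k}q\,(C_q^{+}+(-1)^{k/q}C_q^{-})$. This reduces the statement to the joint Poisson limit of signed short-cycle counts of a uniform random mapping. You prove that limit through the exact factorial-moment formula $\frac{n!}{(n-m)!\,n^{m}}\prod_q (2q)^{-(a_q^{+}+a_q^{-})}$, with $m=\sum_q q(a_q^{+}+a_q^{-})$.

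What your route buys:
\begin{itemize}
\item it is self-contained, with no cumulant bookkeeping and no moment-determinacy of the $V_{q,m}$ beyond that of Poisson laws;
\item it gives an explicit $O(1/n)$ error;
\item it explains the limit: $N_q^{\pm}$ are literally the limits of the numbers of $q$-cycles with positive/negative sign product, as in the permutation case of \cite{Coste_Lambert_Zhu}.
\end{itemize}
The paper's route buys brevity. It also makes visible why the one-hot model and the self-normalized model at $\alpha=0$ have the same limit: they share the same asymptotic entry moments.

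One wording fix. A $q$-cycle with sign product $s$ is conjugate, by a diagonal $\pm1$ matrix, to the cyclic permutation matrix with a single entry replaced by $s$. It is not conjugate to $s$ times the cyclic permutation matrix, which would give trace $q\,s^{k}$. Your stated value $q\,s^{k/q}$ and your closed-walk argument are the correct ones.
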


\begin{proof}
We recall Lemma~\ref{lem:allmoments} and let 
$$\bar{\beta}_{2k_1,\ldots,2k_r}:= \E\Big[ \big(y_{11}^{(0)}\big)^{2k_1} \ldots \big(y_{1r}^{(0)}\big)^{2k_r}\Big]\,.$$
Since only one of the $y_{1j}^{(0)}$'s is nonzero, we get for $k\ge 1$ and $r\ge 2$ that
$$n\bar{\beta}_{2k}=1=c_k^{(0)} \quad \text{ and } \quad \bar{\beta}_{2k_1,\ldots,2k_r}=0\,. $$
Setting $\alpha=0$ in the proof of Theorem~\ref{thm:conv_traces} yields the desired result. 
\end{proof}

We are now ready to prove Proposition \ref{prop:spectral_measure_one_hot}.
Using the nilpotent-permutation decomposition $(N_n, P_n)$ of $\Y^{(0)}_n$ from Lemma~\ref{lem:permutation-nilpotent}, one has that 
\begin{equation*}
    \mu_n^{(0)} = \frac{n-c_n}{n} \delta_0 + \frac{1}{n} \sum_{\lambda \in \operatorname{Spec}(P_n)} \delta_{\lambda}
\end{equation*}
where $c_n \ge 1$ is the size of the signed permutation matrix $P_n$ and where $\operatorname{Spec}(P_n)$ denotes the multiset of eigenvalues of $P_n$.
Since only cycle vertices contribute to the size of the signed permutation, using Theorem 2 of \cite{Flajolet_Odlyzko} gives that $\E[c_n] \sim \sqrt{\tfrac{n\pi}{2}}$ as $\nto$. Let $f$ be a continuous and bounded function and let $\vep > 0$. 
Then, using Markov's inequality
\begin{align*}
    \Prob \left[ \left| \int f(x) {\mu_n^{(0)}(\dint x)} - \int f(x) \delta_0( \dint x) \right| \geqslant \vep \right] &= 
    \Prob \left[ \left| \frac{1}{n} \sum_{\lambda \in \operatorname{Spec}(P_n)} f(\lambda) - \tfrac{c_n}{n} f(0) \right| \geqslant \vep \right] \\ 
    & \leqslant \frac{\E [c_n]}{n\vep} ||f||_\infty + \frac{\E[c_n]}{n \vep} |f(0)| \to 0  \,, \qquad \nto\,,
\end{align*}
 which finishes the proof of Proposition \ref{prop:spectral_measure_one_hot}.

We remark that the nonzero eigenvalues of $\Y^{(0)}_n$ are thus outliers of the limiting spectral distribution which is concentrated at zero. However, for every fixed $n \geqslant 1$, one always has at least one eigenvalue lying on the unit circle.
    
\subsection{Domain of attraction of normal distribution: Gaussian behavior}
\label{subsec:gaussian_case}\begin{proof}[Proof of Theorem~\ref{thm:gaussian}]
    Let us assume that $X_{11}\in \DAN$. Then, 
    using Lemma~\ref{lem:allmoments}, one has 
    for every $k \geqslant 2$,
    \begin{equation}
        \label{eq:c_k_to_zero}
        c_k^{(2)} = \lim_{n \to \infty} n \beta_{2k} = 0 \,.
    \end{equation}
    Let us consider a partition $\pi \in \mathcal{P}_\ell$ having a block $B$
    of size $s \geqslant 3$. Then, for every $q \geqslant 1$ such that 
    $q | \gcd(k_j:j\in B)$, one has $\frac{k_B}{q} = \frac{\sum_{j\in B} k_j }{q} \geqslant |B| \geqslant 3$. Therefore, the block gives a vanishing contribution since $c_{k_B/{2q}}^{(2)}  = 0$.
    The right-hand side of \eqref{eq:moments_and_cgd} then becomes a sum over pairings
        \begin{equation}
        \sum_{\pi \in \mathcal{P}_\ell^{(2)}} \prod_{(u, v) \in \pi} \left(  
     \sum_{q | \gcd(k_u, k_v)} 
         q\, \big(c_{(k_u+k_v)/{2q}}^{(2)}\big)^q \right) \,.
    \end{equation}
    As a consequence, the limiting random variables $(\tilde{Z}_k)_{k \geqslant 1}$ are Gaussian. 
    For $k,\ell \ge 1$, we compute the covariances
    \begin{equation}
        \E \left[ \tilde{Z}_{k} \tilde{Z}_\ell \right] = \sum_{q | \gcd(k, \ell)} 
         q\, \big(c_{(k+\ell)/{2q}}^{(2)}\big)^q \,.
    \end{equation}
    If $k \neq \ell$, for any $q | \gcd(k, \ell)$, we have that $(k+\ell)/{q} \geqslant 3$ 
    so that $c_{(k+\ell)/{2q}}^{(2)} = 0$. Therefore, the variables $(\tilde{Z}_k)_{k \geqslant 1}$ are 
    independent. For $k = \ell$, only the term $q=k$ contributes so that $\E |\tilde{Z}_k|^2 = k$
    and therefore $\tilde{Z}_k \sim \mathcal{N}(0, k)$. 
    Since $c_k^{(2)} = 0$ for $k \geq 2$ and $c_1^{(2)}=1$, 
    the deterministic first-order term satisfies $\E\Tr(\Y_n^k)\to \1_{\{2\mid k\}}$. Thus, since the variables $\tilde Z_k$ below describe the centered fluctuations of the traces, one gets that 
    \begin{equation}
    \left(\Tr \left[ \Y^{k_1}_n \right], \dots, \Tr \left[ \Y^{k_\ell}_n \right] \right) \cid \left(\1_{\{2\mid k_1\}}+\sqrt{k_1}Z_{k_1}',\dots,\1_{\{2\mid k_\ell\}}+\sqrt{k_\ell}Z_{k_\ell}'\right) \,,\qquad \nto\,,
    \end{equation}
    where $(Z_k')_{k \geqslant 1}$ are 
    independent, standard Gaussians $\mathcal{N}(0, 1)$.
    Using the product expression \eqref{eq:product_expression} yields that the limiting characteristic polynomial is given by
    \begin{equation*}
    p_n(z) \cid  
    \sqrt{1 - z^{2}} \cdot \exp \left( - \sum_{k \geqslant 1} 
    \frac{z^k}{\sqrt{k}} Z_k' \right) \,.
    \end{equation*}
    \end{proof}

\bibliography{library}
\end{document}